\newcommand{\po}{\left(}
\newcommand{\pf}{\right)}
\newcommand{\co}{\left[}
\newcommand{\cf}{\right]}
\newcommand{\cco}{\llbracket}
\newcommand{\ccf}{\rrbracket}
\newcommand{\R}{\mathbb R}
\newcommand{\N}{\mathbb N} 
\newcommand{\dd}{\text{d}}
\newcommand{\na}{\nabla}
\newcommand{\1}{\mathbbm{1}} 
\newcommand{\pierre}[1]{\textcolor{blue}{#1}}
\newtheorem{thm}{Theorem}
\newtheorem{assu}{Assumption}
\newtheorem*{assu*}{Assumption}
\newtheorem{lem}[thm]{Lemma}
\newtheorem{defi}[thm]{Definition}
\newtheorem{cor}[thm]{Corollary}
\newtheorem{prop}[thm]{Proposition}
\newtheorem{rem}{Remark}
\title{Stochastic moments dynamics: a flexible finite-dimensional random perturbation of Wasserstein gradient descent}
\author{Pierre Germain, Pierre Monmarché}
\begin{document}
\maketitle

\begin{abstract}
For optimizing a non-convex function in finite dimension, a method is to add Brownian noise to a gradient descent, allowing for transitions between basins of attractions of different minimizers. To adapt this for optimization over a space of probability distributions requires a suitable noise. For this purpose, we introduce here a simple stochastic process where a number of moments of the distribution are following a chosen finite-dimensional diffusion process, generalizing some previous studies where the expectation of the measure is subject to a Brownian noise. The process may explode in finite time, for instance when trying to force the variance of a distribution to behave like a Brownian motion. We show, up to the possible explosion time, well-posedness and  propagation of chaos  for the system of mean-field interacting particles with common noise approximating the process.
\end{abstract}

\section{Introduction}

\subsection{Motivation}

 In finite dimension, in order to minimize a non-convex energy $U\in\mathcal C^1(\R^d)$, a possibility is to consider the overdamped Langevin diffusion,
 \begin{equation}
 \label{eq:overdamped}
\dd X_t = -\na U\po X_t\pf \dd t + \sqrt{2\varepsilon} \dd B_t
 \end{equation}
where $\varepsilon>0$ is a temperature parameter. It can be interpreted as a noisy perturbation of a gradient descent (to be distinguished from the stochastic gradient method where the noise comes from a random approximation of the gradient $\na U$). Thanks to the additional noise, the process exits from its 
initial energy well and visits all the space, while staying preferentially in low-energy regions, avoiding an exhaustive isotropic search in high dimension (provided $\varepsilon$ is small enough). More precisely, under mild assumptions, the process is ergodic with respect to the Gibbs measure with density proportional to $\exp(- \frac{1}{\varepsilon} U)$, which means that indeed the whole space will be visited in the long-time, while the time spent on average in a region is proportional to its probability with respect to the Gibbs measure, ensuring that most time is spent around minimizers. The simulated annealing algorithm is a variant where $\varepsilon$ depends on time and goes to zero. If it decays sufficiently slowly, the process converges in probability to the \emph{global} minimizers of $U$.

Our primary motivation is to adapt this in the context of Wasserstein gradient flows, for optimization over $\mathcal P(\R^d)$ the set of probability measures over $\R^d$.  Although this is our underlying objective, in fact the theory of Wasserstein gradient flow will not play any role in the rest of our study so that we simply refer the interested reader to \cite{ambrosio2005gradient} for theoretical background, and to \cite{Szpruch,yao2022mean,li2022sampling,chizat2018global,lambert2022variational} for a far from exhaustive sample of recent applications. To explain our motivation it is sufficient to consider the example of the granular media equation
\begin{equation}\label{eq:granularmedia}
\partial_t \rho_t = \sigma^2 \Delta \rho_t + \na\cdot \po \rho_t \po \na U + \rho_t\star \na W\pf\pf\,,
\end{equation}
where $\rho_t$ is a probability density over $\R^d$, $\na\cdot$ stands for the divergence operator, $U,W \in \mathcal C^1(\R^d)$ are respectively called the confining and interaction potentials and $\sigma>0$ is a diffusivity parameter. Under suitable conditions on $U$ and $W$, the trajectory $(\rho_t)_{t\geqslant 0}$ can be shown to be the gradient flow of a so-called free energy $\mathcal F$ defined for $\mu \in \mathcal P(\R^d)$ by $\mathcal F(\mu) =   +\infty$ if $\mu$ doesn't have a density with respect to the Lebesgue measure and otherwise, still writing $\mu$ for the density,

\begin{equation} \label{eq:energy}
    \mathcal F \po \mu \pf = \sigma^2 \int_{\R^d} \mu(x) \ln \mu(x) \dd x + \int_{\R^d} U(x)  \mu(x)\dd x  +\frac12 \int_{\R^d\times \R^d} W(x-y) \mu(x)\mu(y)\dd x \dd y\,. 
\end{equation}

Stationary solutions of \eqref{eq:granularmedia}  are then critical points of the free energy, and there may be several of them. This is for instance the case with $d=1$, the double-well confining potential $U(x) = \frac{x^4}{4} - \frac{x^2}{2}$ and an attracting quadratic interaction $W(x) = x^2/2$. Then, there exists a critical parameter $\sigma_*>0$ such that, for $\sigma\geq \sigma_*$, \eqref{eq:granularmedia} admits a unique stationary solution (which is the global minimizer of $\mathcal F$) while for $\sigma < \sigma_*$ there are three such stationary solutions (two of which being global minimizers, the third being an unstable saddle point), see \cite{Tugautdoublewell}. More generally, in $\R^d$ with $W(x)=|x|^2/2$, then for any non-degenerate local minimizer $x_*$ of $U$ which is the global minimizer of $x\mapsto U(x) + |x-x_*|^2/2$,  when $\sigma$ is sufficiently low, there exists a local minimizer of $\mathcal F$ which is close to a Dirac mass at $x_*$ \cite{tugaut2014self,MonmarcheReygner}. 

Facing such an optimization problem over $\mathcal P(\R^d)$ with several minimizers, by analogy with the finite-dimensional case, we wish to add some noise to \eqref{eq:granularmedia}.

In a practical implementation, \eqref{eq:granularmedia} is approximated by the empirical distribution $\pi(X):= \frac1N\sum_{i=1}^N \delta_{X_i}$ of a particle system $X=(X^1,\dots,X^N)$ solving

\begin{equation} \label{eq:mcKeanPS}
    \forall i\in\cco 1,N\ccf,\qquad \dd X_t^i = -\na U(X_t^i) \dd t - \frac1N \sum_{j=1}^N \na W(X_t^i - X_t^j) \dd t + \sqrt{2}\sigma\dd B_t^i\,,
\end{equation}
where $B^1,\dots,B^N$ are independent Brownian motions on $\R^d$. In other words $X$ is an overdamped Langevin process with energy $U_N(x) = \sum_{i=1}^N U(x_i) + \frac12\sum_{i,j=1}^N W(x_i-x_j)$. In particular it is ergodic and thus, along an arbitrarily long trajectory, its empirical distribution is going to get close at some points to each of the stationary solutions of \eqref{eq:granularmedia}. More precisely, in the double well potential in dimension 1 for instance, the law of $\pi(X)$ at equilibrium is approximately $\frac12\delta_{\rho_-} + \frac12\delta_{\rho_+}$ where $\rho_-,\rho_+$ are the two global minimizers of $\mathcal F$.  However, when $N$ is large, the process is metastable and the transitions between the two minimizers are exponentially large with $N$, and are thus not seen in practice \cite{Monmarchemetastable}. In some sense, in a first order approximation, the evolution of $\pi(X_t)$  is given by \eqref{eq:granularmedia} with an additional noise, but the intensity of this noise vanishes as $N\rightarrow \infty$ \cite{FERNANDEZ199733}. By contrast, we are interested in noise which doesn't vanish in this mean-field limit.

Random dynamics, and in particular diffusion processes, over spaces of probability measures have been considered for long for modeling purpose, as they naturally arise as the mean field limit of interacting agents or particles subject to a common environmental noise. More recently, apart from this modeling motivation, the question of  designing stochastic processes which are reversible with respect to some natural distribution over $\mathcal P(\R^d)$ have gained interest, see \cite{von2009entropic,Shao} and references within (having in mind the finite-dimensional case~\eqref{eq:overdamped}, notice that  defining a suitable notion of Gibbs measure over $\mathcal P(\R^d)$ associated to $\mathcal F$ is unclear in the absence of a Lebesgue measure). Another motivation for the construction of diffusion processes over $\mathcal P(\R^d)$ is to take advantage of the regularization properties of the noise to get well-posed equations, as in \cite{delarue2024rearranged}. Finally, closely related to the present study, the viewpoint in \cite{ANGELI2023127301,maillet2023note,delarue2024ergodicity} is that the noise can induce ergodicity (i.e. uniqueness of a stationary distribution) in cases where the deterministic flow has several stationary solutions.

The Wasserstein space being a metric space, an approach to define diffusion processes in this setting is to work within the theory of Dirichlet forms \cite{dello2022dirichlet,overbeck1995analytic,ren2024diffusion,von2009entropic,Shao,ren2024markov}. Another possibility is to work in a space of functions in bijection with (a subset of) $\mathcal P(\R^d)$, for instance repartition functions in one dimension \cite{delarue2024rearranged,delarue2024rearranged2,von2009entropic} or densities  in some Hilbert space with a trace-class Gaussian distribution as the reference measure  \cite{EberleBouRabee}. However, the viewpoint taken in the present paper is different, following the idea that the fact that the Gibbs measure is invariant for the overdamped Langevin diffusion is nice but not crucial from an optimization point of view. Basically, what we need is that the process spends most of its time in low-energy regions (which holds if the parameter $\varepsilon$ controlling the intensity of the noise is small), and some controllability property that allows for transitions between minimizers.

As in other cases of interest, in the granular media equation \eqref{eq:granularmedia}, typically, different critical points of $\mathcal F$ are not distinguished by fine microscopic informations (like regularity properties, high-frequency Fourier modes) but rather by large-scale features. For instance, when $W(x)=|x|^2$, they are characterized by their expectation (although their basins of attraction are not, cf. \cite[Proposition 14]{MonmarcheReygner}). In other words, to allow for transitions between minimizers, it would seem relevant to add random perturbation to the expectations of $\rho_t$ in \eqref{eq:granularmedia}. Creating a stochastic flow on $\mathcal P(\R^d)$ whose expectation is a Brownian motion is rather simple, taking $t\mapsto \mu \po \cdot + B_t\pf$. Using this elementary motion to perturb  the initial motion amounts to consider
\[\dd X_t^i = -\na U(X_t^i) \dd t - \frac1N \sum_{j=1}^N \na W(X_t^i - X_t^j) \dd t + \sqrt{2}\sigma\dd B_t^i + \sqrt{\varepsilon} \dd B_t\,,\]
where $B$ is a $d$-dimensional Brownian motion, common to all particles. This is precisely the process studied in~\cite{delarue2024ergodicity,maillet2023note}, where it is indeed shown that this finite-dimensional noise is sufficient to ensure the ergodicity of the process (implying in particular that transitions between neighborhoods of the minimizers of the free energy occur).

In more general situations, however, randomizing the expectation alone may not be enough. A natural extension is to fix a collection of observables $f_1,\dots,f_p:\R^d \rightarrow \R$ and to design a process $\mu_t$ over $\mathcal P(\R^d)$ so that $(\mu_t(f_1),\dots,\mu_t(f_p))$ follows a prescribed stochastic differential equation over $\R^p$. Following this idea, in the next section, we introduce a process that we call stochastic moment dynamics (SMD) and which is the main topic of this work.  This definition, the well-posednes analysis of the process (cf. Proposition~\ref{prop:stochasticmomentswelldefined}) and the convergence of a particle approximation towards the mean-field limit  (cf. Proposition~\ref{prop:PoC}) are our main contributions. In view of our initial objective,  the ergodicity of the process, under suitable conditions, in the spirit of \cite{ANGELI2023127301,maillet2023note,delarue2024ergodicity},  should be addressed in the future, but for now this is beyond the scope of the present work.

\subsection{Derivation of the process}\label{sec:derivation}

In this section we forget the gradient part of the dynamics, and we focus on informally designing the stochastic dynamics over $\mathcal P(\R^d)$ that will eventually play a role similar to the Brownian motion in~\eqref{eq:overdamped}.

Let $f=(f_1,\dots,f_p)\in\mathcal C^2(\R^d,\R^p)$. Given a $p$-dimensional Brownian $W$, a drift  $a:\R^p \rightarrow \R^p$ and a diffusion coefficient $s:\R^p \rightarrow \R^{p\times p}$, the goal is to find a process $(\mu_t)_{t\geqslant 0}$ on $\mathcal P(\R^d)$ such that $Z_t:= \mu_t(f) = \int_{\R^d} f(x) \mu_t(\dd x)$ solves
\begin{equation}
\label{eq:EDSZ}
\dd Z_t = a(Z_t) \dd t + s(Z_t) \dd W_t\,.
\end{equation} 
For convenience for readers not (yet) familiar with SPDE or conditional McKean-Vlasov diffusion processes, in this informal derivation we work first with a system of interacting particles (the rigorous definition of the process in general, together with the necessary theoretical background, is exposed in Sections~\ref{sec:definition} and \ref{sec:def-process}), which is besides of interest as it will eventually be what is implemented in practice.

Consider a particle system $X=(X^1,\dots,X^N)$ in $(\R^d)^N$ solving an SDE of the form
\begin{equation}\label{eq:particule-1}
\dd X^i_t = b_{\pi(X_t)}(X_t^i) \dd t + \sigma_{\pi(X_t)}(X_t^i) \dd W_t
\end{equation}
for some drift and diffusion coefficients $(\mu,x)\mapsto b_{\mu}(x),\sigma_{\mu}(x)$ to be determined (with, for $\mu\in\mathcal P(\R^d)$, $b_\mu:\R^d\rightarrow \R^d$ and $\sigma_\mu:\R^d \rightarrow \R^{d\times p}$). Notice that the Brownian motion $W$ is the same for all particles. Applying It$\bar{\text{o}}$ formula and writing $\pi_t=\pi(X_t)$,  for $k\in\cco 1,p\ccf$,
\begin{eqnarray*}
\dd \frac1N\sum_{i=1}^N f_k(X_t^i) &=& \frac1N \sum_{i=1}^N \co\na f_k(X_t^i) \cdot b_{\pi_t}(X_t^i) + \frac12 \sigma_{\pi_t}(X_t^i)  \sigma_{\pi_t}^T(X_t^i) : \na^2 f_k(X_t^i)   \cf \dd t \\
& & \ + \po \frac1N\sum_{i=1}^N\na f_k (X_t^i) \cdot \sigma_{\pi_t}(X_t^i) \pf \dd W_t\,.
\end{eqnarray*} 
In other words,  
\begin{equation}\label{eq:pit}
\dd \pi_t( f) = \pi_t  \co(\na f)^T b_{\pi_t} + \frac12 \sigma_{\pi_t}  \sigma_{\pi_t}^T: \na^2 f   \cf \dd t  \ + \pi_t \po (\na f)^T  \sigma_{\pi_t}\pf \dd W_t\,,
\end{equation}
where we denoted the Jacobian matrix $\na f=(\partial_{x_i} f_j) \in \R^{d\times p}$ (with $i$ the line and $j$ the column) and  wrote
\[ \sigma_{\mu}  \sigma_{\mu}^T: \na^2 f = \begin{pmatrix}
\sigma_{\mu}  \sigma_{\mu}^T: \na^2 f_1\\
\vdots\\
\sigma_{\mu}  \sigma_{\mu}^T: \na^2 f_p
\end{pmatrix}\,.\]
Identifying \eqref{eq:pit} with \eqref{eq:EDSZ} for $Z_t=\pi_t(f)$, for $\mu \in \mathcal P(\R^d)$ we want $\sigma_{\mu}$ to satisfy 
\[\mu \po (\na f)^T  \sigma_{\mu}\pf = s\po \mu(f)\pf \,.  \]
Fix some $\sigma^0_\mu(x) \in\R^{d\times p}$ and set $\sigma_{\mu}(x) = \sigma_\mu^0(x) M_\mu$ with some $M_{\mu}\in\R^{p\times p}$ to be determined. Then 
\[\mu \po (\na f)^T  \sigma_{\mu}\pf =\mu \po (\na f)^T  \sigma_\mu^0 \pf M_{\mu}\,.  \] 
Hence we chose
\[M_{\mu} = \po \mu \po (\na f)^T  \sigma_\mu^0 \pf\pf^{-1} s\po \mu(f)\pf \,, \] 
provided this inverse is well-defined (which is clearly not always the case, but we will come back to this later on; for now, let us assume that it is well-defined for this particular $\mu$). 

Once $\sigma_{\mu}$ fixed, to get \eqref{eq:EDSZ},  we are left to define $b_{\mu}$ in order that 
\[ \mu  \co(\na f)^T b_{\mu} + \frac12 \sigma_{\mu}  \sigma_{\mu}^T: \na^2 f   \cf = a \po \mu(f)\pf\,,\]
namely
\[ \mu  \co(\na f)^T b_{\mu}    \cf = R_{\mu} :=  - \frac12 \co \sigma_{\mu}  \sigma_{\mu}^T: \na^2 f   \cf +  a \po \mu(f)\pf\,.\]
Given some $\sigma^1_\mu(x) \in\R^{d\times p}$, we look for a drift of the form $b_{\mu}(x) = \sigma_{\mu}^1(x) N_{\mu}$ with  $N_{\mu} \in \R^p$ to determine. The previous condition then reads
\[\mu  \co(\na f)^T \sigma_{\mu}^1   \cf N_{\mu} = R_{\mu}\]
and thus, provided the inverse is well-defined, we set
\[N_\mu = \po \mu  \co(\na f)^T \sigma_{\mu}^1   \cf\pf^{-1} R_{\mu}\,.\]

The choices of  $\sigma^0$ and $\sigma^1$ have been left free. Notice that a natural candidate is $\sigma^0_\mu =  \sigma_\mu^1 = \na f$. Indeed, in this case $(\na f)^T \na f$ is a   semi-positive symmetric matrix, and $\mu \co (\na f)^T \na f\cf$ is positive definite  if, for instance, $\mu$ has a positive Lebesgue density and the family $\{(\na f)^T(x), x\in\R^d\}$ has rank $p$ (this last condition being necessary to hope for \eqref{eq:EDSZ} in general, otherwise there exists a non-zero linear combination of the $f_j$'s which is constant; for instance if  $f_3=f_2+f_1$, the evolution of  $\mu_t(f_3)$ is fixed by the two other coordinates). 

To sum up, for $\mu \in \mathcal P(\R^d)$ such that  $\mu(\nabla f^T \nabla f)$ is non-singular, we set

\begin{equation} \label{eq:coeffsp}
     \left\{
        \begin{array}{lll}
        \sigma_{\mu}(x) &=& \nabla f (x)\po \mu[\nabla f^T \nabla f]\pf ^{-1} s(\mu(f))\\
    
        b_{\mu}(x) &=& \nabla f (x) \po \mu[\nabla f^T \nabla f]\pf ^{-1} [a(\mu(f)-\frac{1}{2}\mu(\sigma_{\mu}\sigma_{\mu}^T :\nabla^2 f)] \,.
        \end{array}\right. 
\end{equation}
This determines the dynamics~\eqref{eq:particule-1}.

    \begin{rem}
   In the case where we want the expectation of the law to be a Brownian motion, we retrieve the natural process discussed in the previous section. Indeed, in that case, $f(x)=x$, $s(x)= I_d$ and $a(x)=0$, from which $b_\mu=0$ and $\sigma_\mu = I_d$ for all $\mu \in\mathcal P(\R^d)$, resulting in
   \[\forall i\in\cco 1,N\ccf,\qquad \dd X_t^i = \dd W_t\,,\]
   and then $\pi_t = \frac1N \sum_{i=1}^N \delta_{X_0^i + W_t}$ is the image of $\pi_0$ by $x\mapsto x+W_t$.
    \end{rem}

 Applying It$\bar{\text{o}}$ formula as in \eqref{eq:EDSZ} but now for any smooth observable $g\in \mathcal C^\infty_c(\R^d,\R)$, notice that
\[\dd \pi_t( g) = \pi_t  \co(\na g)^T b_{\pi_t} + \frac12 \sigma_{\pi_t}  \sigma_{\pi_t}^T: \na^2 g   \cf \dd t  \ + \pi_t \po (\na g)^T  \sigma_{\pi_t}\pf \dd W_t\,.\]
In other words, $\pi_t$ is (at least informally) a weak solution of the SPDE
    \begin{equation}\label{eq:EDPS}
            d\mu_t = -\na \cdot \po b_{\mu_t} \mu_t\pf \dd t -  \na \cdot \po    \mu_t \sigma_{\mu_t}\pf   dW_t^0+ \frac{1}{2}\sum_{i,j=1}^d \partial_{i,j}^2 \po  (\sigma_{\mu_t}\sigma_{\mu_t}^T)_{i,j} \mu_t \pf  \dd t \,.
    \end{equation}

    Before proceeding, let us highlight that there are clearly cases where the process is not defined for all times. For instance, take $d=p=1$, $f(x) = x^2$, $a(z)=0$ and $s(z)=1$. Since we are forcing the second moment of the distribution to follow a Brownian motion, a problem will definitely arise when the latter tries to become negative. Since $(\na f)^T(x)\na f(x) = 4 x^2$,  $\mu[\nabla f^T \nabla f]$ is strictly positive for all $\mu \in \mathcal P(\R)$ except $\mu = \delta_0$. On the other hand, $\mu(f)=0$ implies that $\mu=\delta_0$. Considering the stopping time
    \[\tau = \inf\{t\geqslant 0, \mu_0(f) + W_t = 0\}\,,\]
    we expect the solution $\mu_t$ of \eqref{eq:EDPS} to be well defined up to time $ \tau$, with $\mu_t\neq \delta_0$ for all $t<\tau$ so that $\mu_t(f)$ solves \eqref{eq:EDSZ}, i.e. $\mu_t(f) = \mu_0(f) + W_t$ for all $t<\tau$. At time $\tau$, $\mu_{\tau}=\delta_0$, $\mu_{\tau}[\nabla f^T \nabla f]=0$ and we cannot make sense of \eqref{eq:EDPS}.
    
    Even when combining this dynamics with \eqref{eq:granularmedia}, the  regularizing properties of the latter may not be enough to avoid this explosion in finite time. Take for instance in \eqref{eq:granularmedia} the case where $U=W=0$ and $\sigma^2=1$, which amounts to add a Laplacian in \eqref{eq:EDPS}. The second moment $Z_t = \mu_t(f)$ with $f(x)=x^2$ now solves
    \[\dd Z_t = 2 \dd t + \dd W_t\,.\]
  Even with the positive drift due to the elliptic regularization, the probability to reach $\mu_t=\delta_0$ in finite time is still not zero.
 
 \subsection{Overview of the work}

 This paper is organized as follows. Useful theoretical background on conditional McKean-Vlasov equations is recalled in Section~\ref{sec:definition}. In Section~\ref{sec:def-process}, which is the core of our work, we introduce rigorously  the SMD, establish its well-posedness under suitable conditions and a propagation of chaos result for its approximation by a particle system. The proofs of these results are then given in Section~\ref{sec:proof-main}.
 Section~\ref{sec:explosion} discusses further the explosive behavior of the process, first by providing a Lyapunov criterion to prevent it in Section~\ref{subsec:Lyap}, and then by introducing a non-explosive regularized variation of the SMD in Section~\ref{ref:regularise}. The SMD is illustrated first on two simple examples  in Section~\ref{sec:exemples} and then with numerical experiments in Section~\ref{sec:numerique}. Finally, auxiliary technical lemmas are gathered in an appendix.

\subsection{General notations}

Throughout the paper, we identify the space of real \(k \times m\) matrices with the Euclidean space \(\mathbb{R}^{k \times m}\), and for square matrices we use the notation \(A : B\) to denote the standard Hilbert–Schmidt inner product:
\[
A : B := \sum_{i,j=1}^k A_{i,j} B_{i,j}\,.
\]
We denote by $\mathcal{P}(\R^d)$ the space of Borel probability measures on $\R^d$, and by $m_p(\mu)$ the $p$-th moment of $\mu\in \mathcal{P}(\R^d)$:
\[m_p(\mu) =\int_{\R^d}|x|^pd\mu(x)\]
and $\mathcal{P}_p(\R^d)$ be the subset of probability measures with finite $p$-th moment:
\[
\mathcal{P}_p(\R^d) := \left\{ \mu \in \mathcal{P}(\R^d) \;\middle|\;m_p(\mu)< \infty  \right\}.
\]
The \emph{Wasserstein distance of order $p$} between $\mu, \nu \in \mathcal{P}_p(E)$ is defined by
\[
\mathcal{W}_p(\mu, \nu) := \left( \inf_{\pi \in \Gamma(\mu, \nu)} \int_{\R^d\times \R^d} |x-y|^p \, d\pi(x, y) \right)^{1/p},
\]
where $\Gamma(\mu, \nu)$ denotes the set of couplings (i.e., probability measures on $E \times E$ with marginals $\mu$ and $\nu$).

The metric space $(\mathcal{P}_p(\R^d), \mathcal{W}_p)$ is complete and separable \cite{villani2008optimal}. Moreover, convergence in $\mathcal{W}_p$ metrizes the weak convergence of probability measures together with convergence of $p$-th moments. $(\mathcal{P}_p(\R^d), \mathcal{W}_p)$ is a geodesic space: if $\mu, \nu \in \mathcal{P}_p(E)$ and $\pi \in \Gamma_{\mathrm{opt}}(\mu, \nu)$ is an optimal coupling, then the curve $(\mu_t)_{t \in [0,1]}$ defined by
\[
\mu_t := ((1 - t)x + t y)_\# \pi
\]
is a constant-speed geodesic connecting $\mu$ and $\nu$:

\[ \mathcal{W}_p(\mu_s,\mu_t)=|t-s| \,  \mathcal{W}_p(\mu_0,\mu_1)  \quad \forall s,t \in [0,1]\,.\]

\section{On conditional McKean-Vlasov equations} \label{sec:definition}

We will define the process using the framework of conditional McKean-Vlasov equations. The latter is meant to describe the limit as $N\rightarrow \infty$ of a system of interacting particles of the form
\begin{equation} \label{eq:commonnoisePS}
    dX_t^{i,N}=b\po t,X_t^{i,N},\pi(X_t^N)\pf \dd t+\sigma \po t,X_t^{i,N},\pi(X_t^N)\pf \dd W_t^{i}+\sigma^0(t,X_t^{i,N},\pi(X_t^N))\dd W_t^0\,,
\end{equation}
where $(W^0,\dots,W^N)$ are independent Brownian motions and $b,\sigma,\sigma^0$ are suitable coefficients. Without the common noise $W^0$,  the particles are expected to be approximately independent so that their empirical distribution converges to their deterministic common law (which is the so-called propagation of chaos phenomenon). With a common noise, this will only happen conditionally to $W^0$. Hence we expect each particle to behave similarly to a solution of 
\begin{equation} \label{eq:commonnoise}
    dX_t=b(t,X_t,\mu_t)dt+\sigma (t,X_t,\mu_t) dW_t+\sigma^0(t,X_t,\mu_t)dW_t^0\,,
\end{equation}
with $W,W^0$ two independent Brownian motions and $\mu_t$ the conditional law of $X_t$ given $W^0$.

In the rest of this section, for the reader's convenience, we gather the known facts  about conditional McKean-Vlasov equations that will prove useful in our study. Most of the material here is from \cite{carmona2018probabilistic} and \cite{carmona2016mean}, to which we refer for further details and proofs. Notice that, for clarity, in this section we only consider \emph{non-explosive} processes, defined for all times. As usual for Markov processes, explosive processes are then defined through a localization procedure (see Section~\ref{sec:def-process}).

\subsection{Probabilistic framewok}

Conditional McKean Vlasov equations are a refinement of classical MacKean Vlasov equations: the dynamic is similar to the classical case up to an additional common noise term acting on all the interacting particles. These equations naturally arise in models where the particles interact with a random environment: the common noise term can be interpreted as random shocks on the particles coming from the random evolution of the environment. This new noise term induce a strong correlation on the different particles: considering the particle system  \eqref{eq:commonnoisePS}, the particles do not only interact through their empirical measure $\pi(X_t^N)$ but are also impacted by the environment (directed by the Brownian $W^0$).

In order to generalise classical results from the McKean Vlasov theory, it is fundamental to adapt the probabilistic framework in order to deal with this strong dependency between the particles. Indeed, the propagation of chaos classically quantify the diminution of the correlations of the particles when the size of the system goes to infinity: the interactions between the particles progressively becomes an interaction with a continuum of independent particles represented by the law of a typical particle. However, in the conditional McKean Vlasov theory, the particles are still strongly correlated at the limit because of the influence of the common noise term (this term does not disappear like the personal noise terms $W^i$ at the limit). Moreover, the distribution of the particles $\mu_t$ is also subject to the effect of the environment, and thus is expected to be random (while it is deterministic in the classical case of study). However, if one can assume that the realisation of the random environment is known, it seems that the framework of the classical theory is valid: particles evolve in a fixed environment and interact through their empirical distribution. This simple observation is central for the following and justify the name of ``conditional" McKean Vlasov equations. The probabilistic framework of the theory will be constructed in order to justify this ``conditioning with respect to the realisation of the environment", by clearly distinguishing the randomness coming from the environment from the randomness modeling the diffusion of the particles. The mean field limit of the empirical distribution $\pi(X_t^N)$ will then correspond to the conditional law of a typical particle with respect to the common noise term $W^0$. The flow of conditional laws $(\mu_t)$ will be understood as a random process of probability measure living in a Wasserstein space (this flow is determistic in the classical case).

\medskip{}

In the classical theory, the probabilistic framework naturally allows to deal with the linear case, meaning equations of the type:

\begin{equation*} 
    dX_t=b(t,X_t,\mu_t)dt+\sigma (t,X_t,\mu_t) dW_t
\end{equation*}
for $(\mu_t)$ a deterministic flow of probability measure (which may be different from $\mu_t=\mathcal{L}(X_t)$). Then, in the same spirit the probabilistic framework of the conditional McKean Vlasov theory should be able to deal with the equations of the type:

 \begin{equation} \label{eq:flotquelconque}
    dX_t=b(t,X_t,\mu_t)dt+\sigma (t,X_t,\mu_t) dW_t+\sigma^0(t,X_t,\mu_t)dW_t^0\,
\end{equation}
for $(\mu_t)$ a random flow of probability measure seen as a fixed input. 

\medskip{}

The general framework is built in order to manage the case where $(\mu_t)$ involves an additional source of randomness than $W^0$ (this won't be the case in our study). This setting requires assumptions on the random inputs $X_0,\mu,W,W^0$ and the filtered probability space on which the solutions are built. It is classically called ``admissible probabilistic set-up" and is formally detailed in \cite{carmona2018probabilistic} Chapters 1.1 and 1.2. This framework is necessary to give a notion of weak solutions to conditional McKean Vlasov equations but will not be of use in our study (see \cite{hammersley2021weak} for more details on weak solutions). This assumption on the probabilistic set-up is based on the notion of ``immersion" between two filtrations and has been extensively studied to give a definition of weak solutions of various stochastic models \cite{jacod2006weak}. Heurisitcally, this general probabilistic framework justify that given the knowledge of the processes $X^0,\mu,W, W^0$ up to time $t$, observing the filtration $\mathcal{F}_t$ should not introduce a bias in the evolution of the inputs after that time $t$. It is worth noticing that theses questions doesn't appear in the general case of study when the input $\mu_t$ is deterministic (see \cite{carmona2018probabilistic} Remark 1.11). The general study of conditional McKean Vlasov equations is then based on ``admissible probabilistic set-up", from which it is possible to develop a general theory. However, we will see that we don't need this level of generality in our study, which will use a simpler (though less general) probabilistic framework (because we will have enough regularity in the coefficients of the equation).

\medskip{}

Two results of the general theory are central for our study and justify the use of a simplified framework. First, it exists a general result of transfer of solutions into a canonical space (\cite[Lemma 1.27]{carmona2018probabilistic}): if it exists a solution in a general probabilistic set-up, it is possible to transfer this solution into a canonical set-up. Second, a Yamada-Watanabe type result (see \cite[Theorem 1.33]{carmona2018probabilistic}) justifies that if an equation is uniquely solvable (which will be the case when the coefficients $b$, $\sigma$, $\sigma^0$ verify a Lipschitz continuity property), the choice of the probabilistic set-up for the search of a solution has no influence on the law of the solution. Combining the two previous results, it justifies that the use of a ``canonical" set-up is possible and doesn't reduce the generality of the study. This will allow to better distinguish the different sources of noise than in a general setting and will simplify the study of the conditional laws. Note that this approach is standard when one deals with conditional McKean Vlasov equations with coefficients regular enough (to ensure path-wise uniqueness of the solution). The canonical probabilistic set-up is defined as follows.

\begin{defi}(Canonical probabilistic set-up). \label{def:setup}
    Let $(\Omega^0,\mathcal{F}^0,\mathbb{P}^0)$ and $(\Omega^1,\mathcal{F}^1,\mathbb{P}^1)$ be two complete probability spaces endowed with two complete and right continuous filtrations $\mathbb{F}^0=(\mathcal{F}^0_t)_{0 \leq t \leq T}$ and $\mathbb{F}^1=(\mathcal{F}^1_t)_{0 \leq t \leq T}$. Let $W^0=(W_t^0)_{0\leq t \leq T}$ be a $p$ dimensional $\mathbb{F}^0$ Brownian motion and $W=(W_t)_{0\leq t \leq T}$ be a $d$ dimensional $\mathbb{F}^1$ Brownian motion. We define the canonical probabilistic set-up the filtered probability space $(\Omega, \mathcal{F}, \mathbb{F}, \mathbb{P})$ defined as:
    \begin{itemize}
        \item $\Omega=\Omega^0\times \Omega^1$
        \item $(\mathcal{F}, \mathbb{P})$ is the completion of $(\mathcal{F}^0\otimes \mathcal{F}^1, \mathbb{P}^0\otimes \mathbb{P}^1)$
        \item $\mathbb{F}$ is the complete and right augmentation of $(\mathcal{F}_t^0\otimes \mathcal{F}_t^1)_{0\leq t\leq T}$ for $\mathbb{P}$.
    \end{itemize}
    
    On this set-up, we say that a process $(\mu_t)_{0\leq t \leq T}$ is an admissible input if it is defined on $\Omega^0$, is $\mathbb{F}^0$ progressively measurable and has continuous trajectories into a Wasserstein space. 
\end{defi}

In this set-up, generic elements of $\Omega$ will usually be denoted $\omega=(\omega^0, \omega^1)$. We shall usually use the notation $W$ instead of $W^1$ for the personal noise term. We will use the following  abuse of notation: a random variable $Z$ defined on $(\Omega^0, \mathcal{F}^0,\mathbb{F}^0, \mathbb{P}^0)$ will be identified with its natural extension $\tilde{Z}:(\omega_0,\omega^1) \mapsto Z(\omega^0)$ defined on $\Omega$. Likewise, a sub $\sigma$-algebra $\mathcal{G}^0 \subset \mathcal{F}^0$, will be identified with its natural extension $\tilde{\mathcal{G}}^0=\mathcal{G}^0\times \{ \emptyset , \Omega^1\}$. We will use the notation $\mathcal{P}_\alpha(\R^d)$ for the Wasserstein space of order $\alpha\geq1$ in $\R^d$. During the rest of this paper, we will only work on a canonical probabilistic set-up. One should keep in mind that this framework does not impact the generality of the following study because the coefficients of interest $b, \sigma, \sigma^0$ will be regular enough (locally Lispchitz). If one wishes to work with less regular coefficients, this specific probabilistic set-up cannot be used without loss of generality and one must use the general framework of admissible probabilistic set-ups.

\medskip{}

This specific framework is very useful because it separates with ease the different sources of noise. Indeed, all the information corresponding to the evolution of the random environment (directed by the Brownian motion $W^0$) is contained in $\mathcal{F}^0$ while the information corresponding to the personal diffusion of the particle is contained in $\mathcal{F}^1$. It is easier in this setting to formalize the idea of conditioning with respect to the environment. As explained previously, the mean field limit of the empirical distribution is expected to be the flow $(\mathcal{L}(X_t | \mathcal{F}^0))_{0\leq t \leq T}$. 
However, in order to fit the definition of ``admissible input", it is easier to understand the flow $(\mathcal{L}(X_t | \mathcal{F}^0))_{0\leq t \leq T}$ not as probability kernels but more like a stochastic process with values in a probability measure space (see \cite{carmona2018probabilistic} I Prop. 5.7). Once again, the structure of the probabilistic set-up is very useful to formalize this idea and leads to the following result (see \cite[Section 2.1.3]{carmona2018probabilistic} for more details):

\begin{prop} \label{prop:existcondlaw}(Existence of a regular flow of conditional laws)  Let $(X_t)_{0\leq t\leq T}$ be a $\mathbb{F}$ adapted process, with continuous paths in $\R^d$, such that it exists $\alpha\geq1$ such that $\mathbb{E}(\sup_{0\leq t\leq T}|X_t|^\alpha)< \infty$.
Then it exists a random process $(\mu_t)_{0\leq t \leq T}$ defined on $\Omega^0$ with values in the Wasserstein space $\mathcal{P}_\alpha(\R^d)$, which is $\mathbb{F}^0$ adapted and has continuous paths such that for all $t\in [0,T]$, $\mu_t(\omega^0)(\cdot)$ is a conditional law of $X_t$ with respect to $\mathcal{F}^0$. 
\end{prop}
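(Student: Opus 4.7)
The plan is to exploit the product structure of the canonical probabilistic set-up. Since $\Omega = \Omega^0 \times \Omega^1$ (up to completion), $\mathbb{P} = \mathbb{P}^0 \otimes \mathbb{P}^1$, and $\mathcal{F}^0$ is identified with $\mathcal{F}^0 \otimes \{\emptyset,\Omega^1\}$, conditioning on $\mathcal{F}^0$ reduces concretely to freezing the first coordinate $\omega^0$ and integrating against $\mathbb{P}^1$. Accordingly, the natural candidate is to set, for $\mathbb{P}^0$-a.e.\ $\omega^0$,
\[
\mu_t(\omega^0) := \bigl(X_t(\omega^0,\cdot)\bigr)_{\#} \mathbb{P}^1,
\]
the push-forward of $\mathbb{P}^1$ by the slice map $\omega^1 \mapsto X_t(\omega^0,\omega^1)$. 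The whole proof then consists in checking that this object is well-defined and satisfies the three required properties: being a version of the conditional law, being $\mathbb{F}^0$-adapted, and having $\mathcal{W}_\alpha$-continuous paths in $\mathcal{P}_\alpha(\R^d)$.

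For the conditional-law property and adaptedness, Fubini gives, for any bounded Borel $\phi$ and $A \in \mathcal{F}^0_t$,
\[
\mathbb{E}\bigl[\phi(X_t)\1_A\bigr] = \int_{A}\!\!\int_{\Omega^1}\!\phi\bigl(X_t(\omega^0,\omega^1)\bigr)\,d\mathbb{P}^1(\omega^1)\,d\mathbb{P}^0(\omega^0) = \mathbb{E}^{\mathbb{P}^0}\!\bigl[\1_A\, \mu_t(\phi)\bigr],
\]
which at the same time shows that $\omega^0 \mapsto \mu_t(\omega^0)(\phi)$ is $\mathcal{F}^0_t$-measurable and thus identifies $\mu_t$ with $\mathcal{L}(X_t\mid\mathcal{F}^0)$. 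The moment hypothesis gives $\mathbb{E}^{\mathbb{P}^0}[m_\alpha(\mu_t)] = \mathbb{E}[|X_t|^\alpha] \leq \mathbb{E}[\sup_{0\leq s\leq T}|X_s|^\alpha] < \infty$, so that $\mu_t(\omega^0) \in \mathcal{P}_\alpha(\R^d)$ almost surely. For continuity, a second application of Fubini ensures $\mathbb{E}^{\mathbb{P}^1}[\sup_s|X_s(\omega^0,\cdot)|^\alpha]<\infty$ for $\mathbb{P}^0$-a.e.\ $\omega^0$; for such an $\omega^0$, the trivial coupling of $X_{t_n}(\omega^0,\cdot)$ and $X_t(\omega^0,\cdot)$ under $\mathbb{P}^1$ yields
\[
\mathcal{W}_\alpha^\alpha\bigl(\mu_{t_n}(\omega^0),\mu_t(\omega^0)\bigr) \leq \mathbb{E}^{\mathbb{P}^1}\bigl[|X_{t_n}(\omega^0,\cdot) - X_t(\omega^0,\cdot)|^\alpha\bigr] \xrightarrow[n\to\infty]{} 0
\]
by path-continuity of $X$ and dominated convergence with dominating function $2^\alpha \sup_s|X_s(\omega^0,\cdot)|^\alpha$. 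Redefining $\mu_t$ to be any fixed element of $\mathcal{P}_\alpha(\R^d)$ on the exceptional null set preserves $\mathbb{F}^0$-adaptedness and gives continuous trajectories everywhere.

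The main obstacle I anticipate is purely measure-theoretic bookkeeping. Strictly speaking, $X_t$ is measurable with respect to the \emph{completion} $\mathcal{F}_t$ of $\mathcal{F}^0_t \otimes \mathcal{F}^1_t$ rather than the product $\sigma$-algebra itself, so one must first establish that the slice $\omega^1 \mapsto X_t(\omega^0,\omega^1)$ is $\mathcal{F}^1_t$-measurable for $\mathbb{P}^0$-a.e.\ $\omega^0$. This is resolved in the standard way: approximate $X_t$ by a function measurable with respect to the uncompleted product, modify on a $\mathbb{P}$-null set, and use Fubini to transfer the negligibility from $\Omega$ to almost every slice. The same kind of argument, combined with the separability of $\mathcal{P}_\alpha(\R^d)$, is what is needed to upgrade $t$-by-$t$ measurability to joint measurability in $(t,\omega^0)$ and hence to the progressive measurability required by the statement.
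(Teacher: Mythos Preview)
The paper does not actually prove this proposition: it is stated as background material and the reader is referred to \cite[Section~2.1.3]{carmona2018probabilistic} for details. Your argument is correct and is essentially the one given in that reference, which likewise exploits the product structure $\Omega=\Omega^0\times\Omega^1$ to define $\mu_t(\omega^0)$ as the law of the slice $X_t(\omega^0,\cdot)$ under $\mathbb{P}^1$, verifies the conditional-law property and adaptedness via Fubini, and obtains $\mathcal W_\alpha$-continuity through the trivial coupling and dominated convergence.
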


\begin{rem}
    For any stochastic process with values in a metric space, if it exists a continuous modification of this process, then this modification is unique (up to indistinguishability). Then, given a process $(X_t)_{0\leq t\leq T}$ verifying the assumptions of Proposition \ref{prop:existcondlaw}, the process $(\mu_t)_{0\leq t \leq T}$ given by the proposition is pathwise unique. 
\end{rem}

This proposition ensures that we may consider a flow of conditional laws of a typical particle with respect to the common noise $\mathcal{F}^0$ which is an ``admissible input". Intuitively, a measure valued process $(\mu_t)$ is said to be an ``admissible input" if it has enough regularity (both in  time and space) and does not anticipate in the future (because it is adapted to $\mathbb{F}^0$). The previous proposition formally justifies that the mean field distribution knowing the common noise has enough regularity if the particles verify a locally uniform in time moment bound (which will be true in when the coefficients $b,\sigma,\sigma^0$ are Lipschitz). It is thus meaningful to consider the non-linear equation \eqref{eq:flotquelconque} with $(\mu_t)$ corresponding to the conditional law of the particle knowing the common noise.

\medskip{}

It is possible to give a notion of solution for conditional McKean Vlasov equations in a canonical probabilistic setting for particles verifying a moment bound condition:

\begin{defi}(Weak solution) \label{def:strongexistence}
    Given a canonical probability set-up (Definition \ref{def:setup}) and an $\mathcal{F}^1_0$ measurable initial condition $X_0$ with values on $\R^d$ verifying $\mathbb{E}(|X_0|^\alpha)< \infty$ for some $\alpha\geq1$, we call a weak solution to the conditional McKean Vlasov equation with coefficients $b,\sigma,\sigma^0$ on the interval $[0,T]$ of order $\alpha\geq1$ an $\mathbb{F}$ adapted process $(X_t)_{0\leq t \leq T}$ with values in $\R^d$ verifying the following conditions:
    \begin{enumerate}
        \item $(X_t)_{0\leq t \leq T}$ has continuous paths and $\mathbb{E}(\underset{0\leq t\leq T}{\text{sup}}|X_t|^\alpha)< \infty$
        \item $\mathbb{E}[\hspace{2pt} \int_0^T (|b(t,X_t,\mu_t)|+ |\sigma(t,X_t,\mu_t)|^2+|\sigma^0(t,X_t,\mu_t)|^2)dt ] < \infty$
        \item $\mathbb{P}$ a.s, for all $t\in [0,T]$:  
        \begin{equation}
            \displaystyle X_t=X_0+\int_0^tb(s,X_s,\mu_s)ds+\int_0^t\sigma(s,X_s,\mu_s)dW_s+\int_0^t\sigma^0(s,X_s,\mu_s)dW^0_s
        \end{equation}
        
    \end{enumerate}
    with $(\mu_t)_{0\leq t \leq T}$ given by Proposition \ref{prop:existcondlaw}.
\end{defi}

In the following we will usually call solution the couple $X,\mu$, with $\mu$ be the regular flow of conditional distribution of the solution $X$ given by Proposition \ref{prop:existcondlaw}. As with the study of classical stochastic differential equation, the adjective ``weak" refers to the fact that the outputs $X$ and $\mu$ are not necessarily adapted to the filtration generated by the inputs $X_0$, $W$ and $W^0$. Thus, it is natural to define ``strong" solutions, that verify this specific property.

\begin{defi} (Strong solution) \label{def:strongexistencefin}
    Let $(X_t)_{0\leq t\leq T}$ be a weak solution of order $\alpha\geq 1$ and $(\mu_t)_{0\leq t\leq T}$  be the the regular flow of conditional distribution given by Proposition \ref{prop:existcondlaw}. This solution is said to be ``strong" if $(X_t)_{0\leq t\leq T}$ is $\mathbb{F}^{X_0,W,W^0}$ adapted and $(\mu_t)_{0\leq t\leq T}$ is $\mathbb{F}^{W^0}$ adapted, with $\mathbb{F}^{X_0,W,W^0}$ (resp. $\mathbb{F}^{W^0}$) be the right continuous and completed augmentation of the filtration generated by $X^0,W,W^0$ (resp. $W^0$).
\end{defi}

It is also possible to define strong uniqueness for these equations.

\begin{defi}(Strong uniqueness)
    We say that strong uniqueness holds for a conditional McKean Vlasov equation with coefficients $b,\sigma,\sigma^0$ on the interval $[0,T]$ of order $\alpha\geq1$ if for any canonical set-up $(\Omega,\mathcal{F},\mathbb{F},\mathbb{P})$, for any initial condition $X_0$ verifying $\mathbb{E}(|X_0|^\alpha)< \infty$, if $(X_t)_{0\leq t \leq T}$ and $(X'_t)_{0\leq t \leq T}$ are two weak solutions, then $$\mathbb{P} \text{    a.s, for all   } t \in [0,T], \quad X_t=X'_t $$
\end{defi}

Whenever an equation is strongly solvable and strong uniqueness holds, we say that this equation is strongly uniquely solvable. 

\medskip{}

If $(X_t)_{0\leq t\leq T}$ is a solution of a conditional McKean Vlasov equation, its flow of conditional distribution given the common noise $(\mu_t)_{0\leq t \leq T}$ is solution in a weak sense to the stochastic Fokker-Plank equation.

\begin{prop}(Stochastic Fokker-Plank equation) \label{prop:stochasticfokkerplank}
    Let $(X_t)_{0\leq t\leq T}$ be a solution to the conditional McKean Vlasov equation with coefficients $b,\sigma,\sigma^0$, and $(\mu_t)_{0\leq t \leq T}$ be the flow of conditional distribution given the common noise. Then, $\mathbb{P}^0$ a.s, $(\mu_t)$ satisfies the following equation in the distributional sense when acting on $\mathcal{C}^\infty_0(\R^d,\R)$ the space of smooth functions from $\R^d$ to $\R$ going to $0$ at infinity:
    \begin{equation} \label{eq:stochasticFP}
        \begin{split}
            d\mu_t =& -\na_x \cdot (b(t, \cdot, \mu_t) \mu_t)dt+ \frac{1}{2}\mathrm{trace}[\na^2_{xx}((\sigma \sigma^T(t,\cdot, \mu_t)+\sigma^0 (\sigma^0)^T(t,\cdot, \mu_t))\mu_t]dt \\
            &- \na_x \cdot ( \sigma^0(t, \cdot, \mu_t)  \mu_t) dW_t^0
        \end{split}
    \end{equation}
\end{prop}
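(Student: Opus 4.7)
The plan is to apply It\^o's formula to $\varphi(X_t)$ for an arbitrary test function $\varphi\in\mathcal{C}^\infty_0(\R^d,\R)$, take the conditional expectation with respect to the common noise $\sigma$-algebra $\mathcal{F}^0$, and identify the result with the weak formulation of \eqref{eq:stochasticFP} tested against $\varphi$. First, from Definition~\ref{def:strongexistence}, It\^o's formula yields
\begin{align*}
\varphi(X_t) &= \varphi(X_0) + \int_0^t \na \varphi(X_s) \cdot b(s,X_s,\mu_s)\, \dd s + \int_0^t \na\varphi(X_s)\cdot \sigma(s,X_s,\mu_s)\, \dd W_s \\
& \quad + \int_0^t \na\varphi(X_s)\cdot \sigma^0(s,X_s,\mu_s)\, \dd W^0_s + \frac12 \int_0^t \po \sigma\sigma^T + \sigma^0(\sigma^0)^T\pf (s,X_s,\mu_s) : \na^2 \varphi(X_s)\, \dd s\,.
\end{align*}
Taking $\mathbb{E}[\cdot\mid \mathcal{F}^0]$, the left-hand side becomes $\mu_t(\varphi)$ by the very definition of $\mu_t$ in Proposition~\ref{prop:existcondlaw}. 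The two Lebesgue integrals are handled by conditional Fubini together with the key identity $\mathbb{E}[F(X_s,\mu_s)\mid \mathcal{F}^0_s] = \mu_s(F(\cdot,\mu_s))$, which holds because $\mu_s$ is $\mathcal{F}^0_s$-measurable.

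For the orthogonal noise term $\int_0^t \na\varphi(X_s)\cdot\sigma(s,X_s,\mu_s)\,\dd W_s$, the canonical set-up of Definition~\ref{def:setup} ensures that $W$ is independent of $\mathcal{F}^0$; approximating the integrand by $\mathbb{F}$-simple processes and using the product structure $\Omega = \Omega^0\times\Omega^1$, its $\mathcal{F}^0$-conditional expectation vanishes. The delicate step is the common noise integral. Here I would invoke the conditional stochastic Fubini identity
\[
\mathbb{E}\left[\int_0^t H_s\, \dd W^0_s\;\Big|\;\mathcal{F}^0\right] \;=\; \int_0^t \mathbb{E}[H_s\mid \mathcal{F}^0_s]\, \dd W^0_s\,,
\]
valid for $\mathbb{F}$-adapted integrands $H$ with sufficient integrability (see e.g.\ \cite{carmona2016mean}), which again relies crucially on the product structure of the canonical set-up since $H_s$ need not be $\mathbb{F}^0$-adapted. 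Applied with $H_s = \na\varphi(X_s)\cdot \sigma^0(s,X_s,\mu_s)$, and using the conditioning identity recalled above, this produces $\int_0^t \mu_s\po \na\varphi \cdot \sigma^0(s,\cdot,\mu_s)\pf \dd W^0_s$, which is exactly the martingale part of the weak SPDE obtained from \eqref{eq:stochasticFP} by testing against $\varphi$ and integrating by parts.

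Assembling the four contributions produces the weak form of \eqref{eq:stochasticFP} tested against $\varphi$, for all $t\in[0,T]$, outside a $\mathbb{P}^0$-null set that a priori depends on $\varphi$ and $t$. To obtain a single null set on which the equation holds simultaneously for all smooth compactly supported test functions and all times, I would argue by continuity of both sides in $t$ (the stochastic integral admits a continuous modification, and all other terms are trivially continuous in $t$) and pick a countable subset $(\varphi_n)$ dense in $\mathcal{C}^\infty_0(\R^d,\R)$ for a topology compatible with $\mu_s$-integration, extending the identity by density. The main obstacle is the commutation identity for the $\dd W^0_s$ integral: one cannot simply ``pull out'' the It\^o integral because the integrand involves $X_s$, which is not $\mathcal{F}^0$-adapted, and the justification requires the specific canonical framework of Section~\ref{sec:definition} rather than a generic filtration.
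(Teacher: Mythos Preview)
The paper does not actually prove this proposition: it appears in Section~\ref{sec:definition}, which the authors explicitly present as a collection of known facts from \cite{carmona2018probabilistic} and \cite{carmona2016mean}, ``to which we refer for further details and proofs.'' So there is no in-paper proof to compare against.

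That said, your approach is correct and matches exactly the technique the paper deploys when it \emph{does} prove closely related statements. In the proof of Lemma~\ref{lem:evolutionmoment} (and implicitly in Corollary~\ref{cor:edps}), the authors apply It\^o's formula to $g(X_t)$, take the $\mathcal{F}^0$-conditional expectation, and invoke \cite[Lemma~(A.5)]{hammersley2021weak} to handle precisely the two delicate points you identify: that the $\dd W_s$ integral has zero conditional expectation, and that the $\dd W^0_s$ integral commutes with $\mathbb{E}[\,\cdot\mid\mathcal{F}^0]$ via the identity you call ``conditional stochastic Fubini.'' Your recognition that this commutation is the crux of the argument, and that it relies on the product structure of the canonical set-up because the integrand is not $\mathbb{F}^0$-adapted, is exactly right. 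The separability/continuity argument at the end to upgrade to a single $\mathbb{P}^0$-null set is standard and also correct.
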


Note that in the classical case ($\sigma^0=0$), we recover the usual (deterministic) Fokker-Plank equation. This proposition justifies the use of conditional McKean Vlasov equations in our study, whose framework leads to the derivation of a stochastic partial differential equation that will be interpreted as a noisy version of a Wasserstein gradient flow.

\medskip{}
 
To sum up, the probabilistic set-up is made for the study of conditional McKean Vlasov equation with coefficients $b,\sigma, \sigma^0$ regular enough. It allows to give a definition of the existence and uniqueness of conditional McKean Vlasov processes, and of the underlying stochastic Fokker-Plank equation. The next subsection will be dedicated to an exposition of the classical results when the coefficients are Lipschitz continuous, for which the previous framework is perfectly adapted. We will present the main results of the theory: strong existence and uniqueness of the solutions, a mesurability result justifying in this setting the adjective ``strong", a Yamada-Watanabe type result on the conditional laws and finally a propagation of chaos result.

\subsection{Study of the Lipschitz case}

The goal of this subsection is to present some of the most important results of the theory for Lipschitz coefficients. These results are well known in the classical McKean Vlasov theory and are very similar in the conditional case. The difficulty of conditional McKean Vlasov equations rely more on the construction of an appropriate theoretical framework than in the proofs of these results. Like in the classical theory, the Lipschitz case consists in a simple (yet very useful) framework in which many results are true in a strong sense. During the rest of this subsection, we will present results on equations with coefficients verifying the following hypothesis.

\begin{assu}(Lipschitz and locally bounded coefficients) \label{hyp:lipschitzbornés}
    It exists $\alpha\geq1$ such that the coefficients $b,\sigma, \sigma^0$ defined on $[0,T]\times \R^d\times \mathcal{P}_\alpha(\R^d)$ with values respectively on $\R^d$, $\R^{d\times d}$ and $\R^{d\times p}$ verify the following conditions:
    \begin{enumerate}
        \item The coefficients are bounded in bounded subsets of $[0,T]\times \R^d\times \mathcal{P}_\alpha(\R^d)$
        \item The coefficients are Lipschitz continuous on $x\in \R^d$ and $\mu\in \mathcal{P}_\alpha(\R^d)$ uniformly in time $t\in[0,T]$, $\R^d$ being equipped with the euclidean norm and $\mathcal{P}_\alpha(\R^d)$ with the Wasserstein distance of order $\alpha$.
    \end{enumerate}
\end{assu}

Under this hypothesis, conditional McKean Vlasov equations are strongly uniquely solvable.

\begin{prop} (Strong Existence and Uniqueness) \label{prop:existcoefflipschitz}
    Suppose that the coefficients $b,\sigma,\sigma^0$ verify Assumption \ref{hyp:lipschitzbornés} with $\alpha\geq1$. Then, given a canonical probabilistic set-up $ (\Omega, \mathcal{F}, \mathbb{F}, \mathbb{P})$ and an $\mathcal{F}_0^1$ measurable initial condition $X_0$ satisfying $\mathbb{E}(|X_0|^\alpha)< \infty$ strong existence hold on this probabilistic set-up and the solution is strongly unique.
\end{prop}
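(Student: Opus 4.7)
The plan is to establish the result by a Banach fixed-point argument on the space of admissible inputs, carried out conditionally on the common noise, in the spirit of the classical McKean--Vlasov scheme. Fix the canonical set-up of Definition~\ref{def:setup} and an initial condition $X_0$ with $\mathbb{E}|X_0|^\alpha<\infty$, and let $\mathcal{I}_T$ denote the space of admissible inputs $\mu:\Omega^0\to\mathcal{C}\po [0,T],\mathcal{P}_\alpha(\R^d)\pf$ (i.e.\ $\mathbb{F}^0$-progressively measurable, continuous in time, with $\mathbb{E}^0[\sup_{t\leq T} m_\alpha(\mu_t)]<\infty$), endowed with the distance
\[\rho_T(\mu,\mu')\ :=\ \mathbb{E}^0\co \sup_{t\leq T}\mathcal{W}_\alpha(\mu_t,\mu'_t)^\alpha\cf^{1/\alpha}\,.\]
To each $\mu\in\mathcal{I}_T$ I would associate the classical SDE
\[\dd X_t^\mu = b(t,X_t^\mu,\mu_t)\dd t+\sigma(t,X_t^\mu,\mu_t)\dd W_t+\sigma^0(t,X_t^\mu,\mu_t)\dd W_t^0\,,\qquad X_0^\mu=X_0\,.\]
Since $\mu$ is a frozen input and Assumption~\ref{hyp:lipschitzbornés} gives Lipschitz-in-$x$ coefficients uniformly in $(t,\omega)$ with at most linear growth (combining local boundedness and Lipschitzness), standard SDE theory on $(\Omega,\mathcal{F},\mathbb{F},\mathbb{P})$ provides a unique strong $\mathbb{F}$-adapted solution $X^\mu$ with $\mathbb{E}\co \sup_{t\leq T}|X_t^\mu|^\alpha\cf<\infty$. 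Proposition~\ref{prop:existcondlaw} then produces a continuous $\mathbb{F}^0$-adapted process $\Phi(\mu)_t:=\mathcal{L}(X_t^\mu\mid\mathcal{F}^0)$ valued in $\mathcal{P}_\alpha(\R^d)$, which belongs to $\mathcal{I}_T$ (its $\alpha$-moment is controlled by the conditional expectation of $|X^\mu|^\alpha$).

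The heart of the proof is to show that $\Phi$ is a contraction on $\mathcal{I}_T$, either for $T$ sufficiently small or on the whole interval using an exponentially weighted variant of $\rho_T$. Given $\mu,\mu'\in\mathcal{I}_T$, I would couple the associated solutions $X^\mu,X^{\mu'}$ through the same initial condition and driving Brownians, and apply It\^o's formula, Burkholder--Davis--Gundy and the Lipschitz bounds on $b,\sigma,\sigma^0$ to obtain
\[\mathbb{E}\co \sup_{s\leq t}|X_s^\mu-X_s^{\mu'}|^\alpha\cf\ \leq\ C\int_0^t\mathbb{E}\co \sup_{r\leq s}|X_r^\mu-X_r^{\mu'}|^\alpha\cf \dd s\ +\ C\int_0^t\mathbb{E}\co\mathcal{W}_\alpha(\mu_s,\mu'_s)^\alpha\cf \dd s\,.\]
The key linking observation is that the conditional joint law $\mathcal{L}\po(X_t^\mu,X_t^{\mu'})\mid\mathcal{F}^0\pf$ is an admissible coupling of $\Phi(\mu)_t$ and $\Phi(\mu')_t$, so
\[\mathcal{W}_\alpha\po\Phi(\mu)_t,\Phi(\mu')_t\pf^\alpha\ \leq\ \mathbb{E}\co |X_t^\mu-X_t^{\mu'}|^\alpha\bigm|\mathcal{F}^0\cf\,.\]
Integrating and applying Gronwall's lemma then yields $\rho_T(\Phi(\mu),\Phi(\mu'))^\alpha\leq C T e^{CT}\,\rho_T(\mu,\mu')^\alpha$, which is a strict contraction for $T$ small. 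The fixed point $\mu^\star$ is extended to the full interval by iterating the construction on successive sub-intervals, using the endpoint law as the new initial input.

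The main technical subtlety lies in the interplay between the filtrations $\mathbb{F}^0$ and $\mathbb{F}$, and in upgrading the fixed point to a genuinely strong solution in the sense of Definition~\ref{def:strongexistencefin}. Ensuring that $\Phi(\mu)$ is $\mathbb{F}^0$-adapted (and not merely $\mathbb{F}$-adapted) relies crucially on the product structure of the canonical set-up---which makes $\mathcal{F}^1$ independent of $\mathcal{F}^0$---combined with Proposition~\ref{prop:existcondlaw}. To obtain $\mathbb{F}^{W^0}$-adaptedness of $\mu^\star$ and $\mathbb{F}^{X_0,W,W^0}$-adaptedness of $X^\star$, one starts the Picard iteration from the deterministic input $\mu^{(0)}_t\equiv\mathcal{L}(X_0)$: each iterate $X^{(n)}$ is then a strong solution of a classical SDE whose coefficients depend only on $(X_0,W,W^0)$, so by induction $X^{(n)}$ is $\mathbb{F}^{X_0,W,W^0}$-adapted and $\mu^{(n)}$ is $\mathbb{F}^{W^0}$-adapted, and these properties pass to the limit. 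Strong uniqueness is finally a direct byproduct of the same contraction estimate applied to the difference of any two weak solutions sharing the same initial condition.
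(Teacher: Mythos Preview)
Your proposal is correct and follows the standard fixed-point scheme for conditional McKean--Vlasov equations. The paper itself does not give a detailed proof: it only states that ``the proof of this statement is based on a fixed point iteration in the Banach space of $\mathbb{F}$ adapted continuous processes verifying a uniform in time moment bound'', referring implicitly to \cite{carmona2018probabilistic} for the details.

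The one noteworthy difference is the space on which the contraction is run. The paper's sketch points to a fixed point in the space of $\mathbb{F}$-adapted \emph{processes} $X$ (with the map $X\mapsto X'$ obtained by freezing the conditional law of $X$ and solving the resulting linear SDE), whereas you run the fixed point on the space of $\mathbb{F}^0$-adapted \emph{measure flows} $\mu$ (with the map $\mu\mapsto\Phi(\mu)=\mathcal{L}(X^\mu\mid\mathcal{F}^0)$). These are the two standard, essentially equivalent parameterizations of the same argument, both present in the Carmona--Delarue reference. Your version has the mild advantage that the $\mathbb{F}^{W^0}$-adaptedness of the limiting $\mu^\star$ is built into the iteration space from the start, which makes the ``strong'' part of the conclusion slightly more transparent; the process-level version, on the other hand, avoids invoking Proposition~\ref{prop:existcondlaw} at each step of the iteration. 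Either way, the contraction estimate, the coupling bound $\mathcal{W}_\alpha(\Phi(\mu)_t,\Phi(\mu')_t)^\alpha\leq\mathbb{E}[|X_t^\mu-X_t^{\mu'}|^\alpha\mid\mathcal{F}^0]$, and the Picard argument for adaptedness that you outline are exactly what is needed.
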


The proof of this statement is based on a fixed point iteration in the Banach space of $\mathbb{F}$ adapted continuous processes verifying a uniform in time moment bound. This result will be central for the study presented in Section \ref{sec:def-process}, whose objective is to prove the existence and the uniqueness for the coefficients defined by Equation \eqref{eq:coeffsp}. In this study, the coefficients will be singular in the general case, so it will not possible to apply directly the results of Proposition \ref{prop:existcoefflipschitz}. Nevertheless, it will be possible to use a localization procedure based on it, to ensure existence and uniqueness of the solution up to a blowing time.

\medskip{}

The regular flow of conditional measure $\mu$ is built as a version of the conditional laws of the solution given the $\sigma$-field $\mathcal{F}^0$. However, when this flow is adapted to the input $W^0$, it seems that knowing the entire $\sigma$-field $\mathcal{F}^0$ is rather unnecessary: one may think that knowing the information contained in the common noise is enough. This intuition is true when the equation is strongly uniquely solvable.

\begin{prop}
    Suppose that the coefficients verify Assumption \ref{hyp:lipschitzbornés} with $\alpha\geq1$. Given a probabilistic set-up $(\Omega,\mathcal{F},\mathbb{F},\mathbb{P})$, an $\mathcal{F}_0^1$ measurable initial condition $X_0$ verifying $\mathbb{E}(|X_0|^\alpha)< \infty$, let $(X_t)_{0\leq t \leq T}$ be the unique solution and $(\mu_t)_{0\leq t \leq T}$ be its regular conditional flow given $\mathcal{F}^0$. Then for any $t\in [0,T]$, $\mathbb{P}^0$ a.s, $\mu_t$ is a version of a conditional distribution of $X_t$ given the $\sigma$-field $\sigma( W^0_s, 0\leq s\leq T)$.
\end{prop}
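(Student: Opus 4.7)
The plan is to exploit the strong-solution structure coming from strongly unique solvability: by Proposition~\ref{prop:existcoefflipschitz} the solution $X$ is strong, so (Definition~\ref{def:strongexistencefin}) $X_t$ is $\mathcal{F}_t^{X_0,W,W^0}$-measurable. Standard functional representation then yields a measurable map $\Psi_t$ such that
\[
X_t \;=\; \Psi_t\bigl(X_0,\, W_{\cdot\wedge t},\, W^0_{\cdot\wedge t}\bigr) \qquad \mathbb{P}\text{-a.s.}
\]
On the canonical set-up, $X_0$ is $\mathcal{F}_0^1$-measurable and $W$ is $\mathbb{F}^1$-adapted, so both live essentially on $\Omega^1$, while $W^0$ lives on $\Omega^0$. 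This clean separation of the randomness is exactly what makes the conditional law given $\mathcal{F}^0$ depend on $\omega^0$ only through $W^0(\omega^0)$.

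To make this precise, fix a bounded continuous $\phi:\R^d\to\R$. Using the product structure $(\Omega,\mathcal{F},\mathbb{P}) = (\Omega^0\otimes \Omega^1, \mathcal{F}^0\otimes\mathcal{F}^1, \mathbb{P}^0\otimes\mathbb{P}^1)$ (up to completion) and the Fubini theorem applied to a non-completed measurable modification of $\Psi_t$, I would establish that for $\mathbb{P}^0$-a.e.\ $\omega^0$,
\[
\mathbb{E}[\phi(X_t)\,|\,\mathcal{F}^0](\omega^0) \;=\; \int_{\Omega^1} \phi\bigl(\Psi_t(X_0(\omega^1), W(\omega^1), W^0(\omega^0))\bigr)\, d\mathbb{P}^1(\omega^1).
\]
The right-hand side depends on $\omega^0$ only via the trajectory $W^0(\omega^0)$, hence $\mathbb{E}[\phi(X_t)\,|\,\mathcal{F}^0]$ is $\sigma(W^0)$-measurable (after completion against $\mathbb{P}^0$-null sets).

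Once this factorization is in hand, the conclusion follows from the tower property: since $\sigma(W^0)\subset \mathcal{F}^0$ and $\mathbb{E}[\phi(X_t)\,|\,\mathcal{F}^0]$ is already $\sigma(W^0)$-measurable,
\[
\mathbb{E}[\phi(X_t)\,|\,\sigma(W^0)] \;=\; \mathbb{E}\bigl[\mathbb{E}[\phi(X_t)\,|\,\mathcal{F}^0]\,\bigm|\,\sigma(W^0)\bigr] \;=\; \mathbb{E}[\phi(X_t)\,|\,\mathcal{F}^0].
\]
Applying this to a countable determining family (for instance a countable dense subset of $C_b(\R^d)$ for the weak topology) and using the continuity in $x$ of the resulting functionals, the regular version $\mu_t(\omega^0)$ provided by Proposition~\ref{prop:existcondlaw} simultaneously serves as a version of the conditional law given $\sigma(W^0)$, except on a $\mathbb{P}^0$-null set independent of~$\phi$.

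The only real obstacle is bookkeeping around the completion of the product $\sigma$-field: the Fubini identity must be applied to a genuinely $\mathcal{F}^0\otimes \mathcal{F}^1$-measurable version of the functional $\omega\mapsto \phi(\Psi_t(X_0,W,W^0))$, which is standard since strong adaptation of $X$ combined with completions allows one to replace $\Psi_t$ by such a version up to a $\mathbb{P}$-null set. Everything else is a routine use of independence, the tower property, and uniqueness of conditional laws on a countable determining class.
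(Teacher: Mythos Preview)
The paper does not give its own proof of this proposition: it is stated as background material from the Carmona--Delarue references, so there is no in-paper argument to compare against.

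Your argument is correct and is essentially the standard route taken in that literature. One remark: you work harder than necessary by passing through a functional representation of $X_t$. In the paper's setup, Proposition~\ref{prop:existcoefflipschitz} already asserts that the solution is \emph{strong}, and Definition~\ref{def:strongexistencefin} builds into ``strong'' the fact that $(\mu_t)$ is $\mathbb{F}^{W^0}$-adapted. Hence $\mu_t(\phi)=\mathbb{E}[\phi(X_t)\mid\mathcal{F}^0]$ is $\sigma(W^0_s,\,s\le t)$-measurable for every bounded continuous $\phi$, and your tower-property step
\[
\mathbb{E}[\phi(X_t)\mid\sigma(W^0)]=\mathbb{E}\bigl[\mathbb{E}[\phi(X_t)\mid\mathcal{F}^0]\,\bigm|\,\sigma(W^0)\bigr]=\mathbb{E}[\phi(X_t)\mid\mathcal{F}^0]
\]
concludes immediately, together with the countable-determining-class argument you already describe. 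The functional representation $\Psi_t$ and the Fubini bookkeeping are really the content of the \emph{proof} that strong existence holds (i.e., that $\mu$ is $\mathbb{F}^{W^0}$-adapted), which in this paper is delegated to the cited references; once you invoke Proposition~\ref{prop:existcoefflipschitz} as a black box, you may as well use its full conclusion.
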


This proposition ensures that in the Lipschitz case, the flow of the conditional distributions of a typical particle $X_t$ given $\mathcal{F}^0$ only depends on the common noise $W^0$ and is adapted to its natural filtration. 

\medskip{}

Before presenting the propagation of chaos set-up, it is important to state a ``uniqueness in law" result, to justify the intuition that $\mu_t$ represent the (conditional) distribution of any ``typical" particle evolving in the same random environment. More precisely, two ``typical" particles $X$ and $X'$, whose initial states are independent with the same distribution, and whose evolution is directed by the same common noise $W^0$ and by two independent personal noises $W$ and $W'$, must have the same mean field distribution $\mu_t=\mu'_t$. This ``Yamada-Watanabe" type result is true when the coefficients are Lipschitz continuous.

\begin{prop} (Uniqueness of the mean field distribution) \label{prop:uniqueMFD}
    Suppose that the coefficients verify Assumption \ref{hyp:lipschitzbornés} for some $\alpha\geq1$. Given a probabilistic set-up $(\Omega,\mathcal{F},\mathbb{F},\mathbb{P})$, let $X_0$ and $X_0'$ be two $\mathcal{F}_0^1$ measurable initial conditions, independent and identically distributed verifying $\mathbb{E}(|X_0|^\alpha)< \infty$. Let $W'$ be a d dimensional $\mathbb{F}^1$ Brownian motion independant of $W$. Let $X$ (resp. $X'$) be the solution of the equation directed by $(X_0,W,W^0)$ (resp. $(X_0,W',W^0)$ ) and $\mu$ (resp. $\mu'$) the flow of conditional distribution associated. 
    
    Then $\mathbb{P}^0$ a.s, for all $t\in [0,T]$, $\mu_t=\mu'_t$ 
\end{prop}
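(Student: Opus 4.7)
My plan is to use a Yamada--Watanabe style transfer argument that exploits the strong well-posedness established in Proposition~\ref{prop:existcoefflipschitz}, together with the previous proposition which guarantees that the conditional flow $\mu$ is in fact $\mathbb{F}^{W^0}$-adapted. I would write $\mu_t = \Phi_t(W^0)$ and $\mu'_t = \Phi'_t(W^0)$ for measurable functionals of the common noise alone, and argue that $\Phi = \Phi'$ by showing that $X'$ can equivalently be characterised as the unique strong solution of the \emph{classical} SDE that takes $\Phi(W^0)$ --- rather than its own conditional law --- as measure parameter.

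Concretely, I would first freeze the measure input to be the $\mathbb{F}^{W^0}$-adapted flow $\Phi(W^0)$. The McKean--Vlasov structure then collapses and one is left with a classical SDE with random Lipschitz coefficients,
\begin{equation*}
dY_t = b(t,Y_t,\Phi_t(W^0))\,dt + \sigma(t,Y_t,\Phi_t(W^0))\,dB_t + \sigma^0(t,Y_t,\Phi_t(W^0))\,dW_t^0,
\end{equation*}
for which standard theory furnishes a measurable strong solution map $Y = \Psi(Y_0,B,W^0)$. By construction $X = \Psi(X_0,W,W^0)$, so I would define the auxiliary process $\tilde X := \Psi(X_0',W',W^0)$. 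The core of the proof is then to identify $\mathcal{L}(\tilde X_t \mid W^0)$ with $\mu_t$: since the joint law of $(X_0',W')$ coincides with that of $(X_0,W)$ and both pairs are $\mathbb{P}$-independent of $W^0$, conditioning on $W^0$ simply integrates out the factor $\Omega^1$ in the canonical product set-up of Definition~\ref{def:setup}, and a Fubini / disintegration argument yields
\begin{equation*}
\mathcal{L}\bigl(\tilde X_t \,\big|\, W^0\bigr) = \mathcal{L}\bigl(X_t \,\big|\, W^0\bigr) = \mu_t \qquad \mathbb{P}^0\text{-a.s.}
\end{equation*}

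Once this identification is in hand, $\tilde X$ satisfies the full McKean--Vlasov equation driven by $(X_0',W',W^0)$ --- its own $W^0$-conditional law is precisely the measure parameter appearing in its coefficients --- so by the strong uniqueness clause of Proposition~\ref{prop:existcoefflipschitz} one concludes $\tilde X = X'$, whence $\mu'_t = \mathcal{L}(X'_t \mid W^0) = \mathcal{L}(\tilde X_t \mid W^0) = \mu_t$ $\mathbb{P}^0$-a.s. for every $t\in[0,T]$. The step I expect to be the most delicate is the middle one: justifying rigorously that the conditional law of $\Psi_t(X_0,W,W^0)$ given $W^0$ depends only on the joint distribution of $(X_0,W)$ and not on its specific realisation. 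This is intuitively clear because $\Psi$ is a measurable functional and $(X_0,W)$ is independent of $W^0$, but turning this into a formal statement forces one to work explicitly on the canonical product space $\Omega^0 \times \Omega^1$ and to invoke Fubini. The other two ingredients --- strong well-posedness of the classical Lipschitz SDE with a frozen $W^0$-adapted random coefficient, and the final appeal to Proposition~\ref{prop:existcoefflipschitz} --- are by comparison routine.
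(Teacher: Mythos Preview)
The paper does not supply its own proof of this proposition: it is stated in Section~2.2 as part of the background material imported from \cite{carmona2018probabilistic} and \cite{carmona2016mean}, with the reader referred there for details. Your Yamada--Watanabe style argument --- representing the $\mathbb{F}^{W^0}$-adapted flow $\mu$ as a functional of $W^0$, freezing it to obtain a classical Lipschitz SDE with strong solution map $\Psi$, and then using the product structure $\Omega=\Omega^0\times\Omega^1$ together with equality in law of the private inputs to identify the conditional laws --- is precisely the argument underlying the cited reference (cf.\ the discussion around Theorem~1.33 in \cite{carmona2018probabilistic}), so your sketch is correct and aligned with the source the paper defers to.
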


This result is necessary for a rigorous formulation of the propagation of chaos result, by ensuring that the mean field repartition $\mu_t$, although random, is common to all typical particles. Classically, ``propagation of chaos" refers to the convergence in a suitable sense of the particle system \eqref{eq:commonnoisePS} to the solution of the McKean Vlasov SDE \eqref{eq:commonnoise}. The probabilistic set-up for this result is made to enable the use of a coupling argument: it is basically a set-up in which it is possible to construct solutions of both equations \eqref{eq:commonnoise} and \eqref{eq:commonnoisePS} directed by the same Brownian motions and is defined as follows.

\begin{defi}(Set-up for the propagation of chaos)\label{def:setuppropchaos}.  Let $(\Omega^0,\mathcal{F}^0,\mathbb{P}^0)$ and $(\Omega^1,\mathcal{F}^1,\mathbb{P}^1)$ be two complete probability spaces endowed with two complete and right continuous filtrations $\mathbb{F}^0=(\mathcal{F}^0_t)_{0 \leq t \leq T}$ and $\mathbb{F}^1=(\mathcal{F}^1_t)_{0 \leq t \leq T}$. Let $W^0=(W_t^0)_{0\leq t \leq T}$ be a $p$ dimensional $\mathbb{F}^0$ Brownian motion defined on $\Omega^0$ and let $(W^n)_{n\geq 1}$ be a sequence of independent $d$ dimensional $\mathbb{F}^1$ Brownian motion defined on $\Omega^1$. Let $(X_0^n)_{n \geq 1}$ be a sequence of independent and identically distributed $\mathcal{F}_0^1$ random variables defined on $\Omega^1$, that verify $\mathbb{E}(|X_0^1|^\alpha)< \infty$ for some $\alpha \geq 1$. Let $\Omega$ be the canonical probabilistic set-up associated to the two probabilistic spaces $(\Omega^0,\mathcal{F}^0,\mathbb{P}^0)$ and $(\Omega^1,\mathcal{F}^1,\mathbb{P}^1)$ and $b,\sigma,\sigma^0$ coefficients verifying Assumption \ref{hyp:lipschitzbornés}.

Because of Proposition \ref{prop:existcoefflipschitz}, it is possible to define for any $n\geq 1$ $\bar{X}^n=(\bar{X_t}^n)_{0\leq t \leq T}$ the solution of the conditional McKean Vlasov equation defined on $(\Omega,\mathcal{F},\mathbb{F},\mathbb{P},X_0^n,W^n,W^0)$ and $\bar{\mu}_t^n$ the regular flow of conditional distribution associated. By Proposition \ref{prop:uniqueMFD}, for all $n\geq 1$, the process $(\bar{\mu}^n)$ is indistinguishable from $(\bar{\mu}^1)$, so we can use the notation $\bar{\mu}$ for the mean field distribution common for all typical particles $\bar{X}^n$.

Because the coefficients $b,\sigma,\sigma^0$ are Lipschitz continuous, the classical theory of SDE ensures that for all $N\geq 1$, the particle system \eqref{eq:commonnoisePS} made of $N$ particles admits unique strong solution $(X^{n,N})_{1\leq n\leq N}$ on the probabilistic set-up $( \Omega,\mathcal{F},\mathbb{F},\mathbb{P},(X^{n,N}_0)_{1\leq n\leq N}, (W^k)_{0\leq k\leq N})$. 

We define $ \pi^N_t=\pi((X_t^{n,N})_{1\leq n \leq N})=\frac{1}{N}\sum_{n=1}^{N}\delta_{X_t^{n,N}}$ the empirical distribution of the particle's system.
\end{defi}

It is then possible to state the propagation of chaos result for conditional McKean Vlasov equations with Lipschitz continuous coefficients:

\begin{prop}(Propagation of chaos)
   \label{prop:propchaos}
   Within the above framework:
    \begin{equation}
        \underset{N\rightarrow \infty}{\text{lim}} \quad \underset{1\leq n \leq N}{\text{max}} \quad \mathbb{E}[\underset{0\leq t \leq T} \sup|X_t^{n,N}-\bar{X}^n_t|^\alpha]=0 
    \end{equation}
    and 
     \begin{equation}
        \underset{N\rightarrow \infty}{\text{lim}} \quad \underset{0\leq t \leq T}{\text{sup}} \quad \mathbb{E}[ \mathcal{W}_\alpha(\pi_t^N,\bar{\mu}_t)^\alpha]=0\,. 
    \end{equation}
\end{prop}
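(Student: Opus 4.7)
The plan is to run a coupling argument between the mean-field particles $\bar X^n$ and the interacting particles $X^{n,N}$, exploiting the fact that by construction they share the same initial condition $X_0^n$, the same personal Brownian motion $W^n$, and the same common noise $W^0$. The only difference between their SDEs lies in the substitution of the true conditional law $\bar\mu_t$ by the empirical measure $\pi^N_t$, so the Lipschitz continuity of $b,\sigma,\sigma^0$ will reduce propagation of chaos to controlling $\mathcal{W}_\alpha(\pi^N_t,\bar\mu_t)$. By exchangeability of both systems (the $\bar X^n$ being i.i.d.\ conditionally on $\mathcal F^0$, and the particle system \eqref{eq:commonnoisePS} being symmetric in its indices), the quantity $R^N(t):=\mathbb E[\sup_{s\le t}|X_s^{n,N}-\bar X_s^n|^\alpha]$ does not depend on $n\in\{1,\dots,N\}$, so the $\max$ in the statement is irrelevant and it suffices to bound $R^N(T)$.

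Subtracting the two SDEs, using Jensen to reduce to the case $\alpha\ge 2$ if needed, and then applying the Burkholder-Davis-Gundy inequality to the martingale parts together with Assumption~\ref{hyp:lipschitzbornés} on all three coefficients, I would obtain an estimate of the form
\begin{equation*}
R^N(t)\le C_T\int_0^t R^N(s)\,\mathrm ds + C_T\int_0^t \mathbb E\!\left[\mathcal W_\alpha(\pi^N_s,\bar\mu_s)^\alpha\right]\mathrm ds\,.
\end{equation*}
To handle the second integrand, I introduce the intermediate empirical measure $\bar\pi^N_t:=\frac1N\sum_{n=1}^N\delta_{\bar X_t^n}$ and use the triangle inequality together with convexity of $r\mapsto r^\alpha$:
\begin{equation*}
\mathcal W_\alpha(\pi^N_t,\bar\mu_t)^\alpha \le 2^{\alpha-1}\bigl(\mathcal W_\alpha(\pi^N_t,\bar\pi^N_t)^\alpha+\mathcal W_\alpha(\bar\pi^N_t,\bar\mu_t)^\alpha\bigr)\,.
\end{equation*}
The first term is bounded by the synchronous coupling $\frac1N\sum_n\delta_{(X_t^{n,N},\bar X_t^n)}$, which yields $\mathbb E[\mathcal W_\alpha(\pi^N_t,\bar\pi^N_t)^\alpha]\le R^N(t)$. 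Setting $\varepsilon_N:=\sup_{t\in[0,T]}\mathbb E[\mathcal W_\alpha(\bar\pi^N_t,\bar\mu_t)^\alpha]$, Grönwall's lemma then gives $R^N(T)\le C'_T\varepsilon_N$ and in turn $\sup_t\mathbb E[\mathcal W_\alpha(\pi^N_t,\bar\mu_t)^\alpha]\le C''_T\varepsilon_N$, so that both conclusions of the proposition follow from $\varepsilon_N\to 0$.

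The main obstacle is thus establishing $\varepsilon_N\to 0$. The key observation is that, conditionally on $\mathcal F^0$, the particles $(\bar X^n_t)_{n\ge1}$ are i.i.d.\ with common conditional law $\bar\mu_t$, as a consequence of Proposition~\ref{prop:uniqueMFD} together with the independence of the personal noises $W^n$. Hence
\[\mathbb E\!\left[\mathcal W_\alpha(\bar\pi^N_t,\bar\mu_t)^\alpha\right]=\mathbb E\!\Bigl[\mathbb E\!\left[\mathcal W_\alpha(\bar\pi^N_t,\bar\mu_t)^\alpha\,\big|\,\mathcal F^0\right]\Bigr],\]
and one may apply a Fournier-Guillin type quantitative bound for empirical measures of i.i.d.\ samples to the inner conditional expectation, at the cost of slightly strengthening the integrability assumption on $X_0$. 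Combined with the uniform-in-$t$ moment bound on $\bar\mu_t$ coming from the strong well-posedness in Proposition~\ref{prop:existcoefflipschitz}, this turns the pointwise convergence into a uniform bound $\varepsilon_N\le C\,\eta(N)$ with $\eta(N)\to 0$, which completes the proof.
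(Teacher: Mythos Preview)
The paper does not give its own proof of this proposition: it is stated in Section~2.2 as background, and the introduction to Section~\ref{sec:definition} explicitly refers to \cite{carmona2018probabilistic,carmona2016mean} for the proofs. Your synchronous-coupling argument is precisely the standard Sznitman-type scheme used in those references (adapted to common noise by conditioning on $\mathcal F^0$), so the approach matches what the paper is implicitly invoking.

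One genuine gap in your sketch is the passage $\varepsilon_N\to0$. Appealing to Fournier--Guillin requires moments of order strictly larger than $\alpha$, which the proposition does not assume; you cannot ``slightly strengthen'' the hypothesis and still be proving the stated result. Since only convergence and no rate is claimed, the correct fix is to work at the path level: conditionally on $\mathcal F^0$, the processes $(\bar X^n_\cdot)_{n\ge1}$ are i.i.d.\ $C([0,T],\R^d)$-valued random variables, so the law of large numbers in Wasserstein space gives $\mathcal W_\alpha(\bar\Pi^N,\mathcal L(\bar X^1_\cdot\mid\mathcal F^0))\to0$ a.s., and the uniform moment bound $\mathbb E[\sup_{t\le T}|\bar X^1_t|^\alpha]<\infty$ from Proposition~\ref{prop:existcoefflipschitz} provides the uniform integrability needed to take expectations. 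Projecting at each $t$ then yields $\sup_{t\le T}\varepsilon_N(t)\to0$, which is what you need for the second display (for the first, dominated convergence on $\int_0^T\varepsilon_N(s)\,ds$ already suffices via your Gr\"onwall step). A smaller quibble: ``Jensen to reduce to $\alpha\ge2$'' goes the wrong way when $\alpha<2$; the BDG estimate still holds for $\alpha\in[1,2)$, but one must apply it in its form for exponents below $2$ rather than pretend $\alpha\ge2$.
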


To sum up, most of the classical results for McKean Vlasov equations with Lipschitz continuous coefficients still hold in the conditional case. The main specificity of the study of the conditional case consists in the choice of an ``admissible probabilistic set-up", which can be chosen as a specific canonical set-up in the Lipschitz case. Using the adapted framework, strong existence and uniqueness hold as well as propagation of chaos (in its usual form). In the following, we will be interested in evolutions where coefficients are not globally Lipschitz continuous (see Equation \eqref{eq:coeffsp} defining the stochastic moment processes dynamic). The resulting processes will not be well defined for all time $t \leq T$, but only up to a random blowing time in the general case (see for instance the example presented at the end of Section \ref{sec:derivation}). The objective of the next section will be to use a localization procedure to justify the well posed-ness of the equation despite the explosive behavior of its solution.

 \section{Definition of the stochastic moments dynamics}\label{sec:def-process}

The goal of this section is to justify that the SMD informally derived in Section~\ref{sec:derivation} is well defined for general observables $f=(f_1,..,f_p)$ and coefficients $a$, $s$ directing the moment dynamic, corresponding to the following singular conditional McKean Vlasov equation:

\begin{equation} \label{eq:stomomentdyn}
    dX_t=\tilde{b}_{\mu_t}(X_t)\dd t+ \tilde{\sigma}_{\mu_t}(X_t)\dd W_t  + b_{\mu_t}(X_t) \dd t +\sigma_{\mu_t}(X_t) \dd W_t^0\,,
\end{equation}
where $\mu_t$ stands for the conditional law of the particle $X_t$ knowing the common noise, the coefficients $\tilde{b}$ and $\tilde{\sigma}$ correspond to the coefficients of the McKean Vlasov equation associated to the Wasserstein gradient flow (for instance, in the examples in Sections~\ref{sec:exemples} and \ref{sec:numerique}, these coefficients will be defined as $\tilde{b}_\mu(x)=-\nabla U(x) -\mu \ast \nabla W(x)$ and $\tilde{\sigma}_\mu(x)=constant$) and the coefficients $b$ and $\sigma$ correspond to coefficients of the term that will play the role of the noise in $\mathcal{P}(\mathbb{R}^d)$, as determined in Section~\ref{sec:derivation}:
\begin{equation} \label{eq:defcoefs}
     \left\{
        \begin{array}{lll}
        \sigma_{\mu}(x) &=& \nabla f (x)\po \mu[\nabla f^T \nabla f]\pf ^{-1} s(\mu(f))\\
    
        b_{\mu}(x) &=& \nabla f (x) \po \mu[\nabla f^T \nabla f]\pf ^{-1} [a(\mu(f))-\frac{1}{2}\mu(\sigma_{\mu}\sigma_{\mu}^T :\nabla^2 f)] \,.
    \end{array}\right. 
\end{equation}

\medskip{}

 As explained at the end of Section \eqref{sec:derivation}, it is not possible to ensure the definition of the process for all $t\leq T$ in the general case: for instance it is not possible to define a probability measure valued process with a negative second order moment. This is characterized by the fact that in the general case the coefficients $b$ and $\sigma$ directing the evolution of the McKean Vlasov particles (Equation \eqref{eq:coeffsp}) can be singular. For instance, considering the simple example in dimension $d=1$, where the stochastic moment process is defined so that its second moment order is a Brownian motion ($d = p = 1$, $\tilde{b}=\tilde{\sigma}=0$, $f (x) = x^2$, $a(z) = 0$ and $s(z) = 1$ in Equation \eqref{eq:coeffsp}), the coefficient $\sigma_\mu(x)$ is defined as:
$$\sigma_\mu(x)=\frac{4x}{m_2(\mu)}$$ where $m_2(\mu)=\int_\R x^2\mu(dx)$ is the second order moment of $\mu$. This coefficient is singular in $\mu=\delta_0$, so one cannot directly use the result of existence and uniqueness for Lipschitz coefficients. However, it is still possible to define the process as long as its conditional distribution $\mu_t$ differs from $\delta_0$ (because $\sigma$ is Lipschitz outside of every neighborhood of $\delta_0$), following a standard localization procedure as for explosive processes in finite dimension,   based on the construction on a sequence of regular truncations of the singular coefficients built to coincide with the original coefficients outside of an increasingly smaller neighborhood of the singularity. 

Depending on the hypothesis made on the observable $f$, we will define a parameter $\alpha \geq 2$ that will correspond to the index of Wasserstein space in which the solution is defined. The stronger the assumptions on the observable, the closer the parameter will be to $2$, and the stronger the result will be. Indeed, the dynamic will be well defined until a blowing time corresponding partially to the exit time from the Wasserstein space on which the solution is defined (the moment of the characteristic order may explode). Because of the natural inclusion between the Wasserstein spaces, the smaller the parameter $\alpha$ will be, the longer the solution will be defined.

The result will be stated under both strong hypothesis (that will be of practical use in the examples considered in Sections~\ref{sec:exemples} and \ref{sec:numerique}), and a rather general one that apply to a broad class of observables of interest in practice.
\begin{assu} \label{hyp:observableregu1}
    The observable \( f \in \mathcal{C}^2(\mathbb{R}^d, \mathbb{R}^p) \) must satisfy one of the following assumptions.
    
    \begin{enumerate}
        \item[(A.1)]\label{assu:hyp1} \( f \) is quadratic, meaning that for all \( k \in \{1, \dots, p\} \), there exist a matrix \( A \), a vector \( B \), and a constant \( C \) such that:
        \[
        f_k(x) = x^T A x + B^T x + C.
        \]
        
        \item[(A.2)]\label{assu:hyp2} For all \( k \in \{1, \dots, p\} \), the functions \( f_k \), \( \nabla f_k \), and \( \nabla^2 f_k \) are Lipschitz and bounded.
        
        \item[(A.3)]\label{assu:hyp3} The functions \( f \), \( \nabla f \) and for all \( k \in \{1, \dots, p\} \), \( \nabla^2 f_k \) are locally Lipschitz with at most a polynomial growth, meaning that there exist constants \( \alpha_1, \alpha_2, \alpha_3 \geq 0 \) and \( C_1, C_2, C_3 \geq 0 \) such that for all \( x, y \in \mathbb{R}^d \),
        
        \begin{equation*} 
        \left\{
        \begin{array}{ll}
            |f(x) - f(y)| &\leq C_1 |x - y| (1 + |x|^{\alpha_1} + |y|^{\alpha_1}), \\
            |\nabla f(x) - \nabla f(y)| &\leq C_2 |x - y| (1 + |x|^{\alpha_2} + |y|^{\alpha_2}), \\
            |\nabla^2 f_k(x) - \nabla^2 f_k(y)| &\leq C_3 |x - y| (1 + |x|^{\alpha_3} + |y|^{\alpha_3}).
        \end{array}
        \right.
        \end{equation*}
    \end{enumerate}

    If \( f \) satisfies Assumption (A.1) or (A.2) we set \( \alpha = 2 \). If \( f \) satisfies Assumption (A.3), we set:
    \[
    \alpha := \max(\alpha_1 + 1, 2\alpha_2 + \alpha_3 + 3).
    \]

    Thus, depending on which case holds, the appropriate value of \( \alpha \) is used.
\end{assu}

In addition to these hypothesis on the observable $f$, we require some regularity on the coefficients $a$ and $s$, which correspond respectively to the drift and diffusion matrix of the SDE satisfied by $\mu_t(f)$ in the case where $\tilde{b}=\tilde{\sigma}=0$, as discussed in Section~\ref{sec:derivation}.

\begin{assu}\label{hyp:coeffaets}
    The coefficients $a:\R^p\rightarrow \R^p$ and $s:\R^p\rightarrow \R^{p\times p}$ are supposed to be locally Lipschitz and bounded outside of a singularity set $\mathcal{S}$. Formally, we assume that it exists $\mathcal{S}\subset{\R^p}$ such that for all $\epsilon>0$ and $K\geq 0$, the restrictions of $a$ and $s$ to $(\mathcal{S}_\epsilon)^c \cap B(0,K)$ are Lipschitz continuous and bounded, with $(\mathcal{S}_\epsilon)^c:=\{z \in \R^p, \ d(z,\mathcal{S} )>\epsilon \}$.
\end{assu}

Finally, we need some control on the coefficients $\tilde{b}$ and $\tilde{\sigma}$.

\begin{assu} \label{hyp:driftetdiffusionenplus}
    For $\alpha$ defined in Assumption \ref{hyp:observableregu1}, the coefficients $\tilde{b}:\R^d\times \mathcal{P}_\alpha(\R^d) \rightarrow \R^d$ and $\tilde{\sigma}: \R^d \times \mathcal{P}_\alpha(\R^d)\rightarrow \R^{d\times d}$ satisfy (B.1) and either (B.2) or (B.2'):
    \begin{enumerate}
        \item[(B.1)] $\tilde{b}$ and $\tilde{\sigma}$ are locally Lipschitz,  which means that for all $M>0$, it exists $C_M\geq 0$ such that for all $x,x'\in B(0,M)$ and $\mu,\mu'\in B_{\mathcal W_\alpha}(\delta_0,M)$:
    \begin{equation*}
        |\tilde{b}(x,\mu)-\tilde{b}(x',\mu')|+|\tilde{\sigma}(x,\mu)-\tilde{\sigma}(x',\mu')|\leq C_M\bigl(|x-x'|+\mathcal W_\alpha(\mu,\mu')\bigr)
    \end{equation*}
    \item[(B.2)] $|\tilde{b}|^2$ and $|\tilde{\sigma}|^2$ have at most a polynomial growth of order $\alpha$ in position: for all $M>0$, it exists $C_M\geq 0$ such that for all $x\in \R^d$ and $\mu\in \mathcal{P}_\alpha(\R^d)$ such that $m_\alpha(\mu) \leq M$:
    \begin{equation*}
        |\tilde{b}(x,\mu)|^2+|\tilde{\sigma}(x,\mu)|^2 \leq C_M ( 1+ |x|^\alpha)
    \end{equation*}
    with $m_\alpha(\mu)=\int_{\R^d} |x|^\alpha \mu(dx)$
    \item[(B.2')] Let $\beta\geq0$ such that $\nabla f$ as at most a polynomial growth of order $\beta$. Then, $\tilde{\sigma}$ must have at most a controlled polynomial growth and $\tilde{b}$ must be coercive and must have at most a polynomial growth of order $q>\alpha$: it exists $q>\alpha$ such that for all $M>0$, it exists $c_M, \, C_M, \, \bar{C}_M > 0$ such that for all $x\in \R^d$ and $\mu\in \mathcal{P}_\alpha(\R^d)$ such that $m_\alpha(\mu) \leq M$:

    \begin{equation*} 
        \left\{
        \begin{array}{lll}
            |\tilde{\sigma}(x,\mu)|^2 \leq C_M ( 1+ |x|^{2 \beta}) \\
            |\tilde{b}(x,\mu)|\leq C_M ( 1+ |x|^{q})\\
            \langle \tilde{b}_\mu(x), x\rangle \leq -c_M|x|^{q}+ \bar{C}_M
        \end{array}
        \right.
        \end{equation*}
    Note that the value of $\beta$ only depend on which assumption is verified by $f$, and respectively for (A.1), (A.2) and (A.3), $\beta=1$ $\beta=0$ and $\beta=\alpha_2+1$
    \end{enumerate}

\end{assu}

 Defining a solution of an SDE up to a stopping time $\tau$ is standard in the literature \cite{hsu2002stochastic}, and this notion can easily be adapted for conditional McKean Vlasov equations. 

\begin{defi} \label{def:soljusqueT.A} (Strong Solution up to a stopping time) 
    Given a canonical probability set-up, an $\mathcal{F}^1_0$ measurable initial condition $X_0$ with values on $\R^d$ verifying $\mathbb{E}(|X_0|^\alpha)< \infty$, a $\mathbb{F}^{W^0}$ stopping time $\tau$ and $T\geq 0$, we call a strong solution to the conditional McKean Vlasov equation with coefficients $b, \tilde{b}, \sigma,   \tilde{\sigma}$ up to the stopping time $\tau$ an $\mathbb{F}^{X_0,W,W^0}$ adapted process $(X_{t\wedge \tau})_{0\leq t \leq T}$ with continuous paths  in $\R^d$ verifying the following conditions:
    \begin{enumerate}
        \item It exists a sequence of $\mathbb{F}^{W^0}$ stopping times $(\tau^M)_{M\in \N^*}$ such that $\tau^M \uparrow \tau$ a.s
        \item For all $M \in \N^*$, $\mathbb{E}[\hspace{2pt} \int_0^{T\wedge \tau^M} (|b(X_t,\mu_t)|+ |\tilde{b}(X_t,\mu_t)|+ |\sigma(X_t,\mu_t)|^2+|\sigma^0(X_t,\mu_t)|^2)dt ] < \infty$
        \item For all $M \in \N^*$, $\mathbb{P}$ a.s, for all $t\in [0,T]$:  
        \begin{multline*}
            X_{t\wedge \tau^M}=X_0+\int_0^{t\wedge \tau^M} \big( b(X_s,\mu_s)+\tilde{b}(X_t,\mu_t) \big)ds+\int_0^{t\wedge \tau^M}\sigma(s,X_s,\mu_s)dW_s\\+\int_0^{t\wedge \tau^M}\sigma^0(s,X_s,\mu_s)dW^0_s
        \end{multline*}
        
    \end{enumerate}
    and for all $M\in \N^*$, $(\mu_{t \wedge \tau^M})_{0\leq t \leq T}$ is $\mathbb{F}^{W^0}$ adapted and is the regular flow of conditional distribution associated to $(X_{t\wedge \tau^M})_{0\leq t \leq T}$ in $\mathcal{P}_\alpha(\R^d)$ (Proposition \ref{prop:existcondlaw}), with $\mathbb{F}^{X_0,W,W^0}$ (resp. $\mathbb{F}^{W^0}$) be the right continuous and completed augmentation of the filtration generated by $X^0,W,W^0$ (resp. $W^0$).
\end{defi}

It is also possible to adapt the notion of strong uniqueness for explosive solutions.

\begin{defi}(Strong uniqueness up to a stopping time)
    Given a canonical probabilistic set-up $(\Omega,\mathcal{F},\mathbb{F},\mathbb{P})$ and a $\mathbb{F}$ stopping time $\tau$, we say that strong uniqueness holds up to $\tau$ for a conditional McKean Vlasov equation with coefficients $b,\sigma,\sigma^0$ of order $\alpha\geq1$ if for any initial condition $X_0$ verifying $\mathbb{E}(|X_0|^\alpha)< \infty$, if $(X_{t\wedge \tau})_{0\leq t \leq T}$ and $(\tilde{X}_{t\wedge \tau})_{0\leq t \leq T}$ are two solutions respectively associated with the sequences of $\mathbb{F}$ stopping times $(\tau_M)_{M \in \N^*}$ and  $(\tilde{\tau}_M)_{M \in \N^*}$, then $$\mathbb{P} \text{    a.s, for all   } t \in [0,T] \text{      and   } M \in \N^*, \quad X_{t\wedge \tau^M \wedge\tilde{\tau}^M}=\tilde{X}_{t\wedge \tau^M \wedge\tilde{\tau}^M} $$
\end{defi}

Whenever strong existence and uniqueness holds on a canonical set-up $(\Omega, \mathcal{F},\mathbb{F}, \mathbb{P})$ until a stopping time $\tau$, we say that the equation is strongly uniquely solvable until $\tau$ on the set-up $(\Omega, \mathcal{F},\mathbb{F}, \mathbb{P})$. The goal of this section is to justify the well posed-ness of the stochastic moment dynamic up to a blowing time $\tau$, and the following result is the main result of this section.

\begin{prop} \label{prop:stochasticmomentswelldefined}
    Suppose that the observable $f$ verifies Assumption~\ref{hyp:observableregu1},that the coefficients $a$ and $s$ verify Assumption~\ref{hyp:coeffaets} and that $\tilde{b}$ and $\tilde{\sigma}$ verify Assumption \ref{hyp:driftetdiffusionenplus}. Let $\beta\geq 0$ such that $\nabla f $ has at most a polynomial growth of order $\beta$ (Assumption \ref{hyp:driftetdiffusionenplus}). Suppose that the initial condition has finite moments of all orders\footnote{this condition can be weakened if necessary but is made to simplify the redaction of the proof}.
    \medskip{}
    
    Then, given a canonical probabilistic set-up $(\Omega, \mathcal{F}, \mathbb{F},\mathbb{P})$, and an $\mathcal{F}_0^1$ measurable initial condition , and the coefficients $b:\R^d\times \mathcal{P}_\alpha(\R^d) \rightarrow \R^d$ and $\sigma:\R^d\times \mathcal{P}_\alpha(\R^d) \rightarrow \R^{d\times p}$ defined by equation \eqref{eq:defcoefs}, the conditional McKean Vlasov equation \eqref{eq:stomomentdyn} is strongly uniquely solvable until the blowing time $\tau$ defined as $\tau=\underset{M\rightarrow \infty}{\lim}\tau^M$ and:
\begin{equation}
\label{def:tauM_explosion}
    \tau^{M}=\inf\left\{t\geq 0,  \quad \mu_t(|x|^\alpha ) \geq M , \quad d(\mu_t(f), \mathcal{S})\leq \frac{1}{M}, \quad \det(\mu_t(\nabla f ^T \nabla f))\leq \frac{1}{M} \right\}\,.
\end{equation}

\end{prop}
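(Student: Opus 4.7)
The plan is to extend, via a localization procedure, the strong existence and uniqueness result for Lipschitz coefficients (Proposition~\ref{prop:existcoefflipschitz}) to the present singular setting. First, I would replace the singular coefficients $b_\mu, \sigma_\mu$ from \eqref{eq:defcoefs} (as well as the only locally Lipschitz coefficients $\tilde b, \tilde \sigma$) by truncated versions $b^M, \sigma^M, \tilde b^M, \tilde \sigma^M$ that coincide with the originals on the ``good'' region
\[ \mathcal{G}^M = \left\{\mu \in \mathcal{P}_\alpha(\R^d) : m_\alpha(\mu) < M,\ d\po\mu(f), \mathcal{S}\pf > 1/M,\ \det\po\mu[\nabla f^T \nabla f]\pf > 1/M \right\} \]
and are bounded and globally Lipschitz continuous from $\R^d\times \mathcal{P}_\alpha(\R^d)$ into the appropriate codomain. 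A concrete construction uses smooth cutoffs of $m_\alpha(\mu)$, $d(\mu(f),\mathcal{S})$ and $\det(\mu[\nabla f^T\nabla f])$, composed with a compactly supported truncation in the $x$ variable at scale $M$.

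For each $M$, Proposition~\ref{prop:existcoefflipschitz} then yields a unique strong solution $(X^{M}_t)_{0\leq t\leq T}$ on the canonical probabilistic set-up, together with its $\mathbb{F}^{W^0}$-adapted flow of conditional distributions $(\mu^{M}_t)_{0\leq t\leq T}$ in $\mathcal{P}_\alpha(\R^d)$. I would then define
\[ \tau^M = \inf\left\{t\geq 0 : \mu^{M}_t \notin \mathcal{G}^M\right\}, \]
which is an $\mathbb{F}^{W^0}$-stopping time by continuity in time of $\mu^M$ and measurability of the three defining quantities. By strong (pathwise) uniqueness for the truncated Lipschitz SDE, the solutions $X^{M}$ and $X^{M'}$ with $M' \geq M$ must agree on $[0, \tau^M\wedge \tau^{M'}]$; since on this interval both stay in $\mathcal{G}^M$, where the truncated coefficients of order $M$ and $M'$ coincide with the untruncated ones, one deduces $\tau^{M'}\geq \tau^M$. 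Gluing the solutions gives a process defined on $[0,\tau)$ with $\tau = \lim_M \tau^M$, and this yields the desired strong solution up to the blowing time in the sense of Definition~\ref{def:soljusqueT.A}. Strong uniqueness up to $\tau$ follows by an identical argument: any other solution $\tilde X$ with localizing sequence $(\tilde \tau^M)$ satisfies the same truncated Lipschitz SDE as $X^M$ up to $\tau^M\wedge \tilde\tau^M$, so the two solutions coincide on this interval by Proposition~\ref{prop:existcoefflipschitz}.

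The main obstacle is verifying that the truncated coefficients actually satisfy Assumption~\ref{hyp:lipschitzbornés} with respect to the Wasserstein distance $\mathcal{W}_\alpha$. The delicate points are: (i) the map $\mu\mapsto \mu[\nabla f^T \nabla f]$ has Lipschitz modulus controlled by the polynomial growth of $\nabla f^T\nabla f$ (of order at most $2\beta$); (ii) the map $A\mapsto A^{-1}$ is Lipschitz only on the region where $\det A$ is bounded away from zero, which is exactly what the last condition in $\mathcal{G}^M$ provides; (iii) the drift involves the composite map $\mu\mapsto \mu(\sigma_\mu\sigma_\mu^T:\nabla^2 f)$, combining (i), (ii) and an additional $\nabla^2 f$ of polynomial growth. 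The specific value $\alpha = \max(\alpha_1+1,\, 2\alpha_2+\alpha_3+3)$ from Assumption~\ref{hyp:observableregu1}~(A.3) is precisely calibrated so that all these maps extend to $\mathcal{W}_\alpha$-Lipschitz functions on $\mathcal{G}^M$, using the standard Kantorovich-type estimate $|\mu(g)-\nu(g)| \lesssim (1+m_\alpha(\mu)+m_\alpha(\nu))\mathcal{W}_\alpha(\mu,\nu)$ valid for $g$ of polynomial growth of order $\alpha-1$. An auxiliary Gr\"onwall argument on $\mathbb{E}[m_\alpha(\mu^M_t)]$, exploiting either (B.2) or the coercivity in (B.2'), ensures that the truncated solution retains the expected moment bounds, so that $\tau^M$ is genuinely read off the mean-field $\mu^M$ rather than being triggered artificially by the cutoff of $\tilde b, \tilde \sigma$. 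Once these Lipschitz estimates are in place, the remainder of the argument reduces to the stitching procedure sketched above.
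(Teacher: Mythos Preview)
Your overall strategy---localization via truncated Lipschitz coefficients, then gluing---matches the paper's, and your identification of the Lipschitz verification (points (i)--(iii)) as the main technical work is correct and corresponds to Lemma~\ref{lem:regucoefsursomaines}. However, there is a genuine gap in the consistency step.

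You truncate in $x$ at scale $M$ (this is unavoidable: $b_\mu(x)=\nabla f(x)\cdot(\text{function of }\mu)$ has the polynomial growth of $\nabla f$ in $x$, so without an $x$-cutoff the coefficients are not uniformly Lipschitz in $(x,\mu)$ and Proposition~\ref{prop:existcoefflipschitz} does not apply). But your stopping time $\tau^M$ only records when the \emph{measure} $\mu^M_t$ exits $\mathcal G^M$; it does not see the event $|X^M_t|\geq M$. Hence on $[0,\tau^M]$ the level-$M$ and level-$M'$ truncated coefficients need not agree along the path $(X^M_t,\mu^M_t)$, and strong uniqueness does not give $X^M=X^{M'}$ on that interval. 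Your closing remark about a Gr\"onwall bound on $\mathbb E[m_\alpha(\mu^M_t)]$ does not close this gap: controlling the conditional law $\mu^M_t=\mathcal L(X^M_t\mid\mathcal F^0)$ gives no pathwise control on the individual particle $|X^M_t|$, which is what the $x$-cutoff depends on. Adding $|X^M_t|\geq M$ to the definition of $\tau^M$ is not an option either, since $\tau^M$ must be an $\mathbb F^{W^0}$-stopping time for Definition~\ref{def:soljusqueT.A} (and for the conditional Fubini arguments downstream).

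The paper resolves this with a \emph{two-parameter} localization $\mathcal D^{K,M}=B(0,K)\times\Lambda^M$ and two stopping times $\gamma^{K,M}$ (exit of $X^{K,M}$ from $B(0,K)$, an $\mathbb F^{X_0,W,W^0}$-stopping time) and $\tau^{K,M}$ (exit of $\mu^{K,M}$ from $\Lambda^M$, an $\mathbb F^{W^0}$-stopping time). The key extra ingredient is a moment bound on $\sup_t|X^{K,M}_{t\wedge\tau^{K,M}\wedge\gamma^{K,M}}|$ that is \emph{uniform in $K$} (Lemma~\ref{lem:boundmoments}); Markov's inequality and Borel--Cantelli then yield that almost surely $\gamma^{K,M}>\tau^{K,M}\wedge T$ for all large $K$ (Lemma~\ref{lem:relationsT.A}), so the $x$-localization becomes inactive and one can pass to the limit $K\to\infty$ first, leaving a localization purely in $M$ with $\mathbb F^{W^0}$-stopping times. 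Your sketch is missing exactly this decoupling of the particle-level and measure-level localizations.
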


This result justifies that the process $(X_t,\mu_t)$ can be defined as long as it does not explode and does not reach the singularity set of the coefficients $b$ and $\sigma$. 

\medskip{}

The proof can be summarized as follows.

\begin{enumerate}
    \item We will define a sequence of increasingly larger domains $\mathcal{D}^{K,M}\subset \R^d \times \mathcal{P}_\alpha(\R^d)$ in which the coefficients $b$ and $\sigma$ will be Lipschitz and bounded. Each domain will have the form $\mathcal{D}^{K,M}=B(0,K)\times \Lambda^M$, with $B(0,K)$ a Euclidean ball on $\R^d$ and $\Lambda^M$ a bounded subset of $\mathcal{P}_\alpha(\R^d)$.
    \item For every domain $\mathcal{D}^{K,M}$, it will be possible to build regular coefficients $b^{K,M}$, $\tilde{b}^{K,M}$, $\sigma^{K,M}$ and $\tilde{\sigma}^{K,M}$  as truncations of the coefficients $b, \tilde{b}, \sigma$ and $\tilde{\sigma}$. These regular coefficients will coincide with their original versions on $\mathcal{D}^{K,M}$ and will be globally Lipschitz and bounded in $\R^d\times \mathcal{P}_\alpha(\R^d)$, ensuring strong existence and uniqueness of a solution $X_t^{K,M},\mu_t^{K,M} $ to the conditional McKean Vlasov equation directed by the coefficients $b^{K,M}$, $\tilde{b}^{K,M}$, $\sigma^{K,M}$ and $\tilde{\sigma}^{K,M}$.
    \item We can then define the sequences of exit times of $\mathcal{D}^{K,M}$ as the infimum of two stopping times $\gamma^{K,M}$ and $\tau^{K,M}$, representing respectively the exit times of $X^{K,M}_t$ from $B(0,K)$ and $\tau^{K,M}$ the exit time of $\mu^{K,M}_t$ from $\Lambda^M$. We will see that when $K$ tends to infinity, $\gamma^{K,M}> \tau^{K,M}$ a.s, meaning that the blowing time of the system can be characterized by the blowing time of the mean field limit.
    \item We can finally use a localization procedure to extend the solutions $X_t^{K,M},\mu_t^{K,M}$ until the blowing time $\tau$ expressed as a limit of stopping times $\tau^M$ only depending on the common noise. 
    
\end{enumerate}

As explained in the introduction and on the previous section, it is well known that in the non explosive case the flow of conditional laws $(\mu_t)$ solves the stochastic Fokker-Plank equation \eqref{eq:stochasticFP}. In our study, we will see that this result is also true up to a stopping time $\tau^M$ ensuring that the solution of the conditional McKean Vlasov equation has not exploded yet and is a corollary of the well posed-ness of the equation \eqref{eq:stomomentdyn}. It is worth noticing that this result is true because the sequence $(\tau^M)_{M\in \N^*} $ reducing the solution is made of $\mathbb{F}^{W^0}$ stopping times, and so the conditional Fubini theorem ensures that: $$(\mathcal{L}(X_{t\wedge \tau^M} | \mathcal{F}^0))_{0\leq t \leq T}=(\mathcal{L}(X_{t} | \mathcal{F}^0))_{0\leq t \leq \tau^M \wedge T}$$ The result won't be true if the stopping times could depend on the private sources of noise, even for the classical McKean Vlasov equations, as explained in \cite{hong2024mckean}. Then, under a weak assumption on the interaction kernel $W$ and the potential $U$ ensuring that the associated drift verify Assumption \ref{hyp:driftetdiffusionenplus}, the flow of conditional laws defined in \ref{eq:stomomentdyn} will be solution to the associated stochastic Fokker Plank equation. This equation can be interpreted as noisy version of the Wasserstein gradient flow associated to $W$ and $U$. The perturbation will correspond to the effect of the stochastic moment dynamic on the evolution of the flow.

\medskip

In order to ensure that coefficients $\tilde{b}$ and $\tilde{\sigma}$ verify Assumption \ref{hyp:driftetdiffusionenplus} in the context of the granular media equation, the associated potential $U$ and interaction kernel $W$ must verify the following assumption:

\begin{assu} \label{hyp:potentiels}
    $\nabla W$ and $\nabla U$ are locally Lipschitz, and verify one of the following assumptions:

    \begin{enumerate}
        \item[(C)] $\nabla W$ and $\nabla U$ have at most a polynomial growth of order $\alpha$, which means that there exist $C\geq0$ such that for all $x\in \R^d$:
        \begin{equation*}
            |\nabla U(x)| +|\nabla W(x)| \leq C(1+|x|^\alpha)
        \end{equation*}
        \item[(C')]  Let $\beta\geq0$ such that $\nabla f$ as at most a polynomial growth of order $\beta$. Then $\nabla U$ is coercive of order $q$ and $\nabla W$ as at most a polynomial growth of order strictly inferior to $q$, which means that there is $\lambda,C>0$ and $0\leq q' < q$ such that for all $x\in \R^d$:
        \begin{equation*}
        \langle x, \nabla U(x) \rangle \geq \lambda |x|^{q}-C \quad \text{and} \quad |\nabla W(x)| \leq C(1+|x|^{q'})
        \end{equation*}
    \end{enumerate}
\end{assu}
This assumption ensures that the coefficients $\tilde{b}$ and $\tilde{\sigma}$ associated to the granular media equation \eqref{eq:granularmedia}, verify the conditions of Assumption \ref{hyp:driftetdiffusionenplus} (depending on which assumption is verified).

 \medskip{}

 We can now present the result that formally states that the stochastic Fokker-Plank equation associated to the Stochastic Moment Dynamic \eqref{eq:stomomentdyn} is well defined up to the blowing time $\tau$ defined in Proposition \ref{prop:stochasticmomentswelldefined}. In particular, the coefficients associated to the double well potential and quadratic interaction verify assumption (C') with $q=4$.
 
\begin{cor} \label{cor:edps}
    Suppose that the observable $f$ verifies Assumption~\ref{hyp:observableregu1}, that the coefficients $a$ and $s$ verify Assumption~\ref{hyp:coeffaets}, and that the interaction kernel $W$ and the potential $U$ verify Assumption~\ref{hyp:potentiels}. Let $\tilde{b}$ and $\tilde{\sigma}$ be the coefficients associated to the granular media equation and defined by:
    \begin{equation} \label{eq:tildebetsigma}
    \tilde{b}, \tilde{\sigma}:\R^d\times  \mathcal{P}_{\alpha}(\R^d)\rightarrow \R^d \hspace{1cm} \tilde{b}(x,\mu)=-\nabla U(x) -\mu \ast \nabla W(x) \hspace{1cm} \tilde{\sigma}(x,\mu)=\tilde{\sigma}
\end{equation}    
    Let $X,\mu$ be the solution up to the blowing time $\tau$ of the conditional McKean Vlasov equation \eqref{eq:stomomentdyn}. Then $\mu$ is solution to the stochastic Fokker Plank equation up to the blowing time $\tau$, meaning that for all $M\in \N^*$, $\mathbb{P}^0$ a.s, for all $0\leq t \leq T$:
    \begin{equation}
    \begin{split}
        d \mu_{t\wedge \tau^M}=& \na \cdot \po \na U+ \na W \ast \mu_{t \wedge \tau^M} \pf \dd t + \frac{\tilde{\sigma}^2}{2} \Delta \mu_{t\wedge \tau^M}  \dd t  \\
        &-\na \cdot \po b_{\mu_t} \mu_t\pf \dd t -  \na \cdot \po    \mu_t \sigma_{\mu_t}\pf   dW_t^0+ \frac{1}{2}\sum_{i,j=1}^d \partial_{i,j}^2 \po  (\sigma_{\mu_t}\sigma_{\mu_t}^T)_{i,j} \mu_t \pf  \dd t \,.
    \end{split}
    \end{equation}
    where this equation  must be understood in the weak sense against any test function in $\mathcal{C}^2_b(\R^d)$.
\end{cor}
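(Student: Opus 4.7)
The plan is to deduce Corollary~\ref{cor:edps} directly from Proposition~\ref{prop:stochasticmomentswelldefined} combined with the standard stochastic Fokker-Planck computation, applied on each localization interval. I would proceed in three steps.

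First, I would verify that $\tilde{b}(x,\mu)=-\na U(x)-\mu\ast \na W(x)$ and the constant $\tilde{\sigma}$ satisfy Assumption~\ref{hyp:driftetdiffusionenplus}. Local Lipschitz regularity (B.1) follows from that of $\na U$ and $\na W$: using an optimal coupling between $\mu$ and $\mu'$, one bounds $|\mu\ast \na W(x)-\mu'\ast \na W(x')|$ by $\mathcal{W}_\alpha(\mu,\mu')$ and $|x-x'|$ on bounded sets thanks to the local Lipschitz control of $\na W$. Under (C), the polynomial growth of order $\alpha$ of $\na U$ and $\na W$ directly yields (B.2), after bounding the convolution by $1+|x|^\alpha+m_\alpha(\mu)$. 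Under (C'), $|\tilde{\sigma}|^2$ is constant so the growth bound is trivial, and (B.2') reduces to the coercivity of $\tilde{b}$: from $\langle x, \na U(x)\rangle \geq \lambda |x|^{q}-C$ and $|\mu\ast \na W(x)|\leq C(1+|x|^{q'}+m_{q'}(\mu))$ with $q'<q$, Young's inequality absorbs the $\na W$ contribution into the dominating $-c_M|x|^q+\bar{C}_M$.

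Second, Proposition~\ref{prop:stochasticmomentswelldefined} then provides a strong solution $(X_t,\mu_t)$ of \eqref{eq:stomomentdyn} up to $\tau=\lim_M \tau^M$, with all $\tau^M$ adapted to $\mathbb{F}^{W^0}$. To derive the SPDE, I fix $M\in\N^*$ and $\phi\in \mathcal{C}^2_b(\R^d)$, and apply Itô's formula to $\phi(X_{t\wedge \tau^M})$. On the localized interval the coefficients $\tilde{b}, b, \tilde{\sigma}, \sigma$ evaluated along $(X_s,\mu_s)$ are bounded by construction of $\tau^M$ in \eqref{def:tauM_explosion}, so all the stochastic integrals are well defined and square integrable. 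I then take the conditional expectation with respect to $\mathcal{F}^0$ of the resulting semimartingale decomposition, using that $\mu_{t\wedge \tau^M}=\mathcal{L}(X_{t\wedge \tau^M}\,|\,\mathcal{F}^0)$: conditional Fubini allows me to exchange $\mathbb{E}[\cdot|\mathcal{F}^0]$ with the time integrals. The stochastic integrals driven by the private noise $W$ disappear under $\mathcal{F}^0$-conditioning (they are martingales in the enlarged filtration with $\mathbb{F}^{W^0}\otimes\mathbb{F}^1$-adapted integrands), while those driven by $W^0$ are preserved and produce the $dW_t^0$ divergence term. Identifying the remaining drift contributions gives precisely the stopped weak form of the announced SPDE, which then extends to the whole $[0,\tau)$ as $M\to\infty$.

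The main technical obstacle is this last interchange of conditioning and stochastic integration, which crucially uses the fact (highlighted right after Proposition~\ref{prop:stochasticmomentswelldefined}) that $\tau^M$ is an $\mathbb{F}^{W^0}$-stopping time, so that $\mathcal{L}(X_{t\wedge \tau^M}\,|\,\mathcal{F}^0)=\mu_{t\wedge \tau^M}$ commutes with the stopping procedure. Had the localizing times involved the private noise, the identification of the conditional law with the stopped flow would fail (as pointed out via \cite{hong2024mckean}), and the martingale part with respect to $W$ would leave a non-trivial contribution after conditioning. In \eqref{def:tauM_explosion} the $\tau^M$ are defined in terms of $\mu_t(|x|^\alpha)$, $\mu_t(f)$ and $\det(\mu_t(\na f^T\na f))$, all of which are $\mathbb{F}^{W^0}$-adapted by the Yamada-Watanabe-type result underlying the strong solution, so this condition is met.
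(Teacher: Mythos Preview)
Your proof is correct and follows the same line as the paper's: both rely on applying It\^o's formula to $\phi(X_{t\wedge\tau^M})$ for $\phi\in\mathcal C^2_b$, then conditioning on $\mathcal F^0$, with the crucial observation that $\tau^M$ is an $\mathbb F^{W^0}$-stopping time so that the indicator $\1_{s\leq\tau^M}$ can be pulled out of the conditional expectation and the $dW$-integral vanishes (the paper phrases this by invoking the proof of Theorem~1.9 in \cite{hammersley2021weak}). One small inaccuracy: on $[0,\tau^M]$ the coefficients $b_{\mu_s}(X_s),\sigma_{\mu_s}(X_s)$ are not bounded, since $\tau^M$ only controls $\mu_s$ while $\nabla f(X_s)$ has polynomial growth in $X_s$; the square integrability you need comes instead from the moment bounds of Lemma~\ref{lem:boundmoments} combined with $\nabla\phi,\nabla^2\phi$ being bounded.
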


\begin{proof}
    The proof of this result directly follows from the proof of Theorem 1.9 in \cite{hammersley2021weak} and the fact that $\tau^M$ is a $\mathbb{F}^{W^0}$ stopping time. Indeed, because we have that for all $0\leq s\leq T$, the event $\{s\leq \tau^M\} \in \mathcal{F}^0$, and so for any $\varphi\in \mathcal{C}^2_b$:
    \begin{equation*}
        \begin{split}
            \mathbb{E}\po \int_0^{t\wedge \tau^M} \nabla \varphi(X_s) dW_s \big| \mathcal{F}^0 \pf &=\mathbb{E}\po \int_0^{t} \1_{s\leq \tau^M} \nabla \varphi(X_s) dW_s \big| \mathcal{F}^0 \pf\\
            &=\1_{s\leq \tau^M} \mathbb{E}\po \int_0^{t}  \nabla \varphi(X_s) dW_s \big| \mathcal{F}^0 \pf
        \end{split}
    \end{equation*}
    With this observation (also valid for the deterministic integrals and the one against $dW_s^0$), one can easily adapt the proof from \cite{hammersley2021weak} and prove the desired result.
\end{proof}

We can now present a propagation of chaos result for the solution of the conditional McKean Vlasov equation \eqref{eq:stomomentdyn}. Because of its explosive behavior (and of the associated particle systems), we must adapt the statement of Proposition \ref{prop:propchaos}, because intuitively it is not possible to ensure the convergence of the particle system at times arbitrarily close to the blowing time. However, conditionally on the non-explosion of the non linear process, we will be able to ensure the non explosion of the particle system and its convergence when the number of particles goes to infinity. In order to state this result, we slightly adapt Definition \ref{def:setuppropchaos} in order to present a clear set-up for the propagation of chaos.

\begin{defi}(Set-up for the conditional propagation of chaos)\label{def:setuppropchaoscond}. 

Suppose that the observable $f$ verifies Assumption~\ref{hyp:observableregu1},that the coefficients $a$ and $s$ verify Assumption~\ref{hyp:coeffaets} and that $\tilde{b}$ and $\tilde{\sigma}$ verify Assumption \ref{hyp:driftetdiffusionenplus}. Let $\beta\geq 0$ such that $\nabla f $ has at most a polynomial growth of order $\beta$ (Assumption \ref{hyp:driftetdiffusionenplus}),  let $\alpha\geq 2$ bet the parameter associated to $f$ (Assumption \ref{hyp:observableregu1}), and let $b,\sigma$ defined by equation \eqref{eq:coeffsp}.

Let $(\Omega^0,\mathcal{F}^0,\mathbb{P}^0)$ and $(\Omega^1,\mathcal{F}^1,\mathbb{P}^1)$ be two complete probability spaces endowed with two complete and right continuous filtrations $\mathbb{F}^0=(\mathcal{F}^0_t)_{0 \leq t \leq T}$ and $\mathbb{F}^1=(\mathcal{F}^1_t)_{0 \leq t \leq T}$. Let $W^0=(W_t^0)_{0\leq t \leq T}$ be a $p$ dimensional $\mathbb{F}^0$ Brownian motion defined on $\Omega^0$ and let $(W^n)_{n\geq 1}$ be a sequence of independent $d$ dimensional $\mathbb{F}^1$ Brownian motion defined on $\Omega^1$. Let $(X_0^n)_{n \geq 1}$ be a sequence of independent and identically distributed $\mathcal{F}_0^1$ random variables defined on $\Omega^1$, with finite moments of all orders\footnote{Again, this assumption can be weakened if necessary}. Let $\Omega$ be the canonical probabilistic set-up associated to the two probabilistic spaces $(\Omega^0,\mathcal{F}^0,\mathbb{P}^0)$ and $(\Omega^1,\mathcal{F}^1,\mathbb{P}^1)$.

Because of Proposition \ref{prop:stochasticmomentswelldefined}, it is possible to define for any $n\geq 1$ $\bar{X}^n=(\bar{X}_{t\wedge \tau^n}^n)_{0\leq t \leq T}$ the solution of the conditional McKean Vlasov equation defined on $(\Omega,\mathcal{F},\mathbb{F},\mathbb{P},X_0^n,W^n,W^0)$ and $\bar{\mu}_{t \wedge \tau^n }^n$ the regular flow of conditional distribution associated, up to the blowing time $\tau_n:=\underset{M\rightarrow \infty}{\lim} \tau^{M}_n $, $\tau^M_n=\inf\{t\geqslant 0, \ \bar{\mu}_t^n\notin  \Lambda^M\}$. For all $n\geq 1$, the process $(\bar{\mu}^n)$ is indistinguishable from $(\bar{\mu}^1)$, so we can use the notation $\bar{\mu}$ for the mean field distribution common for all typical particles $\bar{X}^n$ and $\tau$ the associated blowing time.

Moreover, for all $N\geq 1$, the particle system \eqref{eq:commonnoisePS} made of $N$ particles $(X^{n,N})_{1\leq n\leq N}$ is also strongly uniquely solvable up to a blowing time $\tau^{(N)}$ defined as $\tau^{(N)}:=\underset{M\rightarrow \infty}{\lim} \tau^{(N)}_M $, $\tau^{(N)}_M=\inf\{t\geqslant 0, \ \pi^N_t\notin \Lambda^M\}$ on the probabilistic set-up $( \Omega,\mathcal{F},\mathbb{F},\mathbb{P},(X^{n,N}_0)_{1\leq n\leq N}, (W^k)_{0\leq k\leq N})$, with $ \pi^N_t=\pi((X_t^{n,N})_{1\leq n \leq N})=\frac{1}{N}\sum_{n=1}^{N}\delta_{X_t^{n,N}}$ the empirical distribution of the particle's system.
\end{defi}

The justification of the statements made in this definition is presented in the next section: they are consequences of the classical results for Lipschitz coefficients and of Proposition \ref{prop:stochasticmomentswelldefined}. We can now state the propagation of chaos result.

\begin{prop}\label{prop:PoC}
Let $0<t\leq T$ be such that $\mathbb P \po \tau>t\pf >0$. Then,
\begin{equation}
\label{eq:tauN->tau}
\mathbb P \po \tau^{(N)} \leqslant \tau \ |\ \tau>t\pf \underset{N\rightarrow \infty}\longrightarrow 0
\end{equation}
and, for any $\delta>0$,
\[\mathbb P \po \left. \tau^{(N)} > t\text{ and } \sup_{s\in[0,t]} \mathcal W_\alpha(\pi^N_t,\bar{\mu}_t) \geqslant \delta  \ \right|\ \tau>t\pf \underset{N\rightarrow \infty}\longrightarrow 0\,.\]

\end{prop}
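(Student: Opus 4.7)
My plan is a localization argument on top of the classical propagation of chaos (Proposition~\ref{prop:propchaos}). On the event $\{\tau > t\}$, since $\tau^M \uparrow \tau$, the limit distribution $\bar\mu_s$ stays inside $\Lambda^M$ on $[0, t]$ for $M$ large enough with high probability; and on this event, the singular coefficients coincide with the Lipschitz truncations $b^M, \sigma^M, \tilde b^M, \tilde\sigma^M$ constructed in the proof of Proposition~\ref{prop:stochasticmomentswelldefined}. Thus Proposition~\ref{prop:propchaos} applied to the truncated system can be transferred back to the original singular one, provided I can show that the particle system also stays inside a slightly larger $\Lambda^{M_1}$ on $[0, t]$ with high probability.

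\textbf{Execution.} Fix $\varepsilon, \delta > 0$. Using $\tau^M \uparrow \tau$ I pick $M_0$ with $\mathbb{P}\po \tau^{M_0} > t \mid \tau > t\pf \geq 1 - \varepsilon$. Next I choose $M_1 > M_0$ and $\eta \in (0, \delta]$ such that any probability measure at $\mathcal{W}_\alpha$-distance less than $\eta$ from some element of $\Lambda^{M_0}$ lies in $\Lambda^{M_1}$ (this is the main obstacle, see below). Let $(\bar X^{n, M_1}, \bar\mu^{M_1})$ and $(X^{n, N, M_1}, \pi^{N, M_1})$ denote the typical particle and the $N$-particle system driven by the $M_1$-truncated coefficients, which are globally Lipschitz and bounded and therefore globally well posed. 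Proposition~\ref{prop:propchaos} applied to this truncated system, combined with its first (pathwise) statement, the triangle inequality, and standard Wasserstein empirical measure estimates for conditionally i.i.d.\ samples with uniform high moments (granted by the moment assumption on the initial condition and the boundedness of the truncated coefficients), yields
\[
\mathbb{E}\co \sup_{s \in [0, t]} \mathcal{W}_\alpha(\pi^{N, M_1}_s, \bar\mu^{M_1}_s)^\alpha \cf \underset{N \to \infty}\longrightarrow 0.
\]
By pathwise uniqueness $\bar\mu^{M_1}_s = \bar\mu_s$ on $[0, \tau^{M_1}] \supset [0, \tau^{M_0}]$, so on $\{\tau^{M_0} > t\}$ the trajectory $\bar\mu^{M_1}$ stays in $\Lambda^{M_0}$ throughout $[0, t]$. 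On the further event $A_N := \{\sup_{s \leq t} \mathcal{W}_\alpha(\pi^{N, M_1}_s, \bar\mu^{M_1}_s) < \eta\}$, the choice of $\eta$ yields $\pi^{N, M_1}_s \in \Lambda^{M_1}$ throughout $[0, t]$; since the original and $M_1$-truncated coefficients agree on $\Lambda^{M_1}$, pathwise uniqueness forces $\pi^N_s = \pi^{N, M_1}_s$ on $[0, t]$, hence $\tau^{(N)}_{M_1} > t$, $\tau^{(N)} > t$, and $\sup_{s \leq t} \mathcal{W}_\alpha(\pi^N_s, \bar\mu_s) < \delta$. Markov's inequality on the display above gives $\mathbb{P}(A_N^c) \to 0$. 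Bounding each of the two probabilities in the statement (including~\eqref{eq:tauN->tau}, obtained by exhausting $\{\tau > t\}$ by the increasing events $\{\tau^{M_0} > t\}$ and using that on $\{\tau^{M_0} > t\} \cap A_N$ one has $\tau^{(N)}_{M_1} > t \geq \tau^{M_0}$, which forces $\tau^{(N)} > \tau^{M_0}$ and in the limit $\tau^{(N)} > \tau$) by $\varepsilon + \mathbb{P}(A_N^c)/\mathbb{P}(\tau > t)$, sending $N \to \infty$ and then $\varepsilon \to 0$ concludes both claims.

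\textbf{Main obstacle.} The delicate step is the existence of $(M_1, \eta)$: I need $\Lambda^{M_0}$ to admit a uniform $\mathcal{W}_\alpha$-thickening contained in $\Lambda^{M_1}$ for some $M_1 > M_0$. The moment condition $m_\alpha(\mu) < M$ is controlled by $|m_\alpha(\nu)^{1/\alpha} - m_\alpha(\mu)^{1/\alpha}| \leq \mathcal{W}_\alpha(\mu, \nu)$. The conditions $d(\mu(f), \mathcal{S}) > 1/M$ and $\det \mu(\nabla f^T \nabla f) > 1/M$ reduce to a Lipschitz estimate of the form $|\mu(g) - \nu(g)| \leq C(M_1) \mathcal{W}_\alpha(\mu, \nu)$ for $g \in \{f_k, (\nabla f^T \nabla f)_{ij}\}$, valid whenever $m_\alpha(\mu), m_\alpha(\nu) \leq M_1$; this follows from the polynomial growth Lipschitz bounds on $f$ and $\nabla f$ in Assumption~\ref{hyp:observableregu1} by a standard Kantorovich duality computation with test functions of polynomial growth at most $\alpha - 1$, combined with continuity of the determinant on the relevant compact set of matrices. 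Taking for instance $M_1 = 2 M_0$ and $\eta$ small enough in terms of $M_0$ and the constant $C(M_1)$ then yields the required uniform thickening, after which the rest of the argument is routine.
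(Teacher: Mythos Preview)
Your approach is essentially the paper's: localize by truncation, apply Proposition~\ref{prop:propchaos} to the truncated (Lipschitz) system, and use a $\mathcal W_\alpha$-thickening of $\Lambda^{M_0}$ inside a larger $\Lambda^{M_1}$ to transfer back. Your ``main obstacle'' paragraph is in fact more explicit than the paper, which simply invokes the positive distance $r$ between $\Lambda^M$ and $(\Lambda^{2M})^c$ without justification.

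Two caveats. First, your ``$M_1$-truncated coefficients'' are not globally Lipschitz and bounded in general: under (A.1), (A.3) or (B.2'), $\nabla f(x)$ and $\tilde b_\mu(x)$ have genuine polynomial growth in $x$, so truncating only in $\mu$ leaves unbounded coefficients. You need the full $(K,M)$-truncation of Lemma~\ref{lem:troncatures}, exactly as the paper does; the rest of your argument then goes through unchanged with an extra parameter $K$. Second, your parenthetical justification of \eqref{eq:tauN->tau} is garbled: on $\{\tau^{M_0}>t\}\cap A_N$ you have $\tau^{M_0}>t$, not $t\geq\tau^{M_0}$, and there is no way to pass from $\tau^{(N)}>t$ to $\tau^{(N)}>\tau$ by ``taking a limit'' in $M_0$, since $\tau^{(N)}$ does not depend on $M_0$. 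In fact the paper's own proof only establishes $\mathbb P(\tau^{(N)}\leq t\mid \tau>t)\to 0$, so the $\tau$ in \eqref{eq:tauN->tau} is evidently a typo for $t$; your argument (once corrected for the truncation) proves exactly this, which is also all that is needed for the second claim.
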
 

The proof of this result is presented is the next section, and rely on the propagation of chaos for Lipschitz coefficients and on the localization procedure behind the proof of Proposition \ref{prop:stochasticmomentswelldefined}.

\section{Proofs of Section \ref{sec:def-process}}\label{sec:proof-main}

\subsection{Proof of Proposition \ref{prop:stochasticmomentswelldefined}}

The first step of the proof consists in identifying bounded subsets of $\R^d\times \mathcal{P}_\alpha(\R^d)$ in which the coefficients $b$ and $\sigma$ will be Lipschitz and bounded. These subsets will be indexed by two parameters (one for the position and one for the mean field distribution), and are increasingly bigger as the parameters tend to infinity. The solution of the equation will be defined as long as it stays in one of these subsets, which naturally leads to an explosive behavior and a blowing time corresponding to the exit time of all of these subsets.

\begin{lem} \label{lem:regucoefsursomaines}
    Suppose that the observable $f$ verifies Assumption \ref{hyp:observableregu1} with an associated parameter $\alpha$ depending on its regularity, and that the coefficients $a$ and $s$ verify Assumption~\ref{hyp:coeffaets}, with $\mathcal{S}$ the set of singularities of the coefficients $a$ and $s$. For all $M> 0$ and $K> 0$, we define the following domains:
    \begin{equation*}
        \mathcal{D}^{K,M}:=\{ x,\mu \in \R^d \times \mathcal{P}_\alpha(\R^d): \hspace{3pt} |x|< K, \hspace{3pt} m_\alpha(\mu ) < M , \hspace{3pt} d(\mu(f), \mathcal{S})> \frac{1}{M}, \hspace{3pt} \mathrm{det}(\mu(\nabla f ^T \nabla f)) > \frac{1}{M} \}
    \end{equation*}
    Then the coefficients $b:\R^d\times \mathcal{P}_\alpha(\R^d) \rightarrow \R^d$ and $\sigma:\R^d\times \mathcal{P}_\alpha(\R^d) \rightarrow \R^{d\times p}$ defined by:
    \begin{equation} 
     \left\{
        \begin{array}{lll}
        \sigma_{\mu}(x) &=& \nabla f (x)\po \mu[\nabla f^T \nabla f]\pf ^{-1} s(\mu(f))\\
    
        b_{\mu}(x) &=& \nabla f (x) \po \mu[\nabla f^T \nabla f]\pf ^{-1} [a(\mu(f))-\frac{1}{2}\mu(\sigma_{\mu}\sigma_{\mu}^T :\nabla^2 f)] \,.
        \end{array}\right.
\end{equation}
are Lipschitz and bounded on $\mathcal{D}^{K,M}$ for all $K> 0$ and $M> 0$.
\end{lem}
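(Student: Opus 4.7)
The plan is to decompose both $\sigma_\mu(x)$ and $b_\mu(x)$ as products and compositions of a few elementary maps, and to check that each is bounded and Lipschitz on $\mathcal{D}^{K,M}$. For \emph{boundedness}, each factor in \eqref{eq:defcoefs} can be directly controlled on this set: the condition $|x|<K$ combined with the regularity of $f$ bounds $\nabla f(x)$ and $\nabla^2 f(x)$ uniformly; the $\alpha$-moment bound $m_\alpha(\mu)<M$ together with the polynomial growth of $f$ and $\nabla f^T\nabla f$ (of order at most $\alpha$ in every case of Assumption~\ref{hyp:observableregu1}) bounds $\mu(f)$ and $\mu(\nabla f^T\nabla f)$; the determinant condition $\det(\mu(\nabla f^T\nabla f))>1/M$ controls $(\mu(\nabla f^T\nabla f))^{-1}$; and the distance condition $d(\mu(f),\mathcal{S})>1/M$ confines $\mu(f)$ to a compact subset of $(\mathcal{S}_{1/M})^c$ on which $a,s$ are bounded by Assumption~\ref{hyp:coeffaets}.

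For \emph{Lipschitz continuity}, the key tool is the classical estimate: if $g:\R^d\rightarrow\R^q$ satisfies $|g(x)-g(y)|\leq C|x-y|(1+|x|^\gamma+|y|^\gamma)$ with $\gamma+1\leq \alpha$, then $\mu\mapsto \mu(g)$ is Lipschitz from $\{m_\alpha(\mu)\leq M\}\subset \mathcal{P}_\alpha(\R^d)$ (endowed with $\mathcal W_\alpha$) into $\R^q$, which follows by Hölder's inequality applied to an optimal coupling. The growth constraint $\gamma+1\leq \alpha$ is exactly what the definition of $\alpha$ in Assumption~\ref{hyp:observableregu1} ensures: it will be applied to $f$, to $\nabla f^T\nabla f$ and, crucially, to $H_k:=\nabla f^T\nabla^2 f_k\nabla f$. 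Combining this with the local Lipschitz continuity of matrix inversion on $\{A:\det A\geq 1/M,\,\|A\|\leq C\}$, of $a$ and $s$ on bounded parts of $(\mathcal{S}_{1/M})^c$, and of $\nabla f$ on $B(0,K)$, the product and composition rules for bounded Lipschitz maps immediately yield Lipschitz continuity of $\sigma_\mu(x)$ on $\mathcal{D}^{K,M}$.

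The only subtle point in treating $b_\mu$ is the term $\mu(\sigma_\mu\sigma_\mu^T:\nabla^2 f_k)$, whose integrand itself depends on $\mu$. To bypass this, I exploit the factorization $\sigma_\mu(x)=\nabla f(x)\,M_\mu$ with $M_\mu:=(\mu(\nabla f^T\nabla f))^{-1}s(\mu(f))$ \emph{independent of $x$}, which yields
\begin{equation*}
\mu\bigl(\sigma_\mu\sigma_\mu^T:\nabla^2 f_k\bigr)=(M_\mu M_\mu^T):\mu(H_k).
\end{equation*}
Both $M_\mu$ and $\mu(H_k)$ are then Lipschitz in $\mu$ on $\mathcal{D}^{K,M}$ by the previous step, so Lipschitz regularity of $b_\mu$ follows by multiplying by $\nabla f(x)$ and combining with the other factors already treated.

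The main difficulty I expect is not conceptual but bookkeeping: tracking, through every product and composition, that the polynomial growth of each Lipschitz constant stays at most $\alpha-1$. In case (A.3), the Lipschitz constant of $H_k$ grows polynomially of order $2\alpha_2+\alpha_3+2$, while $\alpha-1=2\alpha_2+\alpha_3+2$ by the definition of $\alpha$, so the worst offender sits precisely on the threshold; this is exactly why $\alpha=\max(\alpha_1+1,\,2\alpha_2+\alpha_3+3)$ is taken in Assumption~\ref{hyp:observableregu1}. Cases (A.1) and (A.2) are strictly easier since there every quantity involved is bounded Lipschitz and the threshold $\alpha=2$ is amply sufficient.
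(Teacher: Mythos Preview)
Your proposal is correct and follows essentially the same route as the paper: decompose $\sigma_\mu$ and $b_\mu$ into elementary factors, use the H\"older-coupling estimate to get that $\mu\mapsto\mu(g)$ is Lipschitz on $\{m_\alpha(\mu)\le M\}$ whenever $g\in L^{(\alpha-1)}$, and check that the polynomial orders match the definition of $\alpha$. Your handling of the delicate term $\mu(\sigma_\mu\sigma_\mu^T:\nabla^2 f_k)=(M_\mu M_\mu^T):\mu(H_k)$ via the trace identity is in fact a slight streamlining of the paper's argument, which introduces an auxiliary two-variable map $L_k^M(B,\mu)$ and proves Lipschitz continuity in $B$ and $\mu$ separately---the content is identical.
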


The assumption made on the observable $f$ and its derivatives ensures that in the appropriate Wasserstein space, functions of type $\mu \mapsto \mu(g)$ with $g$ based on $f$ and its derivatives are locally Lipschitz and bounded outside of the singularities in the coefficients. The domains $\mathcal{D}^{K,M}$ are constructed to be contained on bounded subsets of $\R^d \times \mathcal{P}_\alpha(\R^d)$, and to be distant from the singularities of $b$ and $\sigma$ in $\mu$ (the inverse of the matrix $\mu(\nabla f^T\nabla f)$ is not necessarily defined and $\mu(f)$ must not be in the set of singularities of $a$ and $s$). The technical details  of the proof of this lemma is  deferred to Appendix~\ref{app:proof1stlemma}.

\medskip{}

From this lemma and Assumption \ref{hyp:driftetdiffusionenplus}, we can multiply the coefficients $b,\tilde b,\sigma,\tilde \sigma$ by a cut-off function $\chi^{K,M}$ (a smooth approximations of the   indicator functions of the domains $\mathcal{D}_{K,M}$) in order to get the following:


\begin{lem} \label{lem:troncatures}
In the settings of Lemma~\ref{lem:regucoefsursomaines}, assuming furthermore Assumption \ref{hyp:driftetdiffusionenplus}, let $K>0$ and $M>0$. We can build truncated versions of the coefficients named $b^{K,M}$, $\tilde{b}^{K,M}$, $\sigma^{K,M}$, $\tilde{\sigma}^{K,M}:\R^d \times \mathcal P_\alpha(\R^d) \rightarrow \R^d$ that coincide with $b, \tilde{b},  \sigma , \tilde{\sigma}$ on $\mathcal{D}^{K,M}$ and that are globally Lipschitz and bounded.
\end{lem}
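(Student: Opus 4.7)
The plan is to multiply each coefficient by a single smooth cut-off function $\chi^{K,M} : \R^d \times \mathcal{P}_\alpha(\R^d) \to [0,1]$, globally Lipschitz, equal to $1$ on $\mathcal{D}^{K,M}$ and vanishing outside a slightly enlarged domain contained in $\mathcal{D}^{K+1, M+1}$ (where the four inequalities defining $\mathcal{D}$ are relaxed by one unit in the appropriate direction). I then set $b^{K,M}(x,\mu) := \chi^{K,M}(x,\mu)\, b_\mu(x)$ with the convention that the product is $0$ whenever $\chi^{K,M}$ vanishes, and analogously for $\tilde{b}^{K,M}, \sigma^{K,M}, \tilde{\sigma}^{K,M}$. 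Since $\chi^{K,M}(x,\mu) > 0$ forces both $\det \mu[\nabla f^T \nabla f] > 1/(M+1)$ and $d(\mu(f), \mathcal{S}) > 1/(M+1)$, the singular values of the original coefficients are never actually evaluated.

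To build $\chi^{K,M}$, I fix a smooth non-increasing scalar cut-off $\theta : \R \to [0,1]$ with $\theta \equiv 1$ on $(-\infty,0]$ and $\theta \equiv 0$ on $[1,\infty)$, and define
\[
\chi^{K,M}(x,\mu) := \theta_1(|x|)\, \theta_2(m_\alpha(\mu))\, \theta_3(d(\mu(f), \mathcal{S}))\, \theta_4(\det \mu[\nabla f^T \nabla f]),
\]
where $\theta_1, \theta_2, \theta_3, \theta_4$ are appropriate rescalings of $\theta$ or $1-\theta$ so that $\chi^{K,M}$ has the desired support. The delicate step is proving global Lipschitz continuity. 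The four inner inputs are only \emph{locally} Lipschitz on their natural domains: $x \mapsto |x|$ is $1$-Lipschitz, while for the measure-dependent maps the elementary bound $\big| |x|^\alpha - |y|^\alpha \big| \leqslant \alpha |x-y|(|x|^{\alpha-1} + |y|^{\alpha-1})$ combined with Hölder's inequality yields
\[
|m_\alpha(\mu) - m_\alpha(\nu)| \leqslant \alpha \bigl(m_\alpha(\mu)^{(\alpha-1)/\alpha} + m_\alpha(\nu)^{(\alpha-1)/\alpha}\bigr)\, \mathcal W_\alpha(\mu, \nu),
\]
and analogous computations, using the polynomial growth of $f$, $\nabla f$ and $\nabla^2 f$ from Assumption~\ref{hyp:observableregu1}, give local Lipschitz bounds (with moment-dependent constants) for $\mu \mapsto \mu(f)$ and $\mu \mapsto \mu[\nabla f^T \nabla f]$; the determinant, a polynomial in the matrix entries, then inherits local Lipschitzness. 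To promote local to global Lipschitzness of each composition $\theta_i \circ \phi_i$, I replace $\phi_i$ by its truncation $\tilde\phi_i := \min(\phi_i, C_i)$ or $\max(\phi_i, -C_i)$ for constants $C_i$ depending on $K, M$, chosen so that $\theta_i \circ \tilde\phi_i = \theta_i \circ \phi_i$ throughout $\mathcal{P}_\alpha(\R^d)$. The truncated maps are globally Lipschitz: along a Wasserstein geodesic connecting $\mu$ and $\nu$, $t \mapsto \phi_i(\mu_t)$ is continuous, so either $\tilde\phi_i(\mu) = \tilde\phi_i(\nu)$ outright, or an intermediate value argument reduces to the local estimate applied on a bounded-moment neighbourhood.

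Once $\chi^{K,M}$ is known to be globally Lipschitz (and trivially bounded by $1$), the conclusion is routine. On $\mathrm{supp}(\chi^{K,M}) \subset \overline{\mathcal{D}^{K+1, M+1}}$, Lemma~\ref{lem:regucoefsursomaines} yields boundedness and Lipschitzness of $b$ and $\sigma$, while Assumption~\ref{hyp:driftetdiffusionenplus}(B.1) combined with either (B.2) or (B.2') gives the same for $\tilde b$ and $\tilde \sigma$ (since the bounds $|x| < K+1$ and $m_\alpha(\mu) < M+1$ hold on this support). Products of bounded Lipschitz functions are bounded Lipschitz, and extension by $0$ outside the support of one of the factors preserves both properties, giving the claimed global regularity on $\R^d \times \mathcal{P}_\alpha(\R^d)$.

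The main obstacle is the middle step, i.e., establishing the global Lipschitzness of $\chi^{K,M}$: the naive local bounds blow up with moments, and it is the geodesic structure of $(\mathcal{P}_\alpha, \mathcal W_\alpha)$ that allows one to confine the argument to a bounded-moment region where a uniform constant exists. Once this is handled cleanly, the rest of the proof reduces to standard product rules for Lipschitz functions.
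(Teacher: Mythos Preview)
Your approach is essentially the same as the paper's: multiply by a cut-off $\chi^{K,M}$ built as a product of four scalar cut-offs on $|x|$, $m_\alpha(\mu)$, $d(\mu(f),\mathcal S)$, $\det\mu[\nabla f^T\nabla f]$, supported in the enlarged domain $\mathcal{D}^{K+1,M+1}$, and then use the geodesic structure of $(\mathcal{P}_\alpha,\mathcal W_\alpha)$ to upgrade local to global Lipschitzness.

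There is, however, a gap in your middle step. You claim that each composition $\theta_i\circ\tilde\phi_i$ is \emph{individually} globally Lipschitz via the truncation-and-geodesic trick. This works for $\phi_1=|x|$ and $\phi_2=m_\alpha(\mu)$: truncating $m_\alpha$ at $M+1$ and applying the intermediate value theorem along a geodesic genuinely confines you to $\{m_\alpha\leq M+1\}$, where the local constant is uniform. But it fails for $\phi_3=d(\mu(f),\mathcal S)$ and $\phi_4=\det\mu[\nabla f^T\nabla f]$: their local Lipschitz constants depend on $m_\alpha(\mu)$, not on the value of $\phi_3$ or $\phi_4$. Truncating $\phi_3$ at $1/(M+1)$ and finding a geodesic point where $\phi_3$ equals the threshold does \emph{not} bound the moment there. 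In fact $\theta_3\circ\phi_3$ alone is not globally Lipschitz on $\mathcal P_\alpha$: take two measures with huge $m_\alpha$ and tiny $\mathcal W_\alpha$-distance, one with $\mu(f)$ close to $\mathcal S$ and one not.

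The paper sidesteps this by never claiming global Lipschitzness of $\chi^{K,M}$ or its factors. It only records that the full product $F:=\chi^{K,M}\,b$ is Lipschitz on $\overline{\mathcal D^{K+1,M+1}}$ and identically zero outside, and then runs the geodesic/intermediate-value argument on $F$ itself: if one endpoint is outside $\mathcal D^{K+1,M+1}$, walk along the geodesic to the first point where $F$ becomes nonzero; that point lies in $\overline{\mathcal D^{K+1,M+1}}$ by continuity, and from there the local estimate on $\overline{\mathcal D^{K+1,M+1}}$ applies with a uniform constant. Your proof is easily repaired in the same way: apply the geodesic argument once to the whole $\chi^{K,M}$ (which vanishes outside $\mathcal D^{K+1,M+1}$ thanks to the $\theta_2$ factor), or directly to $\chi^{K,M}b$, rather than factor by factor. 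With that change your final product step goes through as written.
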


The details are deferred to  Appendix \ref{app:prooftroncature}.  

\medskip{}

We can now use the existence and uniqueness result for regular coefficients presented in Proposition \ref{prop:existcoefflipschitz}. Because for all $K>0$ and $M>0$ the coefficients $b^{K,M}$, $\tilde{b}^{K,M}$, $\sigma^{K,M}$ and $\tilde{\sigma}^{K,M}$ verify Assumption \ref{hyp:lipschitzbornés}, the conditional McKean Vlasov equation:
\begin{equation} \label{eq:condMVwithKandM}
    dX_t=[b^{K,M}(X_t,\mu_t)+\tilde{b}(X_t,\mu_t)]dt+\tilde{\sigma} (X_t,\mu_t) dW_t+\sigma^{K,M}(X_t,\mu_t)dW_t^0
\end{equation}
 is strongly uniquely solvable on $[0,T]$. We note $(X_t^{K,M})_{0\leq t \leq T}$ its unique strong solution and $(\mu_t^{K,M})_{0\leq t \leq T}$ its regular version of its conditional law knowing $\mathcal{F}^0$. We define the following stopping times:
\begin{align*}
    \gamma^{K,M}&= \text{inf}\{0 \leq  t \leq T, \quad |X_t^{K,M}| \geq  K\}\\
    \tau^{K,M}&= \text{inf}\{0 \leq  t \leq T, \quad \mu_t^{K,M} \notin \Lambda^M\}
\end{align*}
and 
$$\gamma^{M}= \underset{K\rightarrow \infty}{\text{lim}}\gamma^{K,M} \hspace{1cm} \tau^{M}= \underset{K\rightarrow \infty}{\text{lim}}\tau^{K,M}.$$
Since  $X^{K,M}$ is $\mathbb{F}^{X_0,W^0,W}$ adapted with continuous paths and because $B(0,K)^c$ is closed, $\gamma^{K,M}$ (and by taking the limit also $\gamma^M$) is a $\mathbb{F}^{X_0,W^0,W}$ stopping time. With the same argument, because $\mu^{K,M}$ is $\mathbb{F}^{W^0}$ adapted with continuous paths and $(\Lambda^M)^c$ is closed, $\tau^{K,M}$ and $\tau^{M}$ are $\mathbb{F}^{W^0}$ stopping times. By construction, the stopping times $\gamma^{K,M}$ and $\tau^{K,M}$ are non decreasing in $K$ and $M$, and the stopping time $\gamma^{K,M} \wedge \tau^{K,M}$ correspond to the first time when the process $(X^{K,M},\mu^{K,M})$ leaves the domain $\mathcal{D}^{K,M}$. It is also worth noticing that if we compare what happens for index $0<M \leq M'$ and $0<K \leq K'$, we have that $\mathbb{P}$ a.s, for all $t\leq T$:

\begin{equation} \label{eq:unicitejusqueTA}
    X^{K,M}_{t \wedge \gamma^{K,M} \wedge \tau^{K,M}  }= X^{K',M'}_{t \wedge \gamma^{K,M} \wedge \tau^{K,M}  } \quad \text{and} \quad \mu^{K,M}_{t \wedge \gamma^{K,M} \wedge \tau^{K,M}  }= \mu^{K',M'}_{t \wedge \gamma^{K,M} \wedge \tau^{K,M}  }
\end{equation}

This follows from the fact that $(X^{K,M},\mu^{K,M})$ and $(X^{K',M'},\mu^{K',M'})$ are both solutions to Equation~\eqref{eq:condMVwithKandM} until the stopping time $\gamma^{K,M} \wedge \tau^{K,M}$, and that this equation is uniquely solvable. The idea behind the localization procedure is then to let $K$ and $M$ to go to infinity, and to construct the solution as the limiting behaviors of these stopped processes. 

\medskip{}

The key idea for the following is that it is not really necessary to control the explosion in $x$ (corresponding to the effect of the stopping time $\gamma^{K,M}$), because knowing that the mean field limit has not exploded yet is enough to ensure that a typical particle cannot explode by itself, and so necessarily if the evolution explodes it can be described at the mean field level. To formalize this idea, we need to justify bounds on the moments of $X_t^{K,M}$ that are independent of the $t$ and $K$.

\begin{lem}\label{lem:boundmoments}
    The following inequalities are verified by $X^{K,M}$. 

    \begin{enumerate}
        \item Suppose that $\tilde{b} $ satisfy assumption (B.2'). Then, for all $M,r>0$, there exists $C>0$ such that for all all $K>0$:
        \begin{equation*}
            \mathbb{E}\po \underset{t \in [0,T]}\sup|X^{K,M}_{t\wedge \tau^{K,M}\ \wedge \gamma^{K,M}}|^r\pf  \leq C
        \end{equation*}
        
        \item Suppose that $\tilde{b} $ satisfy assumption (B.2).  Then, for all $M>0$, there exists $C>0$ such that for all all $K>0$:
        \begin{equation*}
            \mathbb{E}\po \underset{t\in [0,T]} \sup|X^{K,M}_{t\wedge \tau^{K,M} \wedge \gamma^{K,M} }|^2  \pf  \leq C
        \end{equation*}
    \end{enumerate}
\end{lem}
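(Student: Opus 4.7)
The plan is to apply It\^o's formula to $|X^{K,M}_{t\wedge\theta}|^r$ with $\theta:=\tau^{K,M}\wedge\gamma^{K,M}$ (for $r=2$ in part (2) and arbitrary $r\geq 2$ in part (1)), and to derive $K$-independent moment bounds from two ingredients. First, on $\mathcal{D}^{K,M}$ the truncated coefficients coincide with $b,\sigma$, and the analysis behind Lemma~\ref{lem:regucoefsursomaines} yields
\[
|b_\mu(x)|+|\sigma_\mu(x)| \leq C_M\bigl(1+|x|^\beta\bigr), \qquad (x,\mu)\in\mathcal{D}^{K,M},
\]
where $\beta$ is the polynomial growth order of $\nabla f$ and $C_M$ depends only on $M$ (through the moment bound $m_\alpha(\mu)\leq M$, the distance $d(\mu(f),\mathcal{S})\geq 1/M$, and the ellipticity threshold $\det(\mu(\nabla f^T\nabla f))\geq 1/M$), but not on $K$. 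Second, since $\tau^{K,M}$ is $\mathbb{F}^{W^0}$-measurable and $\mu_s$ is the conditional law of $X_s$ given $\mathcal{F}^0$, for $r\in[0,\alpha]$,
\[
\mathbb{E}\bigl[|X_s|^r \1_{s<\tau^{K,M}}\bigr] = \mathbb{E}\bigl[\1_{s<\tau^{K,M}}\, m_r(\mu_s)\bigr] \leq 1+M,
\]
providing an integrated $L^1$-control on polynomial expressions in $X_s$ up to order $\alpha$, uniformly in $K$.

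For part (2), It\^o applied to $|X_{t\wedge\theta}|^2$ produces, via Cauchy--Schwarz on the $X\cdot b$ and $X\cdot\tilde{b}$ terms, a drift bounded by $C_M(1+|X_s|^2+|X_s|^{2\beta}+|X_s|^\alpha)$. A direct case-by-case check of Assumption~\ref{hyp:observableregu1} (for (A.3), $2\beta=2\alpha_2+2\leq 2\alpha_2+\alpha_3+3\leq\alpha$) shows $2\beta\leq\alpha$, so the $|X_s|^{2\beta}$ contribution is absorbed into $1+|X_s|^\alpha$. The $|X_s|^\alpha$ term is then integrated using the conditional law identity above. Burkholder--Davis--Gundy applied to the martingale part yields a term $\leq C\,\mathbb{E}\bigl[\sup_{s\leq t}|X_{s\wedge\theta}|\cdot(\int_0^{t\wedge\theta}|\tilde\sigma|^2+|\sigma|^2\,ds)^{1/2}\bigr]$, which is absorbed into the left-hand side by $\sqrt{ab}\leq \epsilon a+b/(4\epsilon)$. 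Standard Gronwall on $t\mapsto\mathbb{E}[\sup_{s\leq t}|X_{s\wedge\theta}|^2]$ closes the estimate.

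For part (1), under (B.2'), I apply It\^o to $|X|^r$ for arbitrary $r\geq 2$ (the assumed finite moments of all orders for $X_0$ make this meaningful). The coercivity $\langle\tilde{b}_\mu(x),x\rangle\leq -c_M|x|^q+\bar{C}_M$ with $q>\alpha$ generates a dominant negative term $-rc_M|X_s|^{r+q-2}$ in the It\^o drift of $|X_s|^r$. All other positive polynomial contributions from $b$, $\sigma$, and $\tilde\sigma$ have orders in $\{r-1,\,r+\beta-1,\,r+2\beta-2\}$, each strictly less than $r+q-2$ since $q>\alpha\geq\max(\beta+1,2\beta)$ in every case of Assumption~\ref{hyp:observableregu1}. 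Young's inequality absorbs them into the coercive term, leaving a drift bounded above by a $K$-independent constant. Conclusion follows as in part (2).

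The main obstacle is the first ingredient above: making the bounds on $b,\sigma$ genuinely $K$-independent on $\mathcal{D}^{K,M}$. This requires quantitative tracking of the constants through the proof of Lemma~\ref{lem:regucoefsursomaines}, in particular the boundedness of $(\mu[\nabla f^T\nabla f])^{-1}$, $a(\mu(f))$, $s(\mu(f))$, and $\mu(\sigma_\mu\sigma_\mu^T:\nabla^2 f)$ purely in terms of the constraints defining $\mathcal{D}^{K,M}$ (moment, distance to $\mathcal{S}$, ellipticity), without any dependence on the bound $K$ on $|x|$. Once this quantitative dependence on $M$ alone is established, the rest of the argument is a standard Gronwall--BDG closure.
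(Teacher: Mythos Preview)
Your proposal is essentially correct and follows the paper's strategy: the $K$-independent polynomial bound $|b_\mu(x)|+|\sigma_\mu(x)|\leq C_M(1+|x|^\beta)$ on $\mathcal{D}^{K,M}$ (with $C_M$ depending only on the $\mu$-constraints of $\Lambda^M$, not on $K$), the $\mathcal{F}^0$-measurability of $\tau^{K,M}$ to bound $\mathbb{E}\bigl[\int_0^{T\wedge\theta}|X_s|^\alpha\,ds\bigr]$ via $m_\alpha(\mu_s)\leq M$, and the coercivity of $\tilde b$ to make the It\^o drift of $|X|^r$ bounded by a constant. For part~(2) the paper does not even use Gronwall: once the drift and the quadratic-variation integrand are all dominated by $C_M(1+|X_s|^\alpha)$, the conditional identity bounds the integral directly by a constant, so your Gronwall step is harmless but superfluous.

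There is one point where your shortcut ``Conclusion follows as in part~(2)'' for part~(1) is too quick. After the $\epsilon$-trick on the BDG term, the remaining integral is of $|X_s|^{r-2+2\beta}$, which exceeds $|X_s|^r$ whenever $\beta>1$ (possible under~(A.3) with $\alpha_2\geq 1$), so a straight Gronwall on $t\mapsto\mathbb{E}[\sup_{s\leq t}|X_{s\wedge\theta}|^r]$ does not close. The paper handles this by a two-step argument: since the drift of $|X|^\gamma$ is bounded by a constant for \emph{every} $\gamma\geq 2$, one first obtains uniform-in-$K$ fixed-time bounds $\sup_{t}\mathbb{E}[|X^{K,M}_{t\wedge\theta}|^\gamma]\leq \mathbb{E}[|X_0|^\gamma]+C_MT$ for all $\gamma$, and then applies BDG to the SDE itself to bound $\mathbb{E}[\sup_t|X_{t\wedge\theta}|^r]$ by integrals of $|b|^r,|\tilde b|^r,|\sigma|^r,|\tilde\sigma|^r$, each controlled by the fixed-time bounds with a suitably large $\gamma'$. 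Alternatively, you can keep the coercive integral $-\tfrac{c_Mr}{2}\int|X_s|^{r+q-2}\,ds$ on the left-hand side and absorb the order-$(r-2+2\beta)$ BDG contribution into it via Young's inequality (using $q>2\beta$). Either fix is short, but one of them is needed.
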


In order to clarify the presentation of the overall proof of the main results, the proof of this lemma is postponed to Appendix \ref{app:proofmomentbounds}. The heart of this result is that the bounds does not depend on the parameter $K$, and so we will be able to make this parameter tend to infinity and keep a localization depending only on the behaviour of the mean field limit. Indeed, the second bound will allow us to justify that a particle $X^{K,M}$ cannot go at infinity in finite time as long as the mean field limit $\mu^{K,M}$ is controlled. In other words, the effect of the stopping times $\gamma^{K,M}$ vanishes when $K$ goes to infinity. This idea is formalized by the following lemma.

\begin{lem} \label{lem:relationsT.A}
   Let $M>0$. Then: $$\mathbb{P}\po \bigcup_{K\in \N^*} \bigcap_{k \geq K} \{ \gamma^{k,M} > \tau^{k,M}\wedge T \}\pf =1$$
 Moreover,  the sequence $(\tau^{K,M})_{K\in \N^*}$ becomes constant after a certain term $\mathbb{P}$ a.s.
\end{lem}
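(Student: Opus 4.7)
The key input is the uniform-in-$K$ moment bound of Lemma~\ref{lem:boundmoments}, which is precisely what allows the $x$-truncation at radius $K$ to be removed while keeping the $\mu$-truncation at level $M$. I would combine it with Markov's inequality and a Borel--Cantelli argument to establish the first assertion, and then exploit the pathwise uniqueness identity~\eqref{eq:unicitejusqueTA} together with the monotonicity $K\mapsto \tau^{K,M}$ to deduce the second one.

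For the first assertion, on the event $\{\gamma^{K,M}\leq \tau^{K,M}\wedge T\}$, path continuity of $X^{K,M}$ forces $|X^{K,M}_{\gamma^{K,M}}|=K$, and in particular
\[\sup_{t\in[0,T]}\bigl|X^{K,M}_{t\wedge \gamma^{K,M}\wedge \tau^{K,M}}\bigr|\;\geq\; K\,.\]
Markov's inequality at the exponent $2$ (Assumption~(B.2) being the worst case, since under (B.2') Lemma~\ref{lem:boundmoments} even yields bounds of all orders) then gives
\[\mathbb{P}\po \gamma^{K,M}\leq \tau^{K,M}\wedge T\pf \;\leq\; \frac{C_M}{K^{2}}\,,\]
with $C_M$ independent of $K$. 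This upper bound being summable, Borel--Cantelli yields that $\mathbb{P}$-almost surely, for all $K$ large enough one has $\gamma^{k,M}>\tau^{k,M}\wedge T$ for every $k\geq K$, which is the first claim.

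For the second assertion, I would work on the above probability-one event, fix $\omega$ in it, and pick a corresponding threshold $K_{0}(\omega)$. For any $K_0\leq k\leq k'$, the uniqueness identity~\eqref{eq:unicitejusqueTA} gives $\mu^{k,M}_t=\mu^{k',M}_t$ for every $t\leq \gamma^{k,M}\wedge \tau^{k,M}\wedge T = \tau^{k,M}\wedge T$. If $\tau^{k,M}\leq T$, continuity of $t\mapsto \mu^{k,M}_t$ and the openness of $\Lambda^M$ force $\mu^{k,M}_{\tau^{k,M}}\notin \Lambda^M$, whence $\mu^{k',M}_{\tau^{k,M}}\notin \Lambda^M$ as well, so $\tau^{k',M}\leq \tau^{k,M}$; combined with the monotonicity $\tau^{k',M}\geq \tau^{k,M}$, equality follows. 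If instead $\tau^{k,M}>T$, then $\mu^{k',M}=\mu^{k,M}$ stays in $\Lambda^M$ throughout $[0,T]$, so $\tau^{k',M}>T$ too. In both cases, $(\tau^{k,M}(\omega))_{k\geq K_0}$ is constant.

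The main difficulty, as I see it, is really contained in Lemma~\ref{lem:boundmoments}: the moment estimate must be genuinely independent of $K$, which is what turns Markov's inequality into a summable $K^{-2}$ bound rather than a useless $K$-dependent one. Once that uniform bound is granted, the rest of the proof is essentially bookkeeping on stopping times, together with the elementary fact that continuous paths exiting an open set must hit its complement.
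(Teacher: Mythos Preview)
Your proof is correct and follows essentially the same route as the paper: the first assertion via the uniform-in-$K$ moment bound of Lemma~\ref{lem:boundmoments}, Markov's inequality, and Borel--Cantelli; the second via the pathwise identity~\eqref{eq:unicitejusqueTA}, continuity of $t\mapsto\mu^{k,M}_t$, and the closedness of $(\Lambda^M)^c$, together with the monotonicity of $K\mapsto \tau^{K,M}$ already recorded before~\eqref{eq:unicitejusqueTA}. Your pathwise case analysis on $\{\tau^{k,M}\leq T\}$ versus $\{\tau^{k,M}>T\}$ is in fact slightly more streamlined than the paper's formulation through the random index $\mathcal K_M$ and the auxiliary sequence $(\nu_n)$, but the content is identical.
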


\begin{proof}
 Let $M>0$. From Markov's inequality and Lemma~\ref{lem:boundmoments},
    \[
           \mathbb{P}( \gamma^{K,M}\leq \tau^{K,M} \wedge T) =  \mathbb{P}(\underset{t\in[0,T]}\sup |X^{K,M}_{t\wedge \tau^{K,M}\wedge \gamma^{K,M}}|^2 \geq K^2) \leq \frac{C}{K^2}
       \]
    The first part of the lemma then follows from Borel Cantelli's lemma.

 We now turn to the proof of  the second statement. For $M>0$, we define the $\mathcal{F}$ random variable $\mathcal{K}_M$ corresponding to the first index $K$ (random), such that for all $k\geq K$, $T\wedge \tau^{k,M} < \gamma^{k,M}$, namely:
 $$\mathcal{K}_M=\inf\{K \in \N^*, \, \forall k \geq K,  \,T \wedge \tau^{k,M}<\gamma^{k,m}\}$$
 The previous result justify that $\mathcal{K}_M < \infty$ almost surely. We are going to show that the sequence $(\tau^{K,M})_{K\in \N^*}$ is constant after the rank $\mathcal{K}_M$.

 Let $k\in \N^*$, we are going to show that almost surely $\tau^{\mathcal{K}_M+k,M}=\tau^{\mathcal{K}_M,M}$. By construction, we already know that $\tau^{\mathcal{K}_M+k,M}\geq \tau^{\mathcal{K}_M,M}$. Moreover, by equation \eqref{eq:unicitejusqueTA}, $\mathbb{P}$ a.s, for all $K,k \in \N^*$:

 \begin{equation} \label{eq:constantapcr}
     \mu_{T\wedge \tau^{K,M} \wedge \gamma^{K,M}}^{K+k,M}=\mu_{T\wedge \tau^{K,M} \wedge \gamma^{K,M}}^{K,M}
 \end{equation}

Then, if we define the sequence of random variables $(\nu_n)_{n\in \N^*}$ by $\nu_n=\mu_{T\wedge \tau^{n,M} \wedge \gamma^{n,M}}^{n,M}$, by composition the sequence $(\nu_{\mathcal{K}_M+k})_{k\in \N^*}$ is also a sequence of random variables which is a.s constant by Equation \ref{eq:constantapcr}. Because of the definition of $\mathcal{K}_M$, we know that $T\wedge \tau^{\mathcal{K}_M,M} \wedge \gamma^{\mathcal{K}_M,M}=T\wedge \tau^{\mathcal{K}_M,M} $, and so:

 \begin{equation} \label{eq:comparaisonmu}
 \mu_{T\wedge \tau^{\mathcal{K}_M,M}}^{\mathcal{K}_M+k,M}=\mu_{T\wedge \tau^{\mathcal{K}_M,M}}^{\mathcal{K}_M,M}
 \end{equation}
 
 We can now distinguish cases depending on the value of $T\wedge \tau^{\mathcal{K}_M,M} $.  If $T < \tau^{\mathcal{K}_M,M} $, then $\tau^{\mathcal{K}_M,M}=\infty$ and the inequality $\tau^{\mathcal{K}_M,M}\geq \tau^{\mathcal{K}_M+k,M}$ is automatically verified. Otherwise, $T \geq \tau^{\mathcal{K}_M,M} $ and because $(\mu^{\mathcal{K},M}_t)$ has continuous paths and $(\Lambda^M)^c$ is closed in $\mathcal{P}_\alpha(\R^d)$, we know that $\mu_{T\wedge \tau^{\mathcal{K}_M,M}}^{K_M,M} \in  (\Lambda^M)^c$. Then, equation \eqref{eq:comparaisonmu} implies that $\mu_{T\wedge \tau^{\mathcal{K}_M,M}}^{\mathcal{K}_M+k,M} \in (\Lambda^M)^c$ and then by definition of $\tau^{\mathcal{K}_M+k,M}$, almost surely $\tau^{\mathcal{K}_M+k,M} \leq \tau^{\mathcal{K}_M,M}$, which concludes the proof.

\end{proof}

The previous Lemma is at the heart of the overall proof because it justifies that the sequence of processes $(\mu^{K,M}_{\cdot \wedge \tau^{K,M}\wedge \gamma^{K,M}})_{K\in \N^*}$ (and also $(X^{K,M}_{\cdot \wedge \tau^{K,M}\wedge \gamma^{K,M}})_{K\in \N^*}$) becomes constant after a certain rank, and the limit corresponds to the solution of the conditional McKean Vlasov equation up to a stopping time $\tau^M$ only depending on the conditional law. We can now use a localization procedure to define the solution of the conditional McKean Vlasov equation directed by the coefficients $b$, $\tilde{b}$, $\sigma$, $\tilde{\sigma}$ until the blowing time $\tau=\underset{M\rightarrow\infty}{\text{lim}}\tau^M$

\begin{lem} \label{lem:lastlemma}
    We define the following processes:
    \begin{equation}
        X_{\cdot\wedge \tau^M}:=\underset{K\rightarrow\infty}{\text{lim}}X^{K,M}_{\cdot \wedge \tau^{K,M}\wedge \gamma^{K,M} }\hspace{1cm} \mu_{\cdot\wedge \tau^M}:=\underset{K\rightarrow\infty}{\text{lim}}\mu^{K,M}_{\cdot\wedge \tau^{K,M}\wedge \gamma^{K,M} }
    \end{equation}
    These processes are well defined and $(X_{t\wedge \tau^M}, \mu_{t\wedge \tau^M})_{0\leq t \leq T}$ is the unique solution to the conditional McKean Vlasov equation \eqref{eq:stomomentdyn} up to the blowing time $\tau=\lim_{M\rightarrow \infty} \tau^M$.
\end{lem}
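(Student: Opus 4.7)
The plan is to combine Lemma~\ref{lem:relationsT.A} with the pathwise uniqueness identity~\eqref{eq:unicitejusqueTA} to show that the sequences in the statement are, almost surely, \emph{eventually constant} in $K$, so that the two limits are trivially achieved and inherit all the relevant properties from the truncated processes. The main obstacle will be the last point: correctly identifying the limiting $\mu_{\cdot \wedge \tau^M}$ as the regular flow of conditional distributions of $X_{\cdot \wedge \tau^M}$, which requires leveraging that $\tau^M$ depends only on the common noise $W^0$, so that conditioning and stopping commute.

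First, I fix $M>0$ and use Lemma~\ref{lem:relationsT.A} to define the a.s. finite random index $\mathcal{K}_M := \inf\{K \in \N^*,\ \forall k \geq K,\ T\wedge \tau^{k,M} < \gamma^{k,M}\}$. The same lemma yields $\tau^M := \lim_{K\to\infty} \tau^{K,M} = \tau^{k,M}$ a.s.\ for all $k \geq \mathcal{K}_M$, and since each $\tau^{K,M}$ is an $\mathbb{F}^{W^0}$ stopping time, so is $\tau^M$. By~\eqref{eq:unicitejusqueTA}, for any $K \geq \mathcal{K}_M$ and any $k \geq K$, one has pathwise
\[
X^{k,M}_{\cdot\wedge \tau^{K,M}\wedge \gamma^{K,M}} = X^{K,M}_{\cdot\wedge \tau^{K,M}\wedge \gamma^{K,M}}, \qquad \mu^{k,M}_{\cdot\wedge \tau^{K,M}\wedge \gamma^{K,M}} = \mu^{K,M}_{\cdot\wedge \tau^{K,M}\wedge \gamma^{K,M}}.
\]
For such $K$, the definition of $\mathcal{K}_M$ gives $\tau^{K,M}\wedge\gamma^{K,M}\wedge T = \tau^M\wedge T$, so the two sequences in the statement are a.s.\ eventually constant (in $K$), which makes the limits well-defined with no topological argument required.

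Next, I verify that $(X_{\cdot\wedge\tau^M},\mu_{\cdot\wedge\tau^M})$ satisfies the integral equation of Definition~\ref{def:soljusqueT.A} with reducing sequence $(\tau^M)_M$. For $K\geq \mathcal{K}_M$, the trajectory $(X^{K,M}_t,\mu^{K,M}_t)$ stays in $\mathcal{D}^{K,M}$ for $t\leq \tau^M\wedge T$, where the truncated coefficients $b^{K,M},\tilde b^{K,M},\sigma^{K,M},\tilde\sigma^{K,M}$ agree with $b,\tilde b,\sigma,\tilde\sigma$ by construction (Lemma~\ref{lem:troncatures}). Stopping the defining integral equation~\eqref{eq:condMVwithKandM} at $\tau^M$ therefore yields the integral equation for the original coefficients. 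For the conditional-law identification, since $\tau^M$ is $\mathbb{F}^{W^0}$-measurable, $\{t\leq \tau^M\}\in \mathcal{F}^0$, and the conditional Fubini argument used in Corollary~\ref{cor:edps} gives $\mathcal{L}(X_{t\wedge\tau^M}\mid \mathcal F^0) = \mathcal{L}(X^{K,M}_{t\wedge\tau^M}\mid \mathcal F^0) = \mu^{K,M}_{t\wedge\tau^M} = \mu_{t\wedge\tau^M}$ a.s.\ for any $K\geq\mathcal{K}_M$, so $\mu_{\cdot\wedge\tau^M}$ is indeed the $\mathbb{F}^{W^0}$-adapted regular flow of conditional laws provided by Proposition~\ref{prop:existcondlaw}.

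Finally, I handle consistency across $M$ and uniqueness. The inclusions $\mathcal{D}^{K,M}\subset \mathcal{D}^{K,M'}$ for $M\leq M'$ imply $\tau^M\leq \tau^{M'}$ and, by the same pathwise uniqueness argument applied to the truncations, $(X_{\cdot\wedge\tau^M},\mu_{\cdot\wedge\tau^M})$ coincides with $(X_{\cdot\wedge\tau^{M'}},\mu_{\cdot\wedge\tau^{M'}})$ on $[0,\tau^M]$ almost surely. Setting $\tau:=\lim_M \tau^M$, which agrees with the definition~\eqref{def:tauM_explosion} in Proposition~\ref{prop:stochasticmomentswelldefined}, these consistent definitions assemble into a single solution up to $\tau$. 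For strong uniqueness, any competing solution $(\tilde X,\tilde\mu)$ with its own reducing sequence $(\tilde\tau^M)_M$ solves~\eqref{eq:condMVwithKandM} up to $\tau^M\wedge \tilde\tau^M$ (because the coefficients agree on $\mathcal{D}^{K,M}$ and both processes remain in this set there), and the strong uniqueness part of Proposition~\ref{prop:existcoefflipschitz} forces the two solutions to coincide on $[0,\tau^M\wedge\tilde\tau^M]$; letting $M\to\infty$ concludes.
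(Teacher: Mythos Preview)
Your overall strategy matches the paper's: use Lemma~\ref{lem:relationsT.A} to show eventual constancy in $K$, inherit the integral equation from the truncated system, identify the conditional law, and prove uniqueness via the truncated problem. However, two steps are not quite rigorous as written.

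\textbf{Conditional-law identification.} You write $\mathcal{L}(X_{t\wedge\tau^M}\mid\mathcal F^0)=\mathcal{L}(X^{K,M}_{t\wedge\tau^M}\mid\mathcal F^0)$ ``for any $K\geq\mathcal K_M$''. But $\mathcal K_M$ depends on $\gamma^{K,M}$, hence on the \emph{private} noise $W$, so $\mathcal K_M$ is not $\mathcal F^0$-measurable. For a fixed deterministic $K$, the pathwise identity $X_{t\wedge\tau^M}=X^{K,M}_{t\wedge\tau^M}$ holds only on the event $\{K\geq\mathcal K_M\}$, which is not in $\mathcal F^0$; you therefore cannot equate the two conditional laws directly. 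The paper avoids this by passing to the limit: for each fixed $K$ one has $\mathbb E[h(X^{K,M}_{t\wedge\tau^{K,M}})\mid\mathcal F^0]=\mu^{K,M}_{t\wedge\tau^{K,M}}(h)$, and then conditional dominated convergence (using the eventual constancy pathwise) gives $\mathbb E[h(X_{t\wedge\tau^M})\mid\mathcal F^0]=\mu_{t\wedge\tau^M}(h)$.

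\textbf{Uniqueness.} You claim the competing solution $(\tilde X,\tilde\mu)$ solves the truncated equation~\eqref{eq:condMVwithKandM} up to $\tau^M\wedge\tilde\tau^M$ because ``both processes remain in $\mathcal D^{K,M}$ there''. But $\mathcal D^{K,M}$ also constrains the \emph{position} via $|x|<K$, and nothing prevents $|\tilde X_t|$ from exceeding $K$ before $\tau^M\wedge\tilde\tau^M$. The paper introduces the extra stopping times $\tilde\gamma^{K,M}=\inf\{t:|\tilde X_t|\geq K\}$ and $\tilde\tau^{K,M}=\inf\{t:\tilde\mu_t\notin\Lambda^M\}$, applies strong uniqueness for~\eqref{eq:condMVwithKandM} up to $\tau^{K,M}\wedge\gamma^{K,M}\wedge\tilde\tau^{K,M}\wedge\tilde\gamma^{K,M}$, and then lets $K\to\infty$ using Fatou and Lemma~\ref{lem:relationsT.A}. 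Your shortcut skips this localization in position and is not justified as stated.
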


Establishing this lemma is the last step of the overall proof. We first show that the processes $(X_{t\wedge \tau^M})_{0\leq t \leq T}$ and $(\mu_{t\wedge \tau^M})_{0\leq t \leq T}$ are well defined (and that they do not depend on the choice of $M$ up to time $\tau^M$), and then we show that $(X_{t\wedge \tau^M}, \mu_{t\wedge \tau^M})_{0\leq t \leq T}$ is the unique strong solution of the conditional McKean Vlasov equation \eqref{eq:stomomentdyn} until the blowing time $\tau$. Again, in order to clarify the presentation of the overall proof, the proof of this lemma is done in Appendix \ref{app:prooflastlemma}. This ends the proof of Proposition \ref{prop:stochasticmomentswelldefined}.

\subsection{Proof of Proposition \ref{prop:PoC}}

We begin the proof by justifying the statements made in Definition \ref{def:setuppropchaoscond}. For $K,M>0$, let $b^{K,M}, \sigma^{K,M}, \tilde{b}^{K,M}, \tilde{\sigma}^{K,M}$ be the regular truncations of the coefficients defined in Lemma~\ref{lem:troncatures}, and defined to coincide with the original coefficients on $\mathcal{D}^{K,M}$. These coefficients satisfy Assumption \ref{hyp:lipschitzbornés}, and we will make use of the results for Lipschitz coefficients presented in Section~\ref{sec:definition} for these coefficients. In the following, we will denote $\bar{X}^{n,K,M},\bar{\mu}^{n,K,M}$ the solution of the conditional McKean Vlasov equation with the truncated coefficients directed by the common noise $W^0$ and the personal bownian motion $W^n$. Because of Proposition \ref{prop:uniqueMFD}, the processes $(\bar{\mu}^{n,K,M})_n$ are indistinguishable, and thus so are the $(\bar{\mu}^{n})_n$ defined by their extension for $K,M \rightarrow \infty$ (this property is conserved through the localization procedure). Then, the blowing times $\bar{\tau}^n:=\underset{M\rightarrow \infty}{\lim}\underset{K\rightarrow \infty}{\lim} \bar{\tau}^{n,K,M}$ , $\bar{\tau}^{n,K,M}= \inf \{0\leq t \leq T , \quad \bar{\mu}_t^{n,K,M} \notin  \Lambda^M\}$ also coincide, justifying the statement made in Definition \ref{def:setuppropchaoscond}. This justifies the notation $\bar{\mu}=\bar{\mu}^1$ and $\bar{\tau}=\bar{\tau}^1$

Moreover, the map $$x \in (\R^d)^N \rightarrow \pi^N(x)=\frac{1}{N}\sum_{i=1}^N\delta_{x_i} \in \mathcal{P}_\alpha(\R^d)$$ is Lipschitz, so $x\mapsto b^{K,M}_{\pi^N(x)}(x)$ is also lipschitz by composition (and similarly for the other coefficients of the particle system). Then, the classical theory of SDE ensures that:
\begin{equation*} 
    dX_t^{i,N,K,M}=b^{K,M}_{\pi^N(X)}(X_t^{i,N}) \dd t+ \tilde{b}^{K,M}_{\pi^N(X)}(X_t^{i,N}) \dd t+\sigma^{K,M}_{\pi^N(X)}(X_t^{i,N}) \dd W_t^0 +\tilde{\sigma}^{K,M}_{\pi^N(X)}(X_t^{i,N}) \dd W_t^{i} \,,
\end{equation*}
is strongly uniquely solvable. A usual localization procedure ensures the well definition of the particle system until a blowing time $\tau^{(N)}:=\underset{K,M }{\sup} \tau^{(N),K,M}$ with 
\[\tau^{(N),K,M}=\inf \{0\leq t \leq T , \quad \pi_t^{N,K,M} \notin  \Lambda^M\}\]
(the explosion resulting from $X^{i,N} \notin B(0,K)$ for any index $i$ is also described by the explosion of $m_\alpha(\pi^{N})$, because if the norm of a particle explodes the moment of the empirical measure will also explode).

\medskip{}

We can now prove the conditional propagation of chaos.

 \begin{proof}[Proof of Proposition~\ref{prop:PoC}]
 
Fix $\varepsilon>0$. Take $K,M$ large enough so that $\mathbb P (\bar{\tau}^{K,M} \leqslant t < \bar{\tau}) \leqslant \varepsilon$, which is possible since $\bar{\tau}^{M}\rightarrow \bar{\tau}$ a.s. as $M\rightarrow \infty$ and $\bar{\tau}^{K,M}\rightarrow \bar{\tau}^M$ a.s as $K\rightarrow \infty$ . Since, besides, $\tau^{(N),K,M} \leq \tau^{(N)}$ for any $K,M>0$, we bound
\begin{eqnarray*}
\mathbb P \po \tau^{(N)} \leqslant  t <\bar{\tau} \pf & \leqslant & \mathbb P \po \tau^{(N)} \leqslant  t < \bar{\tau}^{K,M} \pf + \mathbb P \po \bar{\tau}^{K,M}\leqslant  t < \tau \pf   \\
& \leqslant & \mathbb P \po \tau^{(N),K,2M} \leqslant  t <\bar{\tau}^{K,M} \pf + \varepsilon\,.
\end{eqnarray*}
Let $r>0$ be the $\mathcal W_\alpha$ distance between $\mathcal D_M$ and $ (\mathcal{D}_{2M})^c$. The event $\{\tau^{(N),K,2M} \leqslant  t <\bar{\tau}^{K,M} \}$ implies that, at time $s=\tau^{(N),K,2M}$, $\bar{\mu}^{K,2M}_s$ and $\pi^{N,K,2M}_s$ are at a distance at least $r$.

Hence
\begin{equation}
\label{eq:demoPoc}
 \mathbb P \po \tau^{(N),K,2M} \leqslant  t <\bar{\tau}^{K,M} \pf  \leqslant \mathbb P \po \sup_{s\in[0,t]} \mathcal W_\alpha \po \bar{\mu}_t^{K,2M} ,\pi_s^{N,K,2M}\pf  \geqslant r \pf  \underset{N\rightarrow \infty}\longrightarrow 0\,,
\end{equation}
by standard propagation of chaos with the truncated coefficients with parameter $K,2M$ (Proposition~\ref{prop:propchaos}). As a conclusion, for any arbitrary $\varepsilon>0$, $\limsup_N\mathbb P \po \tau^N \leqslant  t < \bar{\tau} \pf \leqslant \varepsilon$, which concludes the proof of \eqref{eq:tauN->tau}.

Keeping the same $K$ and $M$ and using that $\mathcal W_\alpha(\pi^N_s,\bar{\mu}_s)=\mathcal W_\alpha(\pi_s^{N,K,2M},\bar{\mu}_s^{K,2M})$ for all $s\leqslant \tau^{(N), K,2M}\wedge \bar{\tau}^{K,M}$,
\begin{eqnarray*}
\lefteqn{\mathbb P \po \tau^{(N)} > t,\ \bar{\tau}> t \text{ and } \sup_{s\in[0,t]} \mathcal W_\alpha(\pi^N_s,\bar{\mu}_s) \geqslant \delta  \pf }\\
& \leqslant & \mathbb P \po \bar{\tau}^{K,M} \leqslant  t < \tau \pf + \mathbb P \po \tau^{(N),K,2M} \leqslant  t < \bar{\tau}^{K,M} \pf  + \mathbb P \po  \sup_{s\in[0,t]} \mathcal W_2(\pi_s^{N,K,2M},\bar{\mu}_s^{K,2M}) \geqslant \delta  \pf \\
& \leqslant &   \varepsilon + \underset{N\rightarrow\infty}o(1)\,,
\end{eqnarray*}
where we treated the last term again by classical propagation of chaos and the second term by \eqref{eq:demoPoc}. Again, this concludes.
 \end{proof}
 
\section{Controlling Explosion: Lyapunov condition and Regularized Dynamics}\label{sec:explosion} 

The potentially explosive behavior of the stochastic moment dynamics is theoretically intriguing but undesirable from a practical standpoint. The SMD is intended as a meaningful noisy counterpart to the Wasserstein gradient flow \eqref{eq:granularmedia}, where the added stochasticity should enhance controllability—allowing the process to escape the attraction basins of the underlying deterministic dynamics. However, as in the finite-dimensional setting, the resulting process remains metastable, and transitions between basins corresponding to different minimizers occur rarely. From an optimization perspective, this is problematic: identifying global minimizers of the energy \eqref{eq:energy} requires the process to evolve over a sufficiently long time to allow such transitions. Yet, if the solution ceases to exist due to explosion at time $\tau$, the process fails to explore the landscape fully—thus undermining one of the key advantages of SMD.

\medskip{}

This section addresses this issue using two complementary strategies. First, since explosion occurs when the measure $\mu_t$ enters a region where the coefficients $b$ and $\sigma$ are undefined, we propose a criterion to prevent such behavior. This criterion relies on the existence of a Lyapunov function tailored to the problem, ensuring that the process remains within a well-behaved region of the space. Second, since explosion results from the singular nature of $b$ and $\sigma$, we consider an alternative formulation in which these coefficients are regularized, leading to a globally defined and non-explosive evolution. Depending on the observable $f$ and on the coefficients $a,s$, one of the two strategies may be more adapted to the problem, as this will be illustrated in Section \ref{sec:exemples}.

\subsection{Non-explosion Criterion} \label{subsec:Lyap}

As explained in the introduction, the explosive behavior of the SMD is a consequence of the fact that the process $\mu_t(f)$ cannot take any possible value and must respect some constraints depending on the observable $f$ (like positivity for $f(x)=x^2$). Then, when the coefficients $a$ and $s$ are designed so that $\mu_t(f)$ always live in region of the space with no singularities, it is natural to expect the process to be non-explosive. In order to formulate this idea, we provide a criterion ensuring that the blowing time $\tau$ is almost surely infinite. It corresponds to the existence of a Lyapunov function for $(\mu_t)$ which implies that the process never reaches the region of the space where the coefficients are singular. Before presenting this non-explosion criteria, we first present a lemma that can be seen as an extension of \ref{cor:edps} for observables non necessarily bounded. 

\begin{lem} \label{lem:evolutionmoment}
    Under the settings of Proposition \ref{prop:stochasticmomentswelldefined}, let $M>0$ and $X_{\cdot \wedge \tau^M},\mu_{\cdot \wedge \tau^M}$ be the solution of equation \eqref{eq:stomomentdyn} up to $\tau^M$. Suppose that $g \in \mathcal{C}^2(\R^d,\R^n)$ with $n\in \N^*$, with at most a polynomial growth of order $r$, with $r>0$ such that:
    \begin{itemize}
    \item under Assumption~(B.2), \(r\) must satisfy \(0 < r \leq 2\);
    \item under Assumption~(B.2'),  \(r > 0\) can be taken arbitrarily.
\end{itemize}
    Then, 
    \begin{equation}
        \mu_{t\wedge \tau^M}(g)= \mu_0(g)+\int_0^{t\wedge \tau ^M}\mu_sL_{\mu_s}g(X_s)ds+\int_0^{t\wedge \tau^M} \mu_s(\nabla g ^T\sigma_{\mu_s})dW_s^0
    \end{equation}
\end{lem}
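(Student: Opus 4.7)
The plan is to apply Itô's formula to $g(X_{t\wedge\tau^M})$ along a solution of \eqref{eq:stomomentdyn} given by Proposition~\ref{prop:stochasticmomentswelldefined}, and then to take conditional expectation with respect to $\mathcal{F}^0$, exploiting the fact that $\tau^M$ is an $\mathbb{F}^{W^0}$ stopping time so that on this event $\mu_{s\wedge\tau^M}$ is a regular version of the conditional law of $X_{s\wedge\tau^M}$ given $\mathcal{F}^0$. Writing the generator
\[
L_\mu g(x) = (b_\mu+\tilde b_\mu)(x)^T\nabla g(x) + \tfrac12\bigl(\sigma_\mu\sigma_\mu^T + \tilde\sigma\tilde\sigma^T\bigr)(x,\mu):\nabla^2 g(x),
\]
Itô's formula yields the pathwise decomposition
\begin{equation*}
g(X_{t\wedge\tau^M}) = g(X_0) + \int_0^{t\wedge\tau^M}\!\! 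L_{\mu_s}g(X_s)\,ds + \int_0^{t\wedge\tau^M}\!\!\nabla g(X_s)^T\sigma_{\mu_s}(X_s)\,dW_s^0 + \int_0^{t\wedge\tau^M}\!\!\nabla g(X_s)^T\tilde\sigma(X_s,\mu_s)\,dW_s.
\end{equation*}

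The main technical work is then to secure enough integrability to average this identity term by term. On the event $\{s\leq\tau^M\}$, the definition \eqref{def:tauM_explosion} forces $(\mu_s[\nabla f^T\nabla f])^{-1}$ to be bounded by a power of $M$, $s(\mu_s(f))$ and $a(\mu_s(f))$ to be bounded by Assumption~\ref{hyp:coeffaets}, and $\mu_s(\sigma_{\mu_s}\sigma_{\mu_s}^T:\nabla^2 f)$ to be controlled through the $\alpha$-moment of $\mu_s$. Combined with the polynomial-growth control on $\nabla f$ and Assumption~\ref{hyp:driftetdiffusionenplus} on $\tilde b,\tilde\sigma$, each integrand in the Itô expansion is dominated by $C_{M}(1+|X_s|^{r+c})$ for a constant $c$ depending only on $f$ and $\alpha$. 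Lemma~\ref{lem:boundmoments} then provides uniform-in-$K$ moment bounds, in $L^2$ under Assumption (B.2) and in $L^q$ for every $q$ under (B.2'); passing through the construction of the limit process in Lemma~\ref{lem:lastlemma} by Fatou's lemma transfers these bounds to $X_{\cdot\wedge\tau^M}$. This is precisely what dictates the dichotomy $r\leq 2$ versus arbitrary $r$ stated in the lemma.

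Once this integrability is in hand, conditioning on $\mathcal{F}^0$ identifies the three terms. The $ds$ integral becomes $\int_0^{t\wedge\tau^M}\mu_s(L_{\mu_s}g)\,ds$ by conditional Fubini, since $\{s\leq\tau^M\}\in\mathcal{F}^0$ and $\mathbb{E}[\phi(X_s)\mid\mathcal{F}^0]=\mu_s(\phi)$ whenever the integrability holds. The $dW$ integral vanishes under $\mathbb{E}[\cdot\mid\mathcal{F}^0]$: in the canonical set-up of Definition~\ref{def:setup}, $W$ is an $\mathbb{F}^1$ Brownian motion independent of $\mathcal{F}^0$, and the quadratic variation is $L^1$ by the previous step, so the integral is an $\mathbb{F}^1$-martingale with zero $\mathcal{F}^0$-conditional mean. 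For the $dW^0$ term, approximation of the integrand by piecewise-constant $\mathbb{F}$-adapted processes followed by conditional Fubini produces $\mu_s(\nabla g^T\sigma_{\mu_s})$ on each piece, and Itô's isometry (controlled by the integrability obtained above) permits passage to the limit, exactly as in the proof of Corollary~\ref{cor:edps} adapted from \cite{hammersley2021weak}.

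The main obstacle will be the interchange of conditional expectation with the $dW^0$ stochastic integral, which requires uniform $L^2$ control of the integrand $\nabla g(X_s)^T\sigma_{\mu_s}(X_s)$ on $[0,t\wedge\tau^M]$. This is what forces the restriction on the polynomial order $r$ of $g$: without coercivity of $\tilde b$ one only has quadratic moment bounds on $X$ independent of the localization in space, so only $r\leq 2$ is admissible, whereas coercivity propagates moments of every order and lifts the restriction. The rest of the argument is a routine transposition of the proof of the stochastic Fokker--Planck equation for bounded test functions.
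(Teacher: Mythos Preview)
Your overall strategy---It\^o's formula, moment bounds from Lemma~\ref{lem:boundmoments} transferred to $X_{\cdot\wedge\tau^M}$ via Fatou, then conditional expectation term by term---matches the paper's. The one structural difference is that the paper does \emph{not} apply It\^o directly to $g(X_{t\wedge\tau^M})$. Instead it works one level down, at $g(X_{t\wedge\tau^{K,M}\wedge\gamma^{K,M}})$, where the spatial stopping time $\gamma^{K,M}$ makes every integrand genuinely bounded. At that level Lemma~(A.5) of \cite{hammersley2021weak} applies verbatim (it is stated for bounded integrands), giving the conditioned identity with indicators $\1_{s\leq\gamma^{K,M}}$ inside. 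Only then does the paper send $K\to\infty$, handling each term by the appropriate dominated convergence theorem (classical for the $ds$ integral, conditional for the left-hand side, stochastic for the $dW^0$ integral). Your route bypasses the $K$-localization and asks the conditioning lemma to swallow unbounded integrands directly; this can be made rigorous with the integrability you describe, but it is more delicate than invoking the bounded-integrand version and then passing to the limit.

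One small slip: the $dW$ integral is not an ``$\mathbb{F}^1$-martingale'', because the integrand $\nabla g(X_s)^T\tilde\sigma(X_s,\mu_s)$ depends on $\mu_s$, which is $\mathcal{F}^0$-measurable and not $\mathbb{F}^1$-adapted. The correct statement (and what Lemma~(A.5) of \cite{hammersley2021weak} provides) is that its $\mathcal{F}^0$-conditional expectation vanishes because $W$ remains a Brownian motion conditionally on $\mathcal{F}^0$. This is exactly why the paper routes the argument through that lemma rather than through a bare martingale property.
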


The proof of this lemma is based on the estimate given in Lemma \ref{lem:boundmoments} and on  \cite[Lemma~(A.5)]{hammersley2021weak}.

\begin{proof}
    We use the same notations than the one defined in the proof of Proposition \ref{prop:stochasticmomentswelldefined}. Let $K,M>0$. First, the application of Fatou's Lemma in the estimates given by Lemma \ref{lem:boundmoments} justifies that for all $M>0$:
    
\begin{enumerate}
    \item If $\tilde{b}$ verifies (B.2'), for all $r \geq0$: $\mathbb{E}(\underset{t \in [0,T]}\sup|X_{t\wedge \tau^{M} } |^r) \leq C$
        \item If $\tilde{b} $ satisfy assumption (B.2): $\mathbb{E}(\underset{t\in [0,T]} \sup|X_{t\wedge \tau^{M} }|^2) \leq C$

\end{enumerate}

    Moreover, the application of Ito's Formula to $g(X_{t\wedge \tau^{K,M} \wedge \gamma^{K,M}})$ gives that:
\begin{multline*}
            g(X_{t\wedge \tau^{K,M} \wedge \gamma^{K,M}})=g(X_0)+ \int_0^{t\wedge \tau^{K,M} \wedge \gamma^{K,M}} L_{\mu_s}g(X_s)ds \\
            +\int_0^{t\wedge \tau^{K,M} \wedge \gamma^{K,M}} \nabla g(X_s)^T\sigma_{\mu_s}(X_s) dW_s^0+\int_0^{t\wedge \tau^{K,M} \wedge \gamma^{K,M}} \nabla g(X_s)^T\tilde{\sigma}_{\mu_s}(X_s) dW_s
        \end{multline*}
    Applying the conditional expectation with respect to $\mathcal{F}_t^0$ on both side of the equation, and because every integrand is bounded before $\tau^{K,M} \wedge \gamma^{K,M}$, we can apply Lemma (A.5) from \cite{hammersley2021weak} to get:
     \begin{equation*}
        \begin{split}
            \mathbb{E} \Big(g(X_{t\wedge \tau^{K,M} \wedge \gamma^{K,M}}) | \mathcal{F}^0_t \Big)=& \mathbb{E} \Big(g(X_0) | \mathcal{F}^0_t \Big)+ \int_0^{t\wedge \tau^{K,M}}\mathbb{E} \Big( \1_{s\leq \gamma^{K,M}} L_{\mu_s}g(X_s) | \mathcal{F}^0_s \Big) ds \\
            &+  \int_0^{t\wedge \tau^{K,M} }\mathbb{E} \Big( \1_{s \leq \gamma_{K,M}} \nabla g(X_s)^T\sigma_{\mu_s}(X_s) | \mathcal{F}^0_s \Big) dW_s^0 +0\\
        \end{split}
    \end{equation*}

    We are now going to let $K\rightarrow \infty$ and justify the a.s convergence of every term of the equation. We will justify the convergences with Lemma \ref{lem:boundmoments}, to prove the domination hypothesis of the following dominated convergence theorems (classical, conditionnal and stochastic).

\medskip{}
    First, since $g(X_{t\wedge \tau^{K,M} \wedge \gamma^{K,M}}) \leq \underset{s\in [0,T]}  \sup C(1+|X_{s\wedge \tau^{M} }|^r) $ and thanks to the previous remark which shows that the right term of the equation is integrable, the conditional dominated convergence theorem ensures that:
    $$\underset{K\rightarrow \infty} \lim \mathbb{E} \Big(g(X_{t\wedge \tau^{K,M} \wedge \gamma^{K,M}}) | \mathcal{F}^0_t \Big)=\mathbb{E} \Big(g(X_{t\wedge \tau^{M} }) | \mathcal{F}^0_t \Big)= \mu_{t\wedge \tau^M}(g)\,,$$
where the last equality follows from conditional Fubini theorem.

Second, if $\tilde{b}$ satisfy assumption (B.2), then we can find a constant C (depending on $M$) such that for all $s$: $|L_{\mu_s}g(X_{\wedge \tau^{K,M} \wedge \gamma^{K,M}}) |\leq \underset{s\in [0,T]}  \sup C(1+|X_{s\wedge \tau^{M} }|^2) $, and again the dominated convergence theorem concludes. If $\tilde{b}$ satisfy assumption (B.2'), we can bound $|L_{\mu_s}g(X_{\wedge \tau^{K,M} \wedge \gamma^{K,M}}) |\leq  C(1+|X_{s\wedge \tau^{M} }|^{r'}) $  with $r'\geq2$ and the first statement of Lemma \ref{lem:boundmoments} justifies its integrability. The dominated convergence theorem to get that $\mathbb{P}$ a.s:

$$\int_0^{t\wedge \tau^{K,M}}\mathbb{E} \Big( \1_{s\leq \gamma^{K,M}} L_{\mu_s}g(X_s) | \mathcal{F}^0_s \Big) ds  \underset{K \rightarrow \infty}{\longrightarrow} \int_0^{t\wedge \tau^{M}}\mu_sL_{\mu_s}g \, ds \,.$$

\medskip{}

Finally, because $$|\mathbb{E} \Big( \1_{s \leq \gamma_{K,M}} \nabla g(X_s)^T\sigma_{\mu_s}(X_s) | \mathcal{F}^0_s \Big)| \leq \mathbb{E}( \underset{t\in [0,s]}   \sup C(1+|X_{t\wedge \tau^{M} }|^r) | \mathcal{F}^0_s)$$ and because the dominated process is locally bounded and progressive (again because of the remark done at the beginning of the proof), we can apply the dominated convergence theorem for stochastic integrals to get that:

$$ \int_0^{t\wedge \tau^{K,M} }\mathbb{E} \Big( \1_{s \leq \gamma_{K,M}} \nabla g(X_s)^T\sigma_{\mu_s}(X_s) | \mathcal{F}^0_s \Big) dW_s^0 \underset{K \rightarrow \infty}{\longrightarrow}  \int_0^{t\wedge \tau^{M} }\mu_s( \nabla g^T\sigma_{\mu_s})  dW_s^0 $$ in probability. Then, we can extract a subsequence that converges almost surely, and taking the limit for that subsequence we can conclude that a.s:

$$\mu_{t\wedge \tau^M}(g)= \mu_0(g)+\int_0^{t\wedge \tau ^M}\mu_sL_{\mu_s}gds+\int_0^{t\wedge \tau^M} \mu_s(\nabla g ^T\sigma_{\mu_s})dW_s^0\,,$$
which proves the result.
\end{proof}

Under the settings of Proposition \ref{prop:stochasticmomentswelldefined}, we can define \[\Lambda:= \bigcup_{M \in \N^*} \Lambda^M=\{ \mu \in \mathcal{P}_\alpha(\R^d), \  d(\mu(f), \mathcal{S}) >0  \}\]
and $\mathcal{O}:=\text{Ev}_f(\mathcal{P}_2(\R^d)) \subset \R^p$ with $\text{Ev}_f: \mu \mapsto \mu(f)$. The previous lemma will allow us to study the evolution along the random flow $\mu_{\cdot\wedge \tau}$ of functions $\mathbb{V}: \mu \in  \Lambda\rightarrow \R^+$ depending on $\mu$ only through its moments $\mu(g)$, with $g$ verifying the assumptions of the previous lemma. Then, if we can find an appropriate Lyapunov function $\mathbb{V}$ for the evolution, we will able to justify that the blowing time $\tau=\infty$ almost surely. In order to state our non-explosion criteria, we need to make the following assumption on the observable $f$ and on its relation with the coefficients $a$ and $s$.

\begin{assu} \label{hyp:nonexplosion}
    Suppose that $f\in \mathcal{C}^2(\R^d,\R^p)$ verify either Assumption (A.1) or (A.2) (in both cases $\alpha=2$). Suppose that the coefficients $a$ and $s$ are such  that the following holds:
    $$ \text{For all } \mu \in \mathcal{P}_2(\R^d), \quad  d(\mu(f), \mathcal{S}) >0 \Longrightarrow \det( \mu(\nabla f^T \nabla f))>0$$ 
\end{assu}

\begin{prop} \label{prop:nonexplosioncriteria}
    
    Under the setting of Proposition \ref{prop:stochasticmomentswelldefined}, suppose that $f$ verify Assumption \ref{hyp:nonexplosion}. Suppose that it exist $n\in \N^*$ and $g\in \mathcal{C}^2(\R^d,\R^n)$ verifying the conditions of Lemma \ref{lem:evolutionmoment} and $V\in \mathcal{C}^2(\R^n, \R^+)$ such that $\mathbb{V}:\mu \in \Lambda \mapsto V(\mu(g))$ verify the following conditions:

    \begin{enumerate}
        \item $\mathbb{V}(\mu)\rightarrow \infty$ when $d(\mu(f),\mathcal{S})\rightarrow0$ and when $m_2(\mu)\rightarrow \infty$
        \item For all $M>0$, $d\mathbb{V}(\mu_{t \wedge \tau^M})=G(\mu_{t \wedge \tau^M})dt +dA_{t\wedge \tau^M}$ with $A_{\cdot \wedge \tau^M}$ a local martingale, and it exists a positive constant $C\geq 0$ such that for $\mu \in \Lambda$:
        \begin{equation} \label{eq:lyapG}
            G(\mu)\leq C \mathbb{V}(\mu)\,.
        \end{equation}
    \end{enumerate}
    
    Then $\tau=\infty$ $\mathbb{P}$ a.s.
\end{prop}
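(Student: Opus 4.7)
The overall strategy is the classical Lyapunov non-explosion argument: exploit hypothesis~(2) to derive, via Gronwall's lemma, a uniform-in-$M$ exponential bound on $\mathbb{E}[\mathbb{V}(\mu_{t\wedge\tau^M})]$; then combine this with hypothesis~(1) and Markov's inequality to obtain $\mathbb{P}(\tau^M\leq T)\to 0$ as $M\to\infty$ for every $T>0$, which yields $\tau=\lim_M\tau^M=\infty$ almost surely.

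First, to handle the local martingale $A_{\cdot\wedge\tau^M}$ appearing in hypothesis~(2), I would introduce the localizing sequence $\eta_n:=\inf\{s\geq 0:\mathbb{V}(\mu_s)\geq n\}$, so that $\mathbb{V}(\mu_s)\leq n$ on $[0,\tau^M\wedge\eta_n]$ and $A_{\cdot\wedge\tau^M\wedge\eta_n}$ is a true martingale with zero expectation. Taking expectation in the semimartingale decomposition $\mathbb{V}(\mu_{t\wedge\tau^M\wedge\eta_n})=\mathbb{V}(\mu_0)+\int_0^{t\wedge\tau^M\wedge\eta_n}G(\mu_s)\,ds+A_{t\wedge\tau^M\wedge\eta_n}$ and using $G\leq C\mathbb{V}$ yields
\[\mathbb{E}\bigl[\mathbb{V}(\mu_{t\wedge\tau^M\wedge\eta_n})\bigr]\leq \mathbb{V}(\mu_0)+C\int_0^t \mathbb{E}\bigl[\mathbb{V}(\mu_{s\wedge\tau^M\wedge\eta_n})\bigr]\,ds,\]
so Gronwall's lemma gives $\mathbb{E}[\mathbb{V}(\mu_{t\wedge\tau^M\wedge\eta_n})]\leq \mathbb{V}(\mu_0)e^{Ct}$. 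Since $\mu_s\in\Lambda$ for $s<\tau^M$ implies $\mathbb{V}(\mu_s)<\infty$ along continuous paths, $\eta_n\uparrow\infty$ on $\{s<\tau^M\}$, and Fatou's lemma delivers the uniform-in-$M$ bound $\mathbb{E}[\mathbb{V}(\mu_{t\wedge\tau^M})]\leq \mathbb{V}(\mu_0)e^{Ct}$.

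Second, on the event $\{\tau^M\leq T\}$, path continuity forces $\mu_{\tau^M}\in\partial\Lambda^M$, so at least one of the three defining constraints of $\Lambda^M$ is saturated: $m_2(\mu_{\tau^M})=M$, $d(\mu_{\tau^M}(f),\mathcal{S})=1/M$, or $\det(\mu_{\tau^M}(\nabla f^T\nabla f))=1/M$. Setting $h(M):=\inf\{\mathbb{V}(\mu):\mu\in\partial\Lambda^M\cap\Lambda\}$, Markov's inequality gives
\[\mathbb{P}(\tau^M\leq T)\leq \mathbb{P}\bigl(\mathbb{V}(\mu_{T\wedge\tau^M})\geq h(M)\bigr)\leq \frac{\mathbb{V}(\mu_0)e^{CT}}{h(M)},\]
so it remains to prove $h(M)\to\infty$. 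For the first two boundary cases this follows directly from hypothesis~(1).

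The main obstacle is the third boundary case $\det(\mu_{\tau^M}(\nabla f^T\nabla f))=1/M$, for which hypothesis~(1) gives no information on $\mathbb{V}$. This is exactly where Assumption~\ref{hyp:nonexplosion} must be invoked: it ensures $\det(\mu(\nabla f^T\nabla f))>0$ on all of $\Lambda$. My plan is to upgrade this pointwise positivity to a uniform lower bound on bounded subsets: prove by contradiction that for any $K,\epsilon>0$,
\[\inf\bigl\{\det(\mu(\nabla f^T\nabla f)):m_2(\mu)\leq K,\ d(\mu(f),\mathcal{S})\geq\epsilon\bigr\}>0,\]
extracting a weak limit $\mu$ from a minimizing sequence via tightness (from the $m_2$-bound) and using continuity of $\mu\mapsto\mu(f)$ and $\mu\mapsto\mu(\nabla f^T\nabla f)$ to reach a contradiction with Assumption~\ref{hyp:nonexplosion}. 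Under Assumption~(A.2) the continuity holds directly since $f$ and $\nabla f^T\nabla f$ are bounded continuous; under (A.1), where $f$ is only quadratic, the argument is more delicate because weak convergence combined with a pure $m_2$-bound does not in general yield $\mu_n(f)\to\mu(f)$, but this is repaired using the higher-order moment bounds of Lemma~\ref{lem:boundmoments} (which under Assumption~(B.2') hold at any order) providing the stronger $\mathcal{W}_{2+\delta}$-relative compactness. Once this uniform lower bound is established, for $M$ large the boundary piece $\{\det=1/M\}\cap\{m_2\leq K,\ d(f,\mathcal{S})\geq\epsilon\}$ becomes empty for any fixed $K,\epsilon$, so exit via $\det=1/M$ necessarily causes $m_2\to\infty$ or $d(f,\mathcal{S})\to 0$, and hypothesis~(1) closes the argument, yielding $h(M)\to\infty$ and hence $\tau=\infty$ a.s.
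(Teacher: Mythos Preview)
Your core Lyapunov argument is correct and essentially equivalent to the paper's: where you take expectation, use $G\leq C\mathbb V$, Gronwall and Fatou to obtain $\mathbb E[\mathbb V(\mu_{t\wedge\tau^M})]\leq \mathbb V(\mu_0)e^{Ct}$, the paper instead applies It\^o to $e^{-C(t\wedge\tau^M)}\mathbb V(\mu_{t\wedge\tau^M})$ to get a nonnegative local supermartingale, introduces $\tau_r^{\mathbb V}=\inf\{t:\mathbb V(\mu_t)\geq r\}$, and reads off $\mathbb P(\tau_r^{\mathbb V}\leq T)\leq \mathbb V(\mu_0)e^{CT}/r$ directly from the optional stopping inequality. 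The paper's version is slightly slicker because it bypasses your auxiliary localisation $\eta_n$ and your boundary function $h(M)$: once one knows $\tau_r^{\mathbb V}\leq\tau$, the conclusion $\tau=\infty$ follows from $\tau\geq\sup_r\tau_r^{\mathbb V}=\infty$ in one line.

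You are right, however, to worry about the third exit mode $\det(\mu(\nabla f^T\nabla f))=1/M$. The paper's assertion that ``because of the first assumption on $V$, $\tau_r^{\mathbb V}\leq\tau$'' also hinges on ruling out explosion purely through $\det\to 0$ while $m_2$ stays bounded and $d(\mu(f),\mathcal S)$ stays away from $0$; the paper leaves this to Assumption~\ref{hyp:nonexplosion} without spelling out the uniformity step. Your compactness argument is the natural way to fill this in, but your treatment of the (A.1) case takes a wrong turn: you propose to invoke Lemma~\ref{lem:boundmoments} under (B.2') to get higher-moment bounds, which is problematic for two reasons --- those bounds are only along the process trajectory (your minimising sequence need not lie on it), and under (B.2) no such bounds are available. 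The clean fix is simpler. Under (A.1) each $f_k$ is quadratic, so both $\mu(f)$ and $\mu(\nabla f^T\nabla f)$ are explicit continuous (polynomial) functions of the finite-dimensional vector $(\mu(x),\mu(xx^T))$; the set $\{(m,\Sigma):\Sigma\geq mm^T,\ \mathrm{tr}(\Sigma)\leq K\}$ is compact in $\R^d\times\R^{d\times d}$, and Assumption~\ref{hyp:nonexplosion} transfers to a statement on this finite-dimensional set, where pointwise positivity of a continuous function on a compact set with $d(\cdot,\mathcal S)\geq\epsilon$ immediately gives the uniform lower bound. No weak limits of measures or higher moments are needed.
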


The proof of this result is a direct consequence of the previous lemma and of a classical Lyapunov argument for semi-martingales. The choice of the observable $g$ and of the function $V$ is voluntarily left general so that this criteria can be applied for various stochastic moment dynamics processes.

\begin{proof}

The proof of this proposition is based on the previous lemma. Indeed, it justifies that we can apply Ito's formula to $\mathbb{V}(\mu_{t\wedge \tau^M})=V(\mu_{t\wedge\tau^M}(g))$, and we can apply a standard Lyapunov argument to justify that it cannot explode in finite time. Indeed, because of the previous Lemma, if we take $C$ such that $G(\mu)\leq C \mathbb{V}(\mu)$ for $\mu \in \Lambda$, the application of Ito's formula to $e^{-C(t\wedge \tau^M)}\mathbb{V}(\mu_{t\wedge \tau^M})$ gives that:

\begin{equation*}
    e^{-C(t\wedge \tau^M)}\mathbb{V}(\mu_{t\wedge \tau^M}) = \mathbb{V}(\mu_0)+\int_0^te^{-C(t\wedge \tau^M)} \big[ G(\mu_{s\wedge \tau^M})-C\mathbb{V}(\mu_s)  \big]ds +A_t
\end{equation*}
where $(A_t)$ is a local martingale. Thanks to~\eqref{eq:lyapG}, we get that $e^{-C(s\wedge \tau^M)}\mathbb{V}(\mu_{t\wedge \tau^M})$ is a (local) supermartingale. Then, writing $\tau_r^\mathbb{V}=\inf \{t \geq 0, \quad \mathbb{V}(\mu_t) \geq r \}$, because of the first assumption made on $V$, we know that for all $r>0$, $\tau_r^\mathbb{V} \leq \tau$. Then, if $(T^N)_{N \in \N^*}$ is a sequence of stoping times reducing $A_t$ we get that:

\begin{equation*}
    \begin{split}
        \mathbb{V}(\mu_0) \geq& \mathbb{E}(e^{-C(T\wedge \tau^M \wedge \tau^\mathbb{V}_r \wedge T^N)}\mathbb{V}(\mu_{T\wedge \tau^M \wedge \tau^\mathbb{V}_r \wedge T^N})) \\
        \geq & r e^{-CT} \mathbb{P}(\tau^{\mathbb{V}}_r \leq T\wedge \tau^M \wedge T^N) \\
        \geq & r e^{-CT}\mathbb{P}(\tau^{\mathbb{V}}_r \leq T)\,,
    \end{split}
\end{equation*}
where the last inequality is obtained by monotone convergence. Taking the limit for $r\rightarrow \infty$, we get that $\tau^\mathbb{V}_r=\infty $ a.s. Then, because $\tau \geq \tau^\mathbb{V}_r$, we get that a.s $\tau=\infty$, which proves the result.

\end{proof}

Even for simple (but not trivial) examples, finding an appropriate function $V$ that satisfies the previous proposition is very difficult for dynamics combining both a Wasserstein Gradient descent ($\tilde{b},\tilde{\sigma}\ne 0$) and stochastic moment part ($b,\sigma \ne 0$). However, considering a dynamic without additional gradient descent part ($\tilde{b},\tilde{\sigma}=0$, it is possible to deduce from the previous criteria that the evolution is non explosive if the control on the moment is well designed.

\begin{cor} \label{cor:nonexplosion}
    Under the settings of proposition \ref{prop:stochasticmomentswelldefined} with $\tilde{b},\tilde{\sigma}=0$, suppose that $f$ and the coefficients $a,s$ satisfy Assumption \ref{hyp:nonexplosion}, and suppose that it exists $V \in \mathcal{C}^2(\mathcal{O} \setminus \mathcal{S}, \R^+)$ and a positive constant $C\geq 0$ such that for all $z\in \R^p \setminus \mathcal{S}$:
    \[ a(z)\cdot \nabla V(z)+\frac{1}{2} s(z)s(z)^T : \nabla ^2V(z) \leq C \,V(z) \] \label{eq:lyapV}
    Then $\tau=\infty$ a.s.
\end{cor}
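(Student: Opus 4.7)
The plan is to apply Proposition~\ref{prop:nonexplosioncriteria} with the choice $g := f$ and the Lyapunov functional $\mathbb{V}(\mu) := V(\mu(f))$; the work then reduces to verifying its two hypotheses.

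For Condition~2, I would use that the SMD coefficients are constructed in Section~\ref{sec:derivation} precisely so that
\[
\mu\bigl[(\nabla f)^T b_\mu + \tfrac{1}{2} \sigma_\mu \sigma_\mu^T : \nabla^2 f\bigr] = a(\mu(f)), \qquad \mu\bigl[(\nabla f)^T \sigma_\mu\bigr] = s(\mu(f)).
\]
Since $\tilde{b} = \tilde{\sigma} = 0$, applying Lemma~\ref{lem:evolutionmoment} to $g = f$---which is legitimate since, under either (A.1) or (A.2), $f$ has growth of order at most $2$, and assumption (B.2) is trivially satisfied---yields
\[
d\,\mu_{t\wedge\tau^M}(f) = a\bigl(\mu_{t\wedge\tau^M}(f)\bigr)\,dt + s\bigl(\mu_{t\wedge\tau^M}(f)\bigr)\,dW^0_{t\wedge\tau^M}.
\]
Then Itô's formula applied to $V\bigl(\mu_{t\wedge\tau^M}(f)\bigr)$ gives $d\mathbb{V}(\mu_{t\wedge\tau^M}) = G(\mu_{t\wedge\tau^M})\,dt + dA_{t\wedge\tau^M}$ where $A$ is a local martingale and
\[
G(\mu) = a(\mu(f)) \cdot \nabla V(\mu(f)) + \tfrac{1}{2} s(\mu(f)) s(\mu(f))^T : \nabla^2 V(\mu(f)),
\]
which is bounded by $C\,\mathbb{V}(\mu)$ thanks to the hypothesized Lyapunov inequality.

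For Condition~1, I need $\mathbb{V}(\mu) \to \infty$ when either $d(\mu(f),\mathcal{S})\to 0$ or $m_2(\mu)\to\infty$. The first limit is the classical properness of a Lyapunov function at its singular set, and I would treat it as an implicit part of the hypothesis on $V$ (if not already true it can be enforced by replacing $V$ with $\varphi \circ V$ for an increasing convex $\varphi$, which preserves the Lyapunov inequality up to adjusting $C$). The second divergence splits according to the observable: under (A.1), $f$ is quadratic so $|\mu(f)|$ directly controls $m_2(\mu)$, and coercivity of $V$ on $\R^p$ yields the divergence; under (A.2), $f$ is bounded so $\mu(f)$ stays in a compact set, but then $b_\mu,\sigma_\mu$ are bounded on the admissible region---by Assumption~\ref{hyp:nonexplosion} combined with the boundedness of $\nabla f,\nabla^2 f$---and a Gronwall estimate using Lemma~\ref{lem:evolutionmoment} applied to $g(x) = |x|^2$ produces a uniform bound on $m_2(\mu_{t\wedge\tau^M})$, ruling out $m_2$-blowup.

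The main obstacle is the bounded case (A.2), where the coupling between the $\mu(f)$-dynamics and $m_2(\mu)$ is weak; the uniform $L^2$ control there relies crucially on the non-degeneracy of $\mu[\nabla f^T\nabla f]$ supplied by Assumption~\ref{hyp:nonexplosion}, which must be exploited carefully to bound the admissible-region operator norms of $b_\mu$ and $\sigma_\mu$. Once both conditions of Proposition~\ref{prop:nonexplosioncriteria} are established, its conclusion gives $\tau = \infty$ $\mathbb{P}$-a.s.
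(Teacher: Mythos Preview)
Your approach is exactly the paper's: apply Proposition~\ref{prop:nonexplosioncriteria} with $g=f$ and $\mathbb V(\mu)=V(\mu(f))$. The paper's own proof is a single sentence noting only that, with $\tilde b=\tilde\sigma=0$, Condition~2 of the proposition reduces to the assumed inequality on $V$; your verification of Condition~2 via the moment identities and It\^o's formula is correct and just makes this explicit.

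Where you go further than the paper is Condition~1, and here there is a genuine gap---but it is a gap in the \emph{statement} of the corollary rather than in your reading of it. The corollary as written does not assume properness of $V$, yet Proposition~\ref{prop:nonexplosioncriteria} requires $\mathbb V(\mu)\to\infty$ both as $d(\mu(f),\mathcal S)\to 0$ and as $m_2(\mu)\to\infty$. The paper simply does not address this; in the worked examples of Section~\ref{sec:exemples} the chosen $V$ happens to satisfy it. You are right to flag the issue and to treat divergence near $\mathcal S$ as an implicit hypothesis.

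Your attempts to \emph{derive} the $m_2$-divergence, however, do not quite close. Under (A.1), ``$|\mu(f)|$ controls $m_2(\mu)$'' fails for general quadratic observables (take $f_k(x)=x^TAx$ with indefinite $A$), and coercivity of $V$ on $\R^p$ is not assumed anyway. Under (A.2), the claim that $b_\mu,\sigma_\mu$ are bounded on the admissible region is unjustified: Assumption~\ref{hyp:nonexplosion} gives only $\det(\mu[\nabla f^T\nabla f])>0$ pointwise, not a uniform lower bound, so $(\mu[\nabla f^T\nabla f])^{-1}$ need not be bounded even when $d(\mu(f),\mathcal S)$ is bounded away from zero and $\nabla f$ is bounded. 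The Gronwall step therefore lacks the uniform coefficient bound it needs. The honest resolution is the one you already suggest: read the corollary as implicitly requiring $V$ to be a proper Lyapunov function (blowing up at $\partial\mathcal O$, at $\mathcal S$, and at infinity), which is how the paper uses it in practice.
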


The corollary is a direct application of Proposition \ref{prop:nonexplosioncriteria}, because under the simplifying assumption that $\tilde{b},\tilde{\sigma}=0$, the condition \eqref{eq:lyapG} is rewritten as equation \eqref{eq:lyapV}. A consequence of this result is that knowing the constraints on the evolution for an observable $f$ (i.e. understanding when $\mu(\nabla f^T\nabla f)$ is singular) is enough to define a non-explosive process with some controllability on this specific moment, because it is possible to choose an appropriate control that verifies the conditions of the previous corollary. 

\medskip{}

The non-explosion criteria \ref{prop:nonexplosioncriteria} is well suited when the constraints required by the moments are easy to describe (for instance positivity), and when there is no additional Wasserstein gradient descent terms. However, when a large number of moments is considered, or when considering more complex observables than polynomials, having a clear description of these constraints may be difficult, so designing a suitable control on the moments may not be possible. That is why in the following subsection we introduce a regularized version of the dynamics, designed not to explodes in finite time.

 \subsection{Regularized process}\label{ref:regularise}
 
 In this section we modify the dynamics initially derived in Section~\ref{sec:derivation} to remove the singularities. Obviously, this is not possible while ensuring that the moments $\mu_t(f)$ satisfy exactly a prescribed autonomous SDE, in general. But the fact that the moments $\mu_t(f)$ satisfy a closed equation is actually not a critical point in our situation in view of our initial motivation. What is useful is to have some elliptic noise on these moments, to ensure some controllability. By taking the stochastic moments dynamics of Section~\ref{sec:derivation} as our starting point and then modifying it to suppress the explosive behavior, basically, we ensure the presence of an elliptic noise on $\mu_t(f)$ \emph{as long as} the process stays within the domain defined by the constraints intrinsically  associated to the observables $f$. This is exactly what we need in practice.
 
 There would be several ways to modify the dynamics. The modification could be confined  close to the boundary of the admissible domain, using reflection, a repulsive force, or some truncation of the singular coefficients, so that $\mu_t(f)$ exactly solves the prescribed SDE when $\mu_t$ is far away from this boundary. However, in the following we consider a simpler regularization, which consist in replacing the semi-definite matrix $\mu \co (\na f)^T \na f\cf$ by the non-singular one $\eta I_p + \mu \co (\na f)^T \na f\cf$ for some $\eta>0$.
 
 In other words, for $\eta>0$, the regularized stochastic moment coefficients are defined as 
 \begin{equation} \label{eq:coeffsp-regularized}
     \left\{
        \begin{array}{lll}
        \sigma_{\mu}^\eta(x) &=& \nabla f (x)\po \eta I_p + \mu[\nabla f^T \nabla f]\pf ^{-1} s(\mu(f))\\
    
        b_{\mu}^\eta(x) &=& \nabla f (x) \po \eta I_p + \mu[\nabla f^T \nabla f]\pf ^{-1} [a(\mu(f))-\frac{1}{2}\mu(\sigma_{\mu}^\eta(\sigma_{\mu}^\eta)^T :\nabla^2 f)] \,.
        \end{array}\right. 
\end{equation}

The following proposition states that under some suitable assumptions the regularized stochastic moment dynamic does not explode.

\begin{prop}
    Suppose that $f$ verify assumptions (A.1) or (A.2) ($\alpha=2$). Suppose that the coefficients $a$ and $s$ are globbaly Lipschitz and bounded ($\mathcal{S}=\emptyset$), and that $\tilde{b}$ and $\tilde{\sigma}$ verify assumption \ref{hyp:driftetdiffusionenplus}. Then, for $\eta>0$, the conditional McKean Vlasov equation with coefficients $b^\eta, \sigma^\eta, \tilde{b}, \tilde{\sigma}$ is strongly uniquely solvable in $[0,T]$ (the solution is not explosive).
\end{prop}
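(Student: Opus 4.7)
The plan is to show that the regularization removes every obstruction to global existence that was present in Proposition \ref{prop:stochasticmomentswelldefined}, so that the localization machinery developed there yields a solution defined on the whole interval $[0,T]$.

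First I would observe that, for every $\mu\in\mathcal P_2(\R^d)$, the matrix $\eta I_p + \mu[\nabla f^T \nabla f]$ is symmetric positive with smallest eigenvalue at least $\eta$, so its inverse has operator norm bounded by $1/\eta$ uniformly in $\mu$. Combined with the assumption $\mathcal S=\emptyset$ and the boundedness of $a$ and $s$, the two sources of singularity that produced the explosion in Proposition \ref{prop:stochasticmomentswelldefined} (the conditions $\det(\mu[\nabla f^T\nabla f])=0$ and $d(\mu(f),\mathcal S)=0$) are entirely suppressed. Running the truncation/localization argument of Lemmas \ref{lem:regucoefsursomaines}--\ref{lem:lastlemma} with $b^\eta,\sigma^\eta$ in place of $b,\sigma$ therefore produces a solution up to a stopping time $\tau=\lim_M \tau^M$ with $\tau^M=\inf\{t\geqslant 0,\ \mu_t(|x|^2)\geqslant M\}$; only the moment obstruction remains.

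Next I would check that the regularized coefficients enjoy polynomial growth controlled by the second moment of $\mu$. Under (A.2), $\nabla f$ and $\nabla^2 f$ are bounded, so $|\sigma^\eta_\mu(x)|$ and $|b^\eta_\mu(x)|$ are uniformly bounded (by a constant depending on $\eta,\|s\|_\infty,\|a\|_\infty$). Under (A.1), $|\nabla f(x)|\leqslant C(1+|x|)$ and $\nabla^2 f_k$ is constant, which after a direct estimate of $\mu[\sigma^\eta (\sigma^\eta)^T]\leqslant C(1+m_2(\mu))/\eta^2$ yields
\[
|\sigma^\eta_\mu(x)|\leqslant C_\eta (1+|x|),\qquad |b^\eta_\mu(x)|\leqslant C_\eta (1+|x|)(1+m_2(\mu)).
\]
In particular $b^\eta$ and $\sigma^\eta$ are locally Lipschitz and globally of at most linear growth in $x$ for $m_2(\mu)$ fixed.

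Finally I would apply It\^o's formula to $|X_{t\wedge\tau^M}|^2$, as in Lemma \ref{lem:boundmoments}. In case (B.2), the polynomial bounds of order $\alpha=2$ on $\tilde b,\tilde\sigma$ combined with the previous estimates on $b^\eta,\sigma^\eta$ give an inequality of the form $\dd \mathbb{E}|X_{t\wedge\tau^M}|^2 \leqslant C(1+\mathbb{E}|X_{t\wedge\tau^M}|^2)\dd t$ after taking expectations and applying Fubini, and Gronwall yields a bound on $\mathbb E[\sup_{s\leqslant T\wedge\tau^M}|X_s|^2]$ uniform in $M$. In case (B.2'), the coercivity $\langle x,\tilde b_\mu(x)\rangle\leqslant -c_M|x|^q+\bar C_M$ with $q>\alpha=2$ absorbs both the linear growth of $b^\eta$ in $x$ and the polynomial growth of $|\tilde\sigma|^2$, yielding the same kind of uniform moment bound. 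Markov's inequality then gives $\mathbb P(\tau^M\leqslant T)\leqslant C/M\to 0$, hence $\tau=\infty$ almost surely, which establishes strong existence and uniqueness on $[0,T]$. The main technical step is the bookkeeping in case (B.2'), where one must check that the coercive contribution of $\tilde b$ dominates not only its own growth but also the additional $b^\eta$ term, whose dependence on $\mu$ must be self-consistently controlled by the running moment estimate through a standard stopping-time argument (first bound $m_2(\mu_{t\wedge\tau^M})\leqslant M$ along the localization, then close the estimate by Gronwall).
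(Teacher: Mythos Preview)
Your overall strategy coincides with the paper's: remove the two singularities (the matrix inverse and the set $\mathcal S$) so that only the moment obstruction survives, localize at $\tau^M=\inf\{t:\ m_2(\mu_t)\geqslant M\}$, and then run a Lyapunov/Gronwall computation on $\phi(x)=1+|x|^2$ to show these times diverge. The paper phrases the last step as the supermartingale argument of Proposition~\ref{prop:nonexplosioncriteria} with $\mathbb V(\mu)=1+m_2(\mu)$, which is exactly your It\^o/Gronwall estimate seen at the level of $\mu$.

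There is, however, a genuine gap in your treatment of case (A.1). Your crude bound $\mu[\sigma^\eta(\sigma^\eta)^T]\leqslant C(1+m_2(\mu))/\eta^2$, and hence $|b^\eta_\mu(x)|\leqslant C_\eta(1+|x|)(1+m_2(\mu))$, is too weak to close the loop. Plugging it into It\^o's formula and taking expectations, the drift term contributes $\mathbb E\bigl[(1+|X_t|^2)(1+m_2(\mu_t))\bigr]=\mathbb E\bigl[(1+m_2(\mu_t))^2\bigr]$ (since $m_2(\mu_t)$ is $\mathcal F^0$-measurable), which is \emph{quadratic}, not linear, in the running moment. Your claimed inequality $\dd\mathbb E|X_{t\wedge\tau^M}|^2\leqslant C(1+\mathbb E|X_{t\wedge\tau^M}|^2)\dd t$ therefore does not follow; and the alternative of freezing $m_2(\mu_{t\wedge\tau^M})\leqslant M$ inside the localization only yields a Gronwall constant of order $1+M$, so that Markov gives $\mathbb P(\tau^M\leqslant T)\leqslant Ce^{C(1+M)T}/M$, which does not vanish. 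The same obstruction arises in your (B.2') discussion: the coercivity of $\tilde b$ acts in $x$, not in $\mu$, and cannot absorb a factor that grows with $m_2(\mu)$.

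The paper avoids this by the sharper observation that the $\mu$-dependent matrix factors are in fact bounded \emph{uniformly} in $\mu$. Writing $P=\mu[\nabla f^T\nabla f]$ and $M_\mu=(\eta I+P)^{-1}s(\mu(f))$, the spectral inequality $\lambda/(\eta+\lambda)^2\leqslant 1/(4\eta)$ gives
\[
\mu\bigl(|\sigma^\eta_\mu|^2\bigr)=\mathrm{tr}\bigl(M_\mu^T P\,M_\mu\bigr)=\mathrm{tr}\bigl(s^T(\eta I+P)^{-1}P(\eta I+P)^{-1}s\bigr)\leqslant \frac{\|s\|_\infty^2}{4\eta}\,,
\]
independently of $\mu$. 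Since $\nabla^2 f_k$ is constant under (A.1), $|\mu(\sigma^\eta(\sigma^\eta)^T:\nabla^2 f)|$ and hence $N_\mu=(\eta I+P)^{-1}\bigl[a-\tfrac12\mu(\sigma^\eta(\sigma^\eta)^T:\nabla^2 f)\bigr]$ are bounded by a constant depending only on $\eta,\|a\|_\infty,\|s\|_\infty$ and $f$. This yields $|b^\eta_\mu(x)|\leqslant C_\eta(1+|x|)$ uniformly in $\mu$, so that $L_\mu\phi\leqslant C\phi$ with $C$ independent of $m_2(\mu)$; with this in hand your Gronwall/Markov argument (equivalently, the paper's Lyapunov supermartingale) closes with an $M$-independent constant. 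In case (A.2) your bounds are already uniform in $\mu$ and no correction is needed.
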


The proof of this proposition is based on the observation that under the assumption made on $a$ and $s$, the functions 
\begin{align*}
   \mu & \mapsto (\eta I_p + \mu[\nabla f^T \nabla f] )^{-1} s(\mu(f)) \\
   \mu & \mapsto  \po \eta I_p + \mu[\nabla f^T \nabla f]\pf ^{-1} [a(\mu(f))-\frac{1}{2}\mu(\sigma_{\mu}^\eta(\sigma_{\mu}^\eta)^T :\nabla^2 f)]
\end{align*}
are globally Lipchitz and bounded on $\mathcal{P}_2(\R^d)$. Then, because there is no singularity associated to either $\mu(\nabla f^T\nabla f)^{-1}$ or to $a$ and $s$, the localization procedure justifies that this evolution is well defined until $m_2(\mu)=\infty$. Then, one can apply a similar Lyapunov argument to the one presented in the previous section with $\mathbb{V}(\mu)=1+m_2(\mu)$ to justify that $m_2(\mu_t)< \infty$ for all $0\leq t \leq T$, and so that the solution is non-explosive.

\begin{proof}
    First, one can notice that the map $\psi:A \in \mathcal{S}_p^+ \mapsto (\eta I +A)^{-1}$ is globally Lipschitz and bounded. This follows from the fact that the spectral norm of $\psi(A)$ is bounded by $\frac{1}{\eta}$ and that its differential is equal to $$D\psi(A)(H)=-\psi(A)H \psi(A)$$
    Then, because of the assumptions made on $f$, $a$ and $s$ and because a product of globally Lipschitz and bounded function is also Lipschitz and bounded, the coefficients $b$ and $\sigma$ are globally Lipschitz and bounded in $\R^d \times \mathcal{P}_2(\R^d)$. Then, because $\tilde{b}$ and $\tilde{\sigma}$ are supposed to be locally Lipschitz doing a similar localization procedure than the one done for Proposition \ref{prop:stochasticmomentswelldefined}, we can define a solution to the associated conditional McKean Vlasov equation $X^\eta, \mu^{\eta}$ up to time $\tau^\eta= \inf \{t \in [0,T], \, m_2(\mu^\eta_t)=\infty \}$. Moreover, doing the exact same proof than for Lemma \ref{lem:evolutionmoment}, because of the at most linear growth of $\nabla f$ and because of the assumption made on $\tilde{b}$ and $\tilde{\sigma}$, we can show that for $\phi(x)=1+|x|^2$:
    $$ L_\mu \phi = (\na \phi)^T (b_\mu + \tilde b_\mu) +\frac12 \po \sigma_{\mu}\sigma_{\mu}^T + \tilde \sigma_{\mu}\tilde \sigma_{\mu}^T\pf : \na^2 \phi$$
    verify $L_\mu \phi \leq C \phi$, which imply that:
    $$G^\eta(\mu)=\mu L_\mu^\eta \phi \leq C\mathbb{V}(\mu)= C(1+m_2(\mu))\,,$$
with $C$ a constant independent of the localization procedure. Then, using the same Lyapunov argument than the one presented in the previous subsection, one can deduce that for all $t \in [0,T]$, $\mathbb{V}(\mu_t^\eta)<\infty$, and then that $\tau^\eta=\infty$ a.s, which imply that the regularized process is non explosive.
\end{proof}

In the following, we present an application of the non-explosion criteria and of the regularized dynamic in some simple examples.
 
 \section{Some simple explicit examples}\label{sec:exemples}

 The goal of this section is to present simple examples of the stochastic moment dynamic, and to show that in these specific cases the processes are non explosive. Note that the process introduced in \cite{delarue2024ergodicity} where the expectation evolves like a Brownian motion is also an example of a non explosive stochastic moment dynamic. The following examples are defined in dimension $d=1$. For these examples, we will not consider an additional Wasserstein gradient descent term $\tilde{b}=\tilde{\sigma}=0$, corresponding to the study of the SMD alone. The first example that we will present is designed to ensure a control on the second order moment, where the second one is designed to ensure a control on both the mean and the variance of the process.

 \subsection{A control on the second order moment} \label{sec:premierexemple}

 In this first example we want to control the second order moment of the process, so we define the observable $f:x \mapsto x^2$ in dimension $d=1$ (and $p=1$ because we control only one observable). As explained previously, because the second order moment of a probability distribution is necessary positive ($\mathcal{O}=\R^+$), it is natural to force it to be an $\R^*_+$ valued process to avoid a possible explosion of the system (happening when $\mu_t(f) \in \partial\mathcal{O}$. To do so, we have to choose the coefficient $a$ and $s$ that will control the evolution of $\mu_t(f)$. The simplest example we can think of is based on Bessel processes, which are positive stochastic processes that are known to never touch $0$ when their dimension parameter $\delta$ is greater than $2$ (when $\delta$ is an integer they correspond to the norm of a $\delta$-dimensional Brownian motion). We refer to \cite{lawler2018notes} for more details on these processes. For $\delta \geq 2$, Bessel processes are defined as the unique strong solution of the following SDE:

 \begin{equation} \label{eq:bessel}
     dZ_t=\frac{\delta-1}{2 Z_t}dt+dW_t^0\quad Z_0>0
 \end{equation}

 Then, we have to define the coefficients $a$ and $s$ controlling the evolution of $\mu_t(f)$ by:

 $$ \text{For \hspace{4pt}}  z \in \R_+^{*}, \quad a(z)=\frac{\delta-1}{2z}, \quad s(z)=1\,,$$
 which verify assumption \ref{hyp:coeffaets} with $\mathcal{S}=\{0\}$. The coefficients~\eqref{eq:stomomentdyn}  associated to the evolution of the stochastic moment dynamic 
then read
\begin{equation}
    b_{\mu}(x)=\frac{(\delta-\frac{3}{2})}{4m_2(\mu)^2}x \,,\quad \quad \sigma_{\mu}(x)=\frac{x}{2m_2(\mu)}\,.
\end{equation}

Suppose that $\delta>2$, and set $q>0$ such that $2+q<\delta$.
We can apply Corollary \ref{cor:nonexplosion} with observable $g(x)=f(x)=|x|^2$ and function $V(z)=z+ \frac{1}{z^q}$ because for all $z>0$:

\[a(z) V'(z)+\frac{1}{2} s(z)^2 V''(z)=\frac{\delta-1}{2m_2(\mu)}+\frac{q(q+2-\delta)}{2m_2(\mu)^{2+q}}\,.\]
Then, because $ 2+q < \delta$, we can find a constant $C \geq0 $ such that for all $z>0$:
\[ \frac{\delta-1}{2z}+\frac{q(q+2-\delta)}{2z^{2+q}} \leq C\,.\]
Hence, since $V(z)\geq1$, Corollary \ref{cor:nonexplosion} ensures that the resulting process is non explosive.

\subsection{A control on the mean and the variance} \label{sec:secondexemple}

For now, we saw that it was possible to build two simple processes to control either the mean or the second order moment of the process. It is natural to try to combine these two in order to have a control on both the mean and the variance of the process (and so $p=2$). As explained in the introduction, controlling the mean as a Brownian motion does not induce a singularity in the evolution, while the non-negativity of the variance imposes the use of a non negative process to avoid an explosion in zero. Thus, it seems reasonable to control the mean as a Brownian motion and the variance as a Bessel process. Because there is no constraint relating the variance and the mean of a probability distribution, it is natural to require both observables to evolve independently. Then, given a $2$-dimensional Brownian motion $W^0=(W^{0,1},W^{0,2})$, if we define $f_1:x \mapsto x$,  $f_2:x \mapsto x^2$, because $\text{Var}(\mu)=\mu(f_2)-\mu(f_1)^2$ we require that:

 \begin{equation*} 
     \left\{
        \begin{array}{rcl}
        \dd\mu_t(f_1) &=& \dd W_t^{0,1}\\
       \displaystyle \dd(\mu_t(f_2)-\mu_t(f_1)^2 ) &=& \displaystyle \frac{\delta-1}{2(\mu_t(f_2)-\mu_t(f_1)^2 )} \dd t+ \dd W_t^{0,2}
        \end{array}\right.
\end{equation*}
which can equivalently be written as
 \begin{equation*} 
     \left\{
        \begin{array}{rcl}
        \dd\mu_t(f_1) &=& \dd W_t^{0,1}\\
       \displaystyle \dd \mu_t(f_2) &=&\displaystyle  2 \mu(f_1) \dd W_t^{0,1} +dt+\frac{\delta-1}{2(\mu_t(f_2)-\mu_t(f_1)^2 )} \dd t+ \dd W_t^{0,2}
        \end{array}\right.
\end{equation*}

Then, if we define the observable $f=(f_1,f_2)$, the coefficients $a$ and $s$ directing the evolution of $\mu(f)$ must be:
 \begin{equation}
     a(z_1,z_2)= \begin{pmatrix} 
0 \\ \displaystyle
1+\frac{\delta-1}{2(z_2-z_1^2)} \\
\end{pmatrix}, \quad s(z_1,z_2)= \begin{pmatrix}
1 & 0 \\
2z_1 & 1 
\end{pmatrix}\,.
 \end{equation}
These coefficients verify assumption \ref{hyp:coeffaets}, with $\mathcal{S}=\{(z_1,z_2) \in \R^2, \ z_2-z_1^2=0\}$, and $\mathcal{O}=\{(z_1,z_2) \in \R^2, \ z_2-z_1^2\geq0\}$. Using the expression~\eqref{eq:stomomentdyn} of the coefficients $b$ and $\sigma$, defining $m(\mu)=\mu(f_1)$ the mean of $\mu$, one can check that their expression can be simplified as:

 \begin{equation}
    b_{\mu}(x)=\frac{\delta-\frac{3}{2}}{4\text{Var}(\mu)^2}(x-m(\mu)) \quad \quad \sigma_{\mu}(x)=\Big(1 \hspace{3pt}, \hspace{3pt}\frac{\delta-\frac{3}{2}}{2\text{Var}(\mu)} (x-m(\mu)) \Big)
\end{equation}

Suppose that $\delta>2$, and set $q>0$ such that $2+q<\delta$. We can apply almost the same argument that for the previous example. Let $h(z_1,z_2):=z_2-z_1^2$, and $V(z_1,z_2)=1+ z_2+ \frac{1}{h(z_1,z_2)^q}$. Since
\begin{align*}
\nabla V(z_1, z_2) &= \begin{pmatrix}
2q z_1 h^{-q - 1} \\
1 - q h^{-q - 1}
\end{pmatrix}    \\
\nabla^2 V(z_1, z_2) 
&=
\begin{pmatrix}
2q h^{-q - 1} + 4q(q+1) z_1^2 h^{-q - 2} & -2q(q+1) z_1 h^{-q - 2} \\
-2q(q+1) z_1 h^{-q - 2} & q(q+1) h^{-q - 2}
\end{pmatrix}\,,
\end{align*}
we get
\[
a^T \nabla V + \frac{1}{2} s s^T : \nabla^2 V = 1 + \frac{\delta-1}{2h}+\frac{q}{2} \cdot \frac{2+q-\delta}{h^{q+2}}
\]

Then, again, because $ 2+q < \delta$, we can find a constant $C \geq0 $ such that the right term of the equation is bounded by $C$ for all $(z_1,z_2) \in \mathcal{O}\setminus \mathcal{S}$. Finally, because $V(z_1,z_2)\geq 1$, we have the desired inequality, and thus $\tau=\infty $ a.s.

\medskip{}

In the two previous examples, the main reason why we were able to easily prove that the evolutions were non explosive was that the blowing time was depending on $\mu_t$ only through the controlled observable $\mu_t(f)$. Hence, choosing an appropriate control on this process was enough to ensure stability. However, this is not true on the general case even for simple observables, in which case one should use the regularized version of SMD to avoid the explosion.

 \section{Numerical illustrations}\label{sec:numerique}

 The goal of this section is to illustrate the behavior of the Stochastic Moment Dynamic for several different observables and control on their evolution. As explained in the previous sections, some of them will have an explosive behavior if the constraints verified by $\mu(f)$ are not verified by the diffusion directing its evolution. We will show that numerically the regularized processes are non-explosive and that even if they do not verify the prescribed SDE, they still bring elliptic noise on the moments of the particle system (which was our original motivation). This section is organized as follows. First, we illustrate the possible explosive behavior of the SMD with the example introduced in Section \ref{sec:secondexemple}: depending on the value of the parameter of the associated Bessel process, the singularity may (or not) be reached, so the process may (or not) explode. Second, we show for two other observables and control that numerically, the regularized processes introduced in Section~\ref{ref:regularise} are stable and still add elliptic noise on the moment's evolution. Finally, we show that when suitable observables are controlled, the addition of a SMD term in a Wasserstein gradient descent allows for transitions between the bassins of attraction of the different minimizers. This will be illustrated for the granular media equation~\eqref{eq:granularmedia} with the double potential and the attractive quadratic interaction kernel.

 All simulations are based on an Euler-Maruyama scheme applied to the particle system~\eqref{eq:commonnoisePS}.

 \subsection{Explosive behavior of the SMD}

In this section, we illustrate through the example of Section \ref{sec:secondexemple} that depending on the observable and the control chosen, the stochastic moment dynamic process may (or not) be explosive.

We simulate the system of $N=1000$ particles, initialized as independent variables with Gaussian distribution $\mathcal{N}(0,1)$, with coefficients:
 \begin{equation}
    b_{\mu}(x)=\frac{\delta-\frac{3}{2}}{4\text{Var}(\mu)^2}(x-m(\mu)) \quad \quad \sigma_{\mu}(x)=\Big(1 \hspace{3pt}, \hspace{3pt}\frac{\delta-\frac{3}{2}}{2\text{Var}(\mu)} (x-m(\mu)) \Big) \quad  \quad \tilde{b}=\tilde{\sigma}=0
\end{equation}
corresponding to the SMD process in dimension 1 with observable $f(x)=(x,x^2)$, where the mean evolves like a Brownian motion and the variance as a $\delta$ Bessel process (independent). The result, displayed in Figure~\ref{fig:1}, show a very different behavior of the process depending on the value of $\delta$.

\begin{figure}[!ht] \centering
  \includegraphics[width=180mm, scale=1]{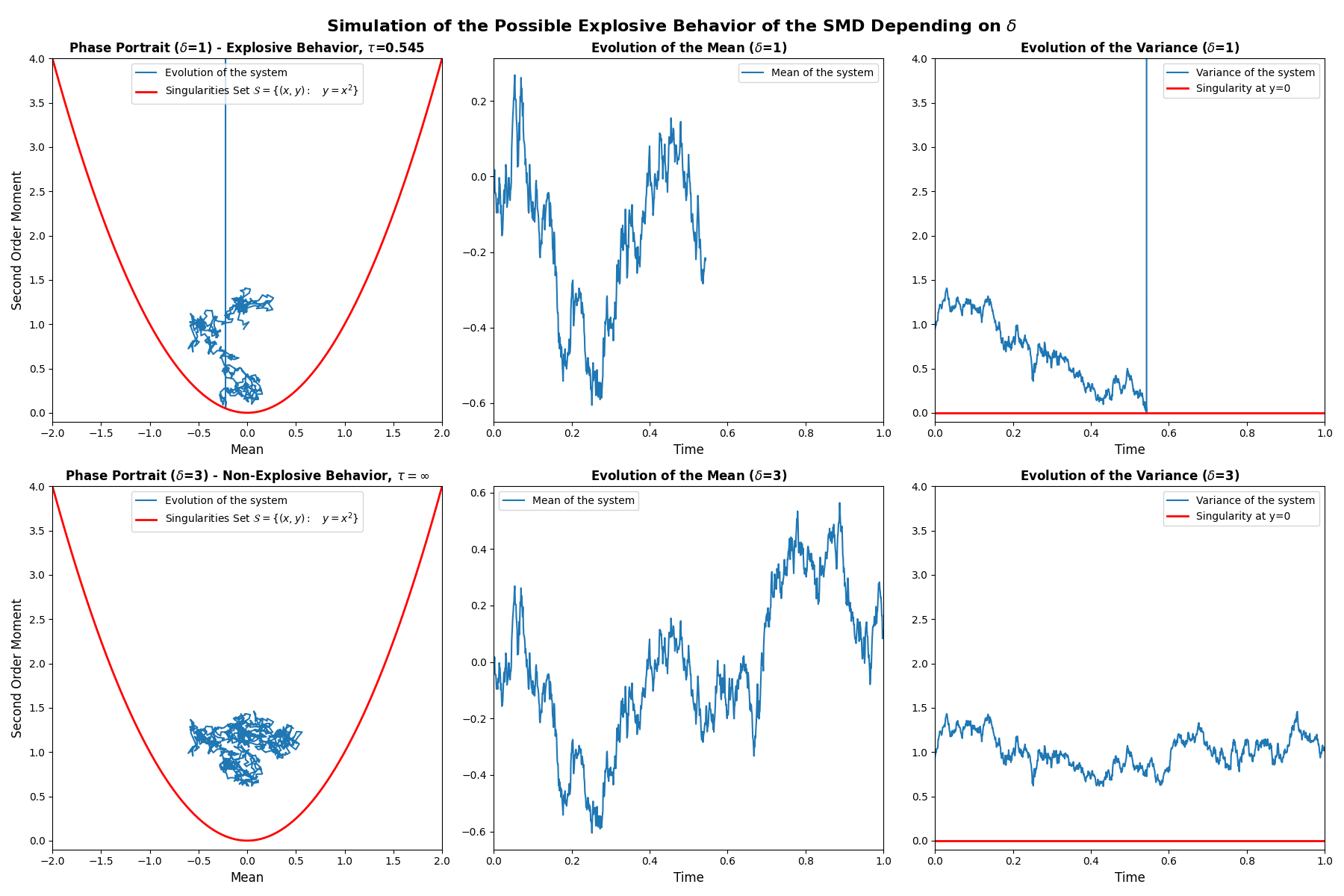}
  \caption{Explosive/Non explosive behavior of the SMD depending on the parameter $\delta$ (top: $\delta=1$; bottom: $\delta=3$).}\label{fig:1}
\end{figure}

The process $\mu(f)$ is constrained (because of Jensen's inequality) to live in $\mathcal{O}=\{(x,y)\in \R^2, \, y\geq x^2 \}$ (in red in the left pictures of Figure~\ref{fig:1}), and the evolution is no longer defined when it reaches the boundary of the domain (the process then explodes). By construction, the evolution is designed so that the explosion can (resp. cannot) happen if $\delta$ is smaller (resp. greater) than $2$. The simulation illustrates this phenomenon: two SMD processes initialized with the same initial conditions and driven by the same Brownian motion can either explode or not depending on the value of $\delta$.



\subsection{Regularized version of the SMD}

In this section, we present a numerical illustration of the non-explosive behavior of the regularized SMD from Section~\ref{ref:regularise}, even if the control on the moment is not well designed (it may reach the singularity of the non-regularized SMD). We will present two different cases of study, each corresponding to a different observable $f$ in dimension $1$, and is both cases directed as a Brownian motion ($a(z)=0$ and $s(z)=1$). In both case, the particles will be initialized as independent variables with Gaussian distribution $\mathcal{N}(0,1)$, and we will not consider an additional McKean Vlasov term, so that $\tilde{b}=\tilde{\sigma}=0$. 
The following simulations will be associated to the two different cases $f(x)=x^2$ (in Figure~\ref{fig:2}) and $f(x)=\tanh(x)$ (in Figure~\ref{fig:3}), implying that $\mu_t(f)$ is constrained to live respectively on $\R^+$ and $(-1,1)$. Defining $\eta\geq 0$ the regularization parameter, for $f(x)=x^2$, the coefficients $b^{\eta}$ and $\sigma^{\eta}$ are respectively defined by:
\begin{equation}
    b_\mu^\eta(x)=-\frac{8x m_{2}(\mu)}{(\eta+4m_2(\mu))^3} \quad \sigma_\mu^{\eta}=\frac{2x}{\eta+4m_2(\mu)}\,.
\end{equation}

\begin{figure}[!ht] \centering
  \includegraphics[width=180mm, scale=1]{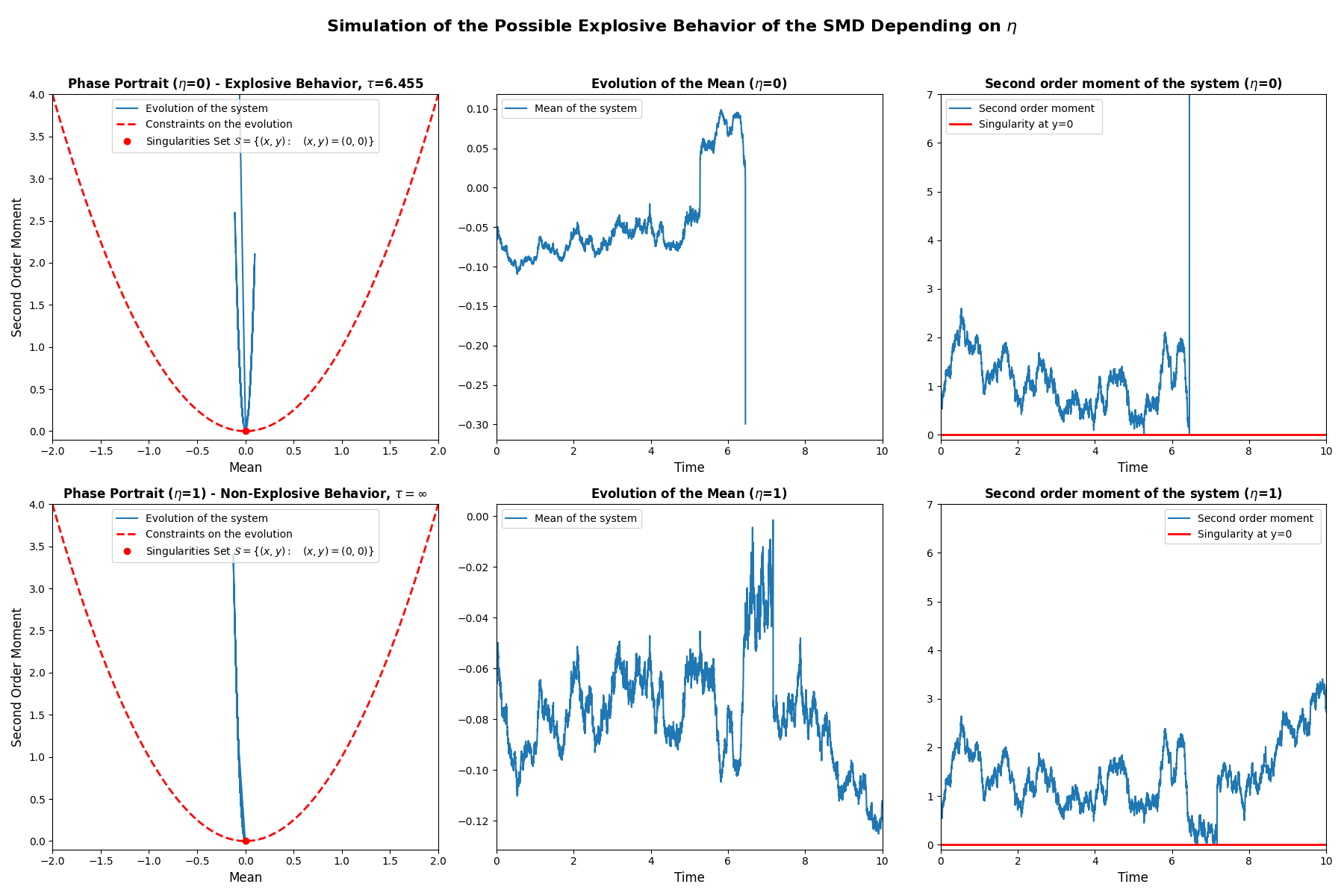}
  \caption{Explosive/Non explosive behavior of the regularized SMD with $f(x)=x^2$ depending on the parameter $\eta$ (top: $\eta=0$; bottom: $\eta=1$)}\label{fig:2}
\end{figure}
For $f(x)=\tanh(x)$, the coefficients $b^{\eta}$ and $\sigma^{\eta}$ are defined by:
\begin{equation}
 b^\eta_\mu(x)=\frac{1}{\cosh(x)^2} \cdot \frac{ \mu(\frac{\tanh}{\cosh^6} )}{(\eta+\mu(\frac{1}{\cosh^4}))^3} \quad \sigma^\eta_\mu(x)=\frac{1}{\cosh(x)^2} \cdot \frac{ 1}{\eta+\mu(\frac{1}{\cosh^4})}\,.
\end{equation}

\begin{figure}[!ht] \centering
  \includegraphics[width=180mm, scale=1]{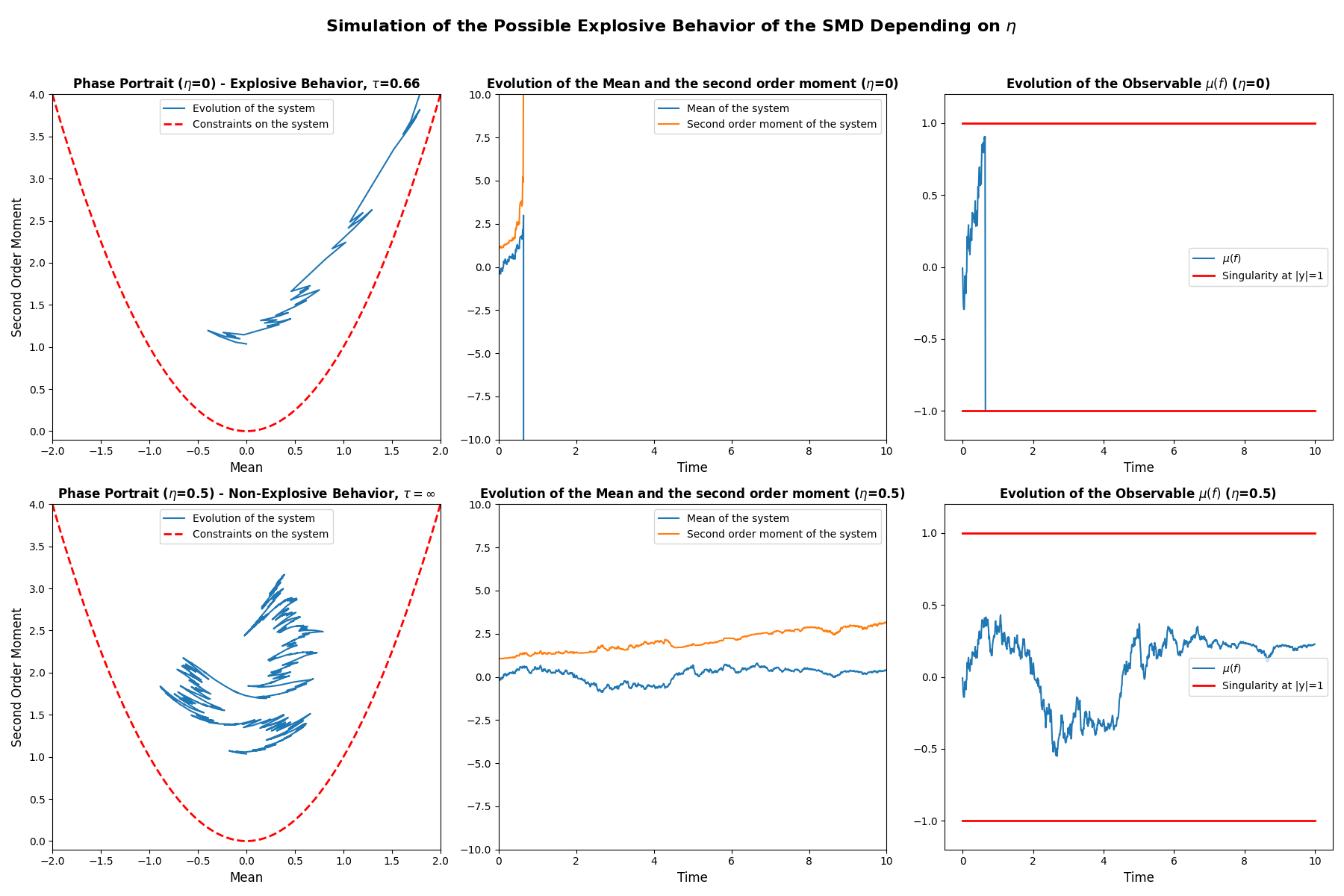}
  \caption{Explosive/Non explosive behavior of the regularized SMD with $f(x)=\tanh(x)$ depending on the parameter $\eta$  (top: $\eta=0$; bottom: $\eta=0.5$)}\label{fig:3}
\end{figure}

 The constraints on the evolution depends on the observable: the first is constrained to be positive while the second must live in $(-1,1)$. Because $\mu_t(f)$ is designed to evolve as a brownian motion when $\eta=0$ (which may reach the singularities of both processes), both non-regularized dynamics are explosive. Nevertheless, the regularized evolutions are non-explosive (but $\mu(f)$ is no longer a brownian motion). These simulations show that from a practical point of view it is interesting to numerically implement the regularized versions of the SMD if one wants to perturb the moments of a particle system and still get a stable dynamic.

Looking at the phase portrait of both processes (left pictures in Figures~\ref{fig:2} and \ref{fig:3}, representing the trajectory of the mean and second order moment of $\mu_t$, independently from the observable $f$), we confirm the intuition that the choice of the perturbed observable strongly influences the region of the probability space explored by the SMD. Thus, one should adapt the choice of $f$ depending on the problem considered to get the most of the exploration properties of the SMD. The following subsection illustrates the interest of the addition of a SMD term in the McKean Vlasov dynamic \eqref{eq:mcKeanPS}.

\subsection{SMD is a suitable exploration noise for the double well McKean Vlasov dynamic}
 
Finally, we can illustrate the interest of the addition of an SMD term in the McKean Vlasov dynamic as a suitable exploration term for the minimization of the free energy \ref{eq:energy}. Indeed, we are going to show that numerically the ``noisy version" of the Wasserstein gradient flow can make transitions between the basins of attraction of the two stable global minimizers $\rho_{-}$ and $\rho_{+}$. In order to chose what observable $f$ should be controlled, it is important to first understand the qualitative behavior of the McKean Vlasov dynamic. When the thermal agitation is small enough, the diffusion phenomenon is not enough to compensate the deterministic drift, and so the particles tend to aggregate in one of the two potential well. The limiting distributions ($\rho_{-}$ and $\rho_{+}$) are then concentrated around one of the minima of the potential with a small variance. In order to allow transitions between $\rho_{-}$ and $\rho_{+}$, one should encourage the particles not to stay grouped at the same place. To do so, we can control the mean of the particle system so that it does not stay close to the same minima of the potential, and the variance of the system to avoid that the particles stay aggregated together. That is why we chose to control the observable $f(x)=(x,x^2)$ exactly like in Section \ref{sec:secondexemple}, but one should keep in mind that for a different problem another observable may be more adapted (besides, in this specific example, randomizing the mean alone would be sufficient to get an ergodic process, as shown in \cite{delarue2024ergodicity}).

We simulate the evolution of the particle system associated to the following coefficients:
\begin{equation}
\left\{
        \begin{array}{lll}
        b_{\mu}(x)= \gamma \cdot \frac{\delta-\frac{3}{2}}{4\text{Var}(\mu)^2}(x-m(\mu)) \quad \quad \sigma_{\mu}(x)= \gamma \cdot  \Big(1 \hspace{3pt}, \hspace{3pt}\frac{\delta-\frac{3}{2}}{2\text{Var}(\mu)} (x-m(\mu)) \Big)\\
        \\
    
        \tilde{b}_\mu(x)=-\nabla U(x) -\mu \ast \nabla W(x) \hspace{1cm} \tilde{\sigma}_\mu(x)=\tilde{\sigma} \,,
        \end{array}\right.
\end{equation}
where $\gamma \geq0$ is a parameter corresponding to the intensity of the exploration noise and $\delta\geq 2$ the dimension of the underlying Bessel process (see Section \ref{sec:secondexemple}). The system is made of $N=1000$ particles with a diffusive parameter  $\tilde{\sigma}=0.7$, and the particles are initialized with independent variables with Gaussian distribution $\mathcal{N}(-\frac{3}{2},\frac{1}{2}$), so that the non perturbed process naturally converges to $\rho_-$. We noticed that the evolutions had a non-explosive behavior so we did not add a regularization parameter $(\eta=0)$. The results are displayed in Figure~\ref{fig:4}.

\begin{figure}[!ht] \centering
  \includegraphics[width=180mm, scale=1]{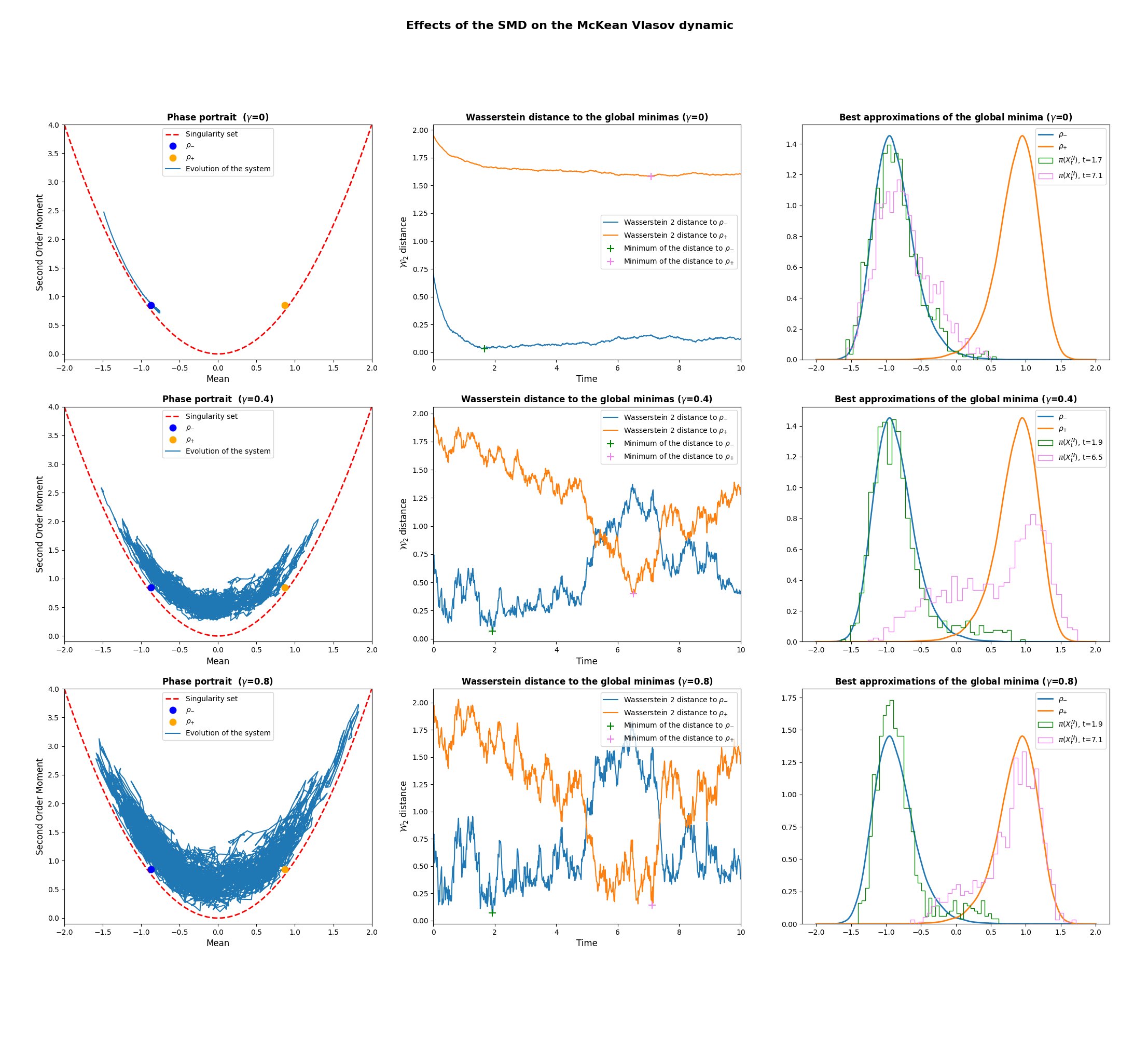}
  \caption{Evolution of the "noisy" McKean Vlasov dynamic depending on the intensity of the noise $\gamma$}\label{fig:4}
\end{figure}

In the figure, each line corresponds to a different intensity of the exploration noise $\gamma$ ($0$, $0.4$ and $0.8$ from top to bottom). The first column corresponds to the evolution of the particle system over time in the phase portrait mean/ second order moment, with the position of the two global minimizers of the free energy represented with a colored dot. The dashed red line corresponds to the intrinsic constraints given by Jensen's inequality, and the process must stay above the line to avoid explosion. The second column represents the Wasserstein 2 distance of the particle system to the two global minimizers over time (which have been estimated in advance through the long term convergence of the classical McKean Vlasov particle system). The two cross-points (green and pink) represent the moment where the distance was minimal, and the distribution of the particle system at these instants is represented next to the two global minima in the last column of the figure.

When the intensity of the exploration noise $\gamma=0$, the evolution corresponds to the classical McKean Vlasov dynamic: the system evolves almost deterministically (up to a small random perturbation resulting from the approximation of the deterministic dynamic by a particle system) to one of the global minimizers. The process is not  exploring the space at all and never approaches the other global minimizer during the simulation time. 

When $\gamma$ increases, the process explores the space of probability measures and makes transitions between the basins of attraction of the two global minimizers. As a consequence, the process passes closer to the second minimizer $\rho_{+}$, resulting in a better approximation of $\rho_{+}$ along the trajectory (column 3). However, this exploration noise induces a bias in the deterministic dynamic which is getting bigger as $\gamma$ grows: if one wishes to determine the minima of a more complicated energy, the research of the minima should be done in two step. First, one should use the exploration properties of the SMD by simulating a noisy version of gradient flow and store the minimizers of the energy along the trajectory. Then, in order to refine their expression, one should start a deterministic gradient descent initialized from these candidates, in order to avoid the additional bias coming from the SMD.

This numerical experiment illustrates the exploration properties of the stochastic moment dynamic, and shows on a toy example how it can be used to solve a non-convex optimization problem over the space of probability measures.

\section*{Acknowledgments}

The research of P. Monmarché is supported by the project  CONVIVIALITY (ANR-23-CE40-0003) of the French National Research Agency  and by the European Research Council (ERC) under the European Union’s
Horizon 2020 research and innovation program, project EMC2 (grant agreement N°810367).

 \section{Appendix}\label{sec:appendix}

\subsection{Proof of Proposition \ref{prop:stochasticmomentswelldefined}}

\subsubsection{Proof of Lemma \ref{lem:regucoefsursomaines} }
\label{app:proof1stlemma}

Before starting the proof of the first lemma, we recall basic results on functions of $\mu$ of the form $\mu \rightarrow \mu(g)$ with $g$ a function with at most a polynomial growth. To do so, in order to clarify the proof, we give a name to the set of functions with at most polynomial growth:

\begin{defi}\label{def:ensemblel}
    Let $\theta \geq 0$, $q,r\in \N^*$. We define  $L^{(\theta)}(\R^d,M_{q,r}) $ the set of functions $f:\R^d \rightarrow M_{q,r}$ such that it exists a constant $C\geq0$ such that $$\forall x,y \in \R^d: \quad |f(x)-f(y) |\leq C|x-y|(1+|x|^{\theta}+|y|^{\theta})$$
\end{defi}

The choice of this notation is justified by the basic observation that the parameter $\theta$ can be seen as the growth rate of the Lipschitz constant of the -function. Then in dimension $d=1$, the monomials $f:x \mapsto x^{\theta+1}$ belong to the set $L^{(\theta)}(\R,\R)$ (because of the mean value theorem). Moreover, because of the equivalence of the norms for finite dimensional vector spaces, the choice of the norm has no influence on the definition of the set. This notation will be useful to recall some basic properties of these sets of functions.

\begin{lem}\label{lem:croissancepolynomiale}
    Let $a$ and $b$ be positive, and let $m,n,l$ be three positive integers. Suppose that $g\in L^{(\theta_1)}(\R^d,M_{m,n})$ and $h\in L^{(\theta_2)}(\R^d,M_{m,n})$. The following properties hold true.
    \begin{enumerate} \label{lem:propensemble}
        \item (Product) $ gh \in L^{(\theta_1+\theta_2+1)}(\R^d,M_{m,l})$ 
        \item (Product by a constant matrix) For $A \in M_{m,n}(\R)$ a constant matrix,  $A h \in L^{(\theta_2)}(\R^d,M_{m,l})$, and the associated constant grows at most linearly in the norm of $A$.
        \item (Evaluation on a coordinate) For all $i,j \in \{1,..m\}\times \{1,..,n\}$ , $g_{i,j} \in L^{(\theta_1)}(\R^d,\R)$ 
        \item (Stability) Suppose that $\theta_1\leq \theta_2$ , then  $g\in L^{(\theta_2)}(\R^d,M_{m,n})$ 
        \item(Transposition) $g^T \in L^{(\theta_1)}(\R^d,M_{n,m})$.
    \end{enumerate}
\end{lem}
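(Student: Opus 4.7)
The plan is to verify each property by direct calculation from Definition~\ref{def:ensemblel}. The only point requiring genuine attention is item 1, where the shift of the exponent by $+1$ appears and must be explained; the other items are formal consequences of the definition.

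First I would record a preliminary growth estimate that will then be used repeatedly: any $g \in L^{(\theta)}(\mathbb{R}^d, M_{q,r})$ satisfies
$$|g(x)| \leq |g(0)| + |g(x) - g(0)| \leq |g(0)| + C|x|(1 + |x|^\theta) \leq C'(1 + |x|^{\theta+1}).$$
Thus a function in $L^{(\theta)}$ automatically has polynomial growth of order $\theta+1$, and this is precisely what produces the extra $+1$ in the product rule.

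For item 1 I would use the telescoping identity
$$g(x)h(x) - g(y)h(y) = \bigl(g(x) - g(y)\bigr) h(x) + g(y)\bigl(h(x) - h(y)\bigr),$$
and bound each summand by combining the Lipschitz property of one factor with the polynomial growth bound of order $\theta_i + 1$ on the other, obtained via the preliminary observation. Expanding the resulting products and applying the elementary inequality $|x|^a|y|^b \leq |x|^{a+b} + |y|^{a+b}$ (valid for $a,b \geq 0$), every mixed term fits into a bound of the form $C|x-y|(1 + |x|^{\theta_1+\theta_2+1} + |y|^{\theta_1+\theta_2+1})$, which is the claimed estimate.

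The remaining items are routine. Item 2 is essentially the special case of item 1 where one factor is the constant matrix $A$: the direct bound $|Ah(x) - Ah(y)| \leq |A|\cdot|h(x) - h(y)|$ yields a Lipschitz constant linear in $|A|$ without introducing any extra factor of $|x|$, so the exponent $\theta_2$ is preserved. Item 3 follows because the coordinate projection $g \mapsto g_{i,j}$ is $1$-Lipschitz in any matrix norm (up to a constant depending on the chosen norm, which is absorbed into $C$ thanks to equivalence of norms on finite-dimensional spaces, as noted after Definition~\ref{def:ensemblel}). Item 4 follows from the elementary pointwise inequality $1 + |x|^{\theta_1} + |y|^{\theta_1} \leq 3(1 + |x|^{\theta_2} + |y|^{\theta_2})$ valid when $\theta_1 \leq \theta_2$, which is verified by splitting according to whether $\max(|x|,|y|) \leq 1$ or not. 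Finally, item 5 is immediate since $|g^T(x) - g^T(y)| = |g(x) - g(y)|$ in Frobenius norm, and again equivalence of norms handles any other choice.

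The sole ``obstacle'' is the careful bookkeeping of exponents in item 1; no conceptual difficulty arises, and the whole lemma amounts to a collection of elementary estimates recorded once and for all so that the subsequent composition arguments in the proof of Lemma~\ref{lem:regucoefsursomaines} can be carried out without further comment.
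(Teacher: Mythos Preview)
Your proof is correct and follows essentially the same approach as the paper. The paper's own justification is even terser: it merely observes that item~1 reduces to the inequality $|x|^{\theta_1}|y|^{\theta_2+1} \leq |x|^{\theta_1+\theta_2+1} + |y|^{\theta_1+\theta_2+1}$ (attributed there to convexity of the logarithm, i.e.\ Young's inequality) and declares the remaining items straightforward; your write-up simply makes the telescoping step and the preliminary growth bound $|g(x)|\leq C'(1+|x|^{\theta+1})$ explicit.
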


The first result follows from the convexity of the logarithm, ensuring that expressions of type $|x|^{\theta_1}|y|^{{\theta_2}+1} $ are smaller than $|x|^{\theta_1+\theta_2+1}+|y|^{\theta_1+\theta_2+1}$. The proof of the other results is straightforward. We will use these results later in the proof, to justify that some observables will have a polynomial growth. We will also use the notion of moment of order $\gamma \geq 1$ of law defined by:
$$\displaystyle m_{\gamma}:\mathcal{P}_{\gamma}(\R^d) \rightarrow \R^{+} , \quad m_{\gamma}: \mu \mapsto \int_{\R^d} |x|^{\gamma} d\mu(x)=\mathcal{W_\gamma}(\mu,\delta_0)^\gamma$$

The next lemma is the reason justifying the choice of the hypothesis on the observables.

\begin{lem} \label{lem:evaluationobservable}
    Let $\theta\geq 0$, $m,n$ be two positive integers, and $g\in L^{(\theta)}(\R^d,M_{m,n})$. Then the map defined on $\mathcal{P}_{\theta+1}(\R^d)$ by $\mu \mapsto \mu(g)$ is Lipschitz and bounded on the bounded subsets of $\mathcal{P}_{\theta+1}(\R^d)$. In other words, for all $M\geq 0$ the map $\mu \mapsto \mu(g)$ is Lipschitz bounded on the set $\{\mu \in \mathcal{P}_{\theta+1}(\R^d), \quad m_{\theta+1}(\mu) \leq M \}$
\end{lem}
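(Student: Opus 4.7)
The plan is to reduce everything to the pointwise estimate
\[|g(x)-g(y)| \leqslant C |x-y|\bigl(1+|x|^{\theta}+|y|^{\theta}\bigr)\]
given by the definition of $L^{(\theta)}(\R^d,M_{m,n})$, and then use an optimal coupling together with a Hölder inequality.

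First I would check that $\mu\mapsto \mu(g)$ is well-defined and bounded on bounded subsets of $\mathcal P_{\theta+1}(\R^d)$. Plugging $y=0$ in the displayed inequality yields
\[|g(x)| \leqslant |g(0)| + C|x|\bigl(1+|x|^{\theta}\bigr) \leqslant C_0 \bigl(1+|x|^{\theta+1}\bigr)\]
for some constant $C_0$ depending only on $C$ and $|g(0)|$. Integrating against any $\mu\in \mathcal P_{\theta+1}(\R^d)$ gives $|\mu(g)|\leqslant C_0(1+m_{\theta+1}(\mu))$, which is uniformly bounded when $m_{\theta+1}(\mu)\leqslant M$.

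For the Lipschitz bound, fix $M\geqslant 0$ and $\mu,\nu\in\mathcal P_{\theta+1}(\R^d)$ with $m_{\theta+1}(\mu),m_{\theta+1}(\nu)\leqslant M$, and let $\pi\in\Gamma_{\mathrm{opt}}(\mu,\nu)$ realize $\mathcal W_{\theta+1}(\mu,\nu)$. Using the pointwise estimate inside the integral, I get
\[|\mu(g)-\nu(g)| \leqslant \int_{\R^d\times \R^d} |g(x)-g(y)|\,\dd \pi(x,y) \leqslant C\int_{\R^d\times \R^d} |x-y|\bigl(1+|x|^{\theta}+|y|^{\theta}\bigr)\,\dd \pi(x,y)\,.\]
When $\theta>0$, I apply Hölder's inequality with conjugate exponents $p=\theta+1$ and $q=(\theta+1)/\theta$: the first factor is exactly $\mathcal W_{\theta+1}(\mu,\nu)$, while for the second factor the elementary bound $(1+a+b)^{(\theta+1)/\theta}\leqslant C_\theta(1+a^{(\theta+1)/\theta}+b^{(\theta+1)/\theta})$ applied to $a=|x|^\theta$, $b=|y|^\theta$ gives
\[\int (1+|x|^\theta+|y|^\theta)^{(\theta+1)/\theta}\,\dd \pi \leqslant C_\theta\bigl(1+m_{\theta+1}(\mu)+m_{\theta+1}(\nu)\bigr) \leqslant C_\theta(1+2M)\,.\]
Combining these gives $|\mu(g)-\nu(g)|\leqslant C_M \mathcal W_{\theta+1}(\mu,\nu)$, which is the desired Lipschitz estimate on the bounded set under consideration.

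The case $\theta=0$ is even easier: the pointwise inequality reduces to $|g(x)-g(y)|\leqslant 2C|x-y|$, so $g$ is globally Lipschitz and one directly obtains $|\mu(g)-\nu(g)|\leqslant 2C\,\mathcal W_1(\mu,\nu)\leqslant 2C\,\mathcal W_{\theta+1}(\mu,\nu)$ without invoking Hölder. Since none of the steps above presents any real difficulty, there is no true obstacle; the only mildly delicate point is lining up the Hölder exponents with the order $\theta+1$ of the Wasserstein distance that appears in the statement, which is precisely why the growth exponent was set to $\theta+1$ in the definition of $L^{(\theta)}$.
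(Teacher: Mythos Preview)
Your proof is correct and follows essentially the same route as the paper: optimal coupling plus H\"older with exponents $\theta+1$ and $(\theta+1)/\theta$, followed by the convexity bound $(1+|x|^\theta+|y|^\theta)^{(\theta+1)/\theta}\leqslant C_\theta(1+|x|^{\theta+1}+|y|^{\theta+1})$. If anything, you are slightly more thorough than the paper, which does not separately verify boundedness and tacitly writes the H\"older exponent $(\theta+1)/\theta$ without isolating the degenerate case $\theta=0$; one cosmetic slip is that for $\theta=0$ the factor $1+|x|^0+|y|^0$ equals $3$, not $2$, so the constant should be $3C$ rather than $2C$.
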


\begin{proof}
    The proof of this result is an application of the Holder inequality. Let $\theta\geq 0$, $m,n$ be two positive integers, $M\geq 0$, and $g\in L^{(\theta)}(\R^d,M_{m,n}(\R^d))$. Let $\mu,\nu \in \mathcal{P}_{\theta+1}(\R^d)$, such that $m_{\theta+1}(\mu)$ and $m_{\theta+1}(\nu)$ are bounded by $M$. Let $\pi \in \mathcal{C}(\mu,\nu)$ be an optimal coupling between $\mu$ and $\nu$ for the $\mathcal W_{\theta+1}$ distance:
    $$\mathcal W_{\theta+1}(\mu,\nu)^{\theta+1}=\int_{\R^d\times \R^d}|x-y|^{\theta+1}d\pi(x,y)$$ 
    The following inequalities hold true:
    \begin{equation*}
            \begin{split}
                |\mu(g)-\nu(g)|&=| \int_{\R^d}g(x)d\mu(x)-\int_{\R^d}g(y)d\nu(y)|\\
                &=|\int_{\R^d\times \R^d} g(x)-g(y)d\pi(x,y) |\\
                &\leq \int_{\R^d\times \R^d}C |x-y|(1+|x|^\theta+|y|^\theta) d\pi(x,y) \\
                & \leq C\Bigl[\int_{\R^d\times \R^d}|x-y|^{\theta+1} d\pi(x,y)\Bigr]^{\frac{1}{\theta+1}} \Bigl[\int_{\R^d\times \R^d}(1+|x|^\theta+|y|^\theta)^{\frac{\theta+1}{\theta}} d\pi(x,y)\Bigr]^{\frac{\theta}{\theta+1}} \\
            \end{split}
        \end{equation*}
    Because of the Holder inequality applied with $\theta+1$ and $\frac{\theta+1}{\theta}$. Moreover, by convexity, we can find a constant $C_\theta$ such that for all $x$ and $y$:
    $$(1+|x|^\theta+|y|^\theta)^{\frac{\theta+1}{\theta}}\leq C_\theta(1+|x|^{\theta+1}+|y|^{\theta+1}) $$
    Finally, we have :

    \begin{equation*}
       \begin{split}
            |\mu(g)-\nu(g)| &\leq C C_\theta \mathcal W_{\theta+1}(\mu,\nu) (1+m_{\theta+1}(\mu)+m_{\theta+1}(\nu))^{\frac{\theta}{\theta+1}} \\
            & \leq C C_\theta( 1+2M )^{\frac{\theta}{\theta+1}}\mathcal W_{\theta+1}(\mu,\nu)\,,
       \end{split}
    \end{equation*}
    Which proves the result.
    
\end{proof}

Before starting the main step of the proof, let us recall that because the Wasserstein distance of order $\theta$ between two measure is non-decreasing in $\theta$, then if a map defined on $\mathcal{P}_{\theta}(\R^d)$ is Lipschitz bounded on bounded subsets of $\mathcal{P}_{\theta}(\R^d)$, then it is still the case in every Wasserstein space of order $\theta'$ with $\theta'\geq \theta$. Then, the previous result, even if stated in the Wasserstein space $\mathcal{P}_{\theta+1}(\R^d)$, is also true in every Wasserstein space of order bigger than $\theta+1$. We can now prove Lemma \ref{lem:regucoefsursomaines}.

\begin{proof}
    Let $K>0$ and $M>0$. We first show the result for $\sigma$ then we show it for $b$.
    \medskip{}

    \underline{\textit{1) Study of $\sigma$}}: $\sigma$ is written as the product of three function of $x$ and $\mu$, respectively $x, \mu \mapsto \nabla f(x)$,  $x,\mu \mapsto \mu(\nabla f^T \nabla f)^{-1}$ and $x,\mu \mapsto s(\mu(f))$. We show that they are all Lipschitz and bounded on $\mathcal{D}^{K,M}$, and we conclude by the fact that a product of bounded Lipschitz functions is Lipschitz bounded.

    \medskip{}
    
    \textit{a) Study of $\nabla f$:} Because for all $x,\mu$ in $\mathcal{D}^{K,M}$, the norm of $x$ is bounded, and because $\nabla f$ is Lipschitz with at most a polynomial growth,  $x,\mu \mapsto \nabla f(x)$ is Lipchitz bounded on $\mathcal{D}^{K,M}$.

    \medskip{}
    
    \textit{b) Study of $\mu(\nabla f^T \nabla f)^{-1}$:} If $f$ verify $(A.1)$ or $(A.2)$, $\nabla f^T \nabla f \in  L^{(1)}(\R^d,M_{p,p})$ and if $f$ verify $(A.3)$, $\nabla f^T \nabla f \in  L^{(2\alpha_2+1)}(\R^d,M_{p,p})$ (Lemma \ref{lem:croissancepolynomiale}). Then, in any case Lemma \ref{lem:evaluationobservable} ensures that $\mu \mapsto \mu(\nabla f^T \nabla f)$ is Lipschitz bounded on bounded subsets of $\mathcal{P}_\alpha(\R^d)$ (because $2\alpha_2+1\leq \alpha$). We can then find $A_M \geq 0$ such that for all $\mu$ in $\{\mu \in  \mathcal{P}_\alpha(\R^d): m_\alpha(\mu) \leq M \}$, $|\mu(\nabla f^T\nabla f)| \leq A_M$.

    \medskip{}

    We can now define the compact set $G^M:=\{ B \in M_{p,p}(\R^d) : |B| \leq A_M , \quad \text{det}(B) \geq \frac{1}{M} \}$. We consider the map:
    $$ \begin{array}{ccccc}
\text{Inv}^M& : & G^M & \longrightarrow & M_{p,p}(\R^d) \\
 & & B & \longmapsto &\displaystyle \frac{1}{\text{det}(B)}\text{com}^T(B) \\
\end{array}$$ 
This map is smooth on the compact set $G^M$ so it is naturally Lipschitz and bounded on it. Then, by composition, the map $\mu \mapsto \text{Inv}^M(\mu(\nabla f^T \nabla f))$ is Lipschitz and bounded on the set:
$$\{\mu \in \mathcal{P}_\alpha(\R^d): \hspace{3pt} m_\alpha(\mu) \leq M ,\hspace{3pt} \text{det}(\mu(\nabla f ^T \nabla f)) \geq \frac{1}{M} \}\,.$$ 

Moreover, we have that $\mu \mapsto \mu(\nabla f^T \nabla f)^{-1}$ coincide with $\mu \mapsto \text{Inv}^M(\mu(\nabla f^T \nabla f))$ on this set. Then, by definition of $\mathcal{D}^{K,M}$, the map $x,\mu \mapsto \mu(\nabla f^T \nabla f)^{-1}$ is Lipschitz bounded on $\mathcal{D}^{K,M}$.

\medskip{}

\textit{c) Study of $s(\mu(f)) $}: Similarly, the assumption on $f$ and the Lemma \ref{lem:evaluationobservable} ensure that $\mu \mapsto \mu(f)$ is Lipschitz bounded on bounded subsets of $\mathcal{P}_\alpha(\R^d)$, and so we can find $\tilde{A}_M\geq 0$ such that for all $\mu$ in $\{\mu \in  \mathcal{P}_\alpha(\R^d): m_\alpha(\mu) \leq M \}$, $|\mu(f)| \leq \tilde{A}_M$.

Thanks to the assumption on $s$, with $\mathcal{S}$ the set of singularity points, the restriction of $s$ to the set $$(\mathcal{S}_{\frac{1}{M}})^c \cap B(0,\tilde{A}_M) \subset \R^p$$ is Lipschitz and bounded. By composition, we finally have that $x,\mu \mapsto s(\mu(f))$ is Lipschitz and bounded on 
$$\{ x,\mu \in \R^d \times \mathcal{P}_\alpha(\R^d): \hspace{3pt} m_{\alpha}(\mu) \leq M , \hspace{3pt} d(\mu(f), \mathcal{S})\geq \frac{1}{M} \}$$ Finally, by definition of $\mathcal{D}^{K,M}$, $x,\mu \mapsto s(\mu(f))$ is Lipschitz and bounded on $\mathcal{D}^{K,M}$, which concludes the study of $\sigma$.

\medskip{}

\underline{\textit{2) study of $b$:}} We do the same reasoning than for the study of $\sigma$: we decompose $b$ as a product of functions, and we show that each is Lipschitz bounded on $\mathcal{D}^{K,M}$. We already showed that $x,\mu \mapsto \nabla f (x) \mu(\nabla f^T\nabla f)^{-1}$ is Lipschitz bound on $\mathcal{D}^{K,M}$. By doing the same proof than for $s(\mu(f))$, we can show that $x,\mu \mapsto a(\mu(f))$ is Lipschitz and bounded on $\mathcal{D}^{K,M}$. We only need to study of $x,\mu \mapsto \mu(\sigma_{\mu}\sigma_{\mu}^T :\nabla^2 f) $: in order to simplify the notations we will denote $C$ a positive constant (independent of $x$ and $\mu$), that may change from line to line. 

\medskip{}

First, we develop the expression of $x,\mu \mapsto \mu(\sigma_{\mu}\sigma_{\mu}^T :\nabla^2 f) $, in order to express it as a composition of several maps on $\mathcal{D}^{K,M}$.

\medskip{}

\textit{a) Development of $\mu(\sigma_{\mu}\sigma_{\mu}^T :\nabla^2 f)$:} this map has values in $\R^p$, and for all index $1\leq k \leq p$:
\begin{equation} \label{eq:defLetT}
     \begin{split}
         (\mu(\sigma_{\mu}\sigma_{\mu}^T :\nabla^2 f))_k&=\mu(\sigma_{\mu}\sigma_{\mu}^T :\nabla^2 f_k)\\
         &=\sum_{i,j=1}^d \mu((\sigma_{\mu}\sigma_{\mu}^T)_{i,j} \hspace{3pt} \partial^2_{i,j} f_k) \\
         &= \sum_{i,j=1}^d \int_{\R^d} [\nabla f (x) A(\mu) \nabla f ^T(x))_{i,j} \hspace{3pt} \partial^2_{i,j} f_k(x) \hspace{3pt}] d\mu(x)
     \end{split}
 \end{equation}
where $A:\mathcal{P}_\alpha(\R^d)\rightarrow M_{p,p}$ is defined by:
$$A:\mu \mapsto \mu(\nabla f^T \nabla f)^{-1} s(\mu(f)) s(\mu(f))^T (\mu(\nabla f^T \nabla f)^{-1})^T\,.$$
If we define the set $$\Lambda^M:=\{\mu \in \mathcal{P}_\alpha(\R^d):\hspace{3pt} m_\alpha(\mu) < M , \hspace{3pt} d(\mu(f), \mathcal{S})< \frac{1}{M}, \hspace{3pt} \text{det}(\mu(\nabla f ^T \nabla f)) > \frac{1}{M} \}$$
by definition of $\mathcal{D}^{K,M}$, we naturally have that $$\mathcal{D}^{K,M}=B(0,K) \times \Lambda^M$$ where $B(0,K)\subset \R^d$ is the open centered euclidean ball of radius $K$.

Since $A(\mu)$ is defined as a product of bounded Lipschitz maps on $\Lambda^M$, it is also Lipschitz and bounded on this set, and so the map

\begin{equation} \label{eq:defT}
    T:\mu \in \mathcal{P}_\alpha(\R^d) \rightarrow M_{p,p} \times \mathcal{P}_\alpha(\R^d), \quad T:\mu \mapsto (A(\mu),\mu)
\end{equation}

is also Lipschitz and bounded on $\Lambda^M$. Let $V_M$ be the maximum value of $A$ on $\Lambda_M$, so that $A$ is bounded by $V_M$ on this set.

In order to clarify the expression of $(\mu(\sigma_{\mu}\sigma_{\mu}^T :\nabla^2 f))_k$, we define the map:

$$L^M_k:\{B \in M_{p,p}(\R), \hspace{3pt} |B| \leq V_M \} \times \Lambda^M \rightarrow \R$$
by
\begin{equation} \label{eq:defL}
    L^M_k(B,\mu)= \sum_{i,j=1}^d \int_{\R^d} [\nabla f (x) B \nabla f ^T(x))_{i,j} \hspace{3pt} \partial^2_{i,j} f_k(x) \hspace{3pt}] d\mu(x)
\end{equation}

We naturally denote $L^M(B,\mu):=(L^M_k(B,\mu))_{k=1}^p$, and thanks to Equations~\eqref{eq:defLetT}, \eqref{eq:defT} and \eqref{eq:defL}, we have the following equality for all $\mu \in \Lambda^M$:

\begin{equation} \label{eq:compositionLT}
    \mu(\sigma_{\mu}\sigma_{\mu}^T :\nabla^2 f)=L(A(\mu),\mu)=L\circ T(\mu)
\end{equation}

As we already showed that $T$ is Lispchitz bounded on $\Lambda^M$, we have to study $L$ to prove the desired result. We will prove that for all index $1\leq k \leq p$, we have that $L_k^M$ is Lipschitz and bounded, which will conclude the overall proof.

\medskip{}

\textit{b) Study of $L_k^M$}: we now want to show that $L_k^M$ is Lipschitz bounded on the set $\{B \in M_{p,p}(\R), \hspace{3pt} |B| \leq V_M \} \times \Lambda^M$. To do so, let $\mu_1,\mu_2 \in \Lambda^M$ and $B_1,B_2 \in \{B \in M_{p,p}(\R), \hspace{3pt} |B| \leq V_M \}$ et $k \in \{1,..,p\}$. We have that:
\begin{equation} \label{eq:deuxtermes}
    \begin{split}
        |L_k^M(B_1,\mu_1)-L_k^M(B_2,\mu_2)|  \leq & |L_k^M(B_1,\mu_1)-L_k^M(B_1,\mu_2)|+ |L_k^M(B_1,\mu_2)-L_k^M(B_2,\mu_2)| \\
        \leq & \sum_{i,j=1}^d|  \hspace{3pt} \mu_1((\nabla f B_1 \nabla f ^T)_{i,j} \hspace{3pt} \partial^2_{i,j} f_k )- \mu_2((\nabla f B_1 \nabla f ^T)_{i,j} \hspace{3pt} \partial^2_{i,j} f_k ) \hspace{3pt} | \\
        & + \sum_{i,j=1}^d \mu_2( \hspace{2pt} |( \hspace{2pt} \nabla f (B_1- B_2)\nabla f ^T)_{i,j} \hspace{3pt} \partial^2_{i,j} f_k  \hspace{2pt} | \hspace{2pt})
    \end{split}
\end{equation}
We deal with each of these two terms independently.

\medskip{}

\textit{i)First term of equation \eqref{eq:deuxtermes}}: Because $|B_1|\leq V_M$, Lemma \ref{lem:croissancepolynomiale} ensures that for all index $i$ and $j$ the observable $x \mapsto (\nabla  f(x) B_1 \nabla f (x)^T)_{i,j} \hspace{3pt} \partial^2_{i,j} f_k(x) \in L^{(\alpha)}(\R^d,\R)$, with a constant independent of $B_1$ (because $B_1$ is bounded). Then Lemma \ref{lem:evaluationobservable} justify that we can find a constant $C$ independent of $B_1$ such that for all index $i$ and $j$:
$$|  \hspace{3pt} \mu_1((\nabla f B_1 \nabla f ^T)_{i,j} \hspace{3pt} \partial^2_{i,j} f_k )- \mu_2((\nabla f B_1 \nabla f ^T)_{i,j} \hspace{3pt} \partial^2_{i,j} f_k ) \hspace{3pt} | \leq C \, W_\alpha(\mu_1,\mu_2)$$
and thus, (without changing the name of the constant for ease of notations):
\begin{equation} \label{eq:terme1}
    \sum_{i,j=1}^d|  \hspace{3pt} \mu_1((\nabla f B_1 \nabla f ^T)_{i,j} \hspace{3pt} \partial^2_{i,j} f_k )- \mu_2((\nabla f B_1 \nabla f ^T)_{i,j} \hspace{3pt} \partial^2_{i,j} f_k ) \hspace{3pt} | \leq C \, W_\alpha(\mu_1,\mu_2)\,,
\end{equation}
which corresponds to the desired upper bound on the first term of Equation \eqref{eq:deuxtermes}.

\medskip{}

\textit{ii) Second term of equation \ref{eq:deuxtermes}}: For $i$ and $j$ two indices, because of the equivalence of the norms in finite dimensions, we can find a multiplicative constant $C$ so that:
\begin{equation*}
             \begin{split}
                 \mu_2( \hspace{2pt} | \hspace{2pt} \nabla f (B_1- B_2)\nabla f ^T)_{i,j} \hspace{3pt} \partial^2_{i,j} f_k  \hspace{2pt} |) &\leq  \mu_2( C \, |\nabla f | |B_1-B_2| |\nabla f ^T| |\nabla^2 f_k|) \\
                 & \leq C \, |B_1-B_2| \, \mu_2( |\nabla f | |\nabla f ^T| |\nabla^2 f_k|)\,.
             \end{split}
\end{equation*}
Then, because of the hypothesis on $f$ and its derivatives, we can find a constant such that $\mu_2( |\nabla f | |\nabla f ^T| |\nabla^2 f_k|) \leq C(1+m_\alpha(\mu_2)) $ and because $\mu_2\in \Lambda^M$ this term is uniformly bounded in $\mu_2$. As a consequence,
$$\mu_2( \hspace{2pt} | \hspace{2pt} \nabla f (B_1- B_2)\nabla f ^T)_{i,j} \hspace{3pt} \partial^2_{i,j} f_k  \hspace{2pt} |) \leq C \, |B_1-B_2|$$
Next, for all index $k$, $L_k^M$ is Lipschitz bounded on the set $\{B \in M_{p,p}(\R), \hspace{3pt} |B| \leq V_M \} \times \Lambda^M$, and so by Equation \eqref{eq:compositionLT}, $\mu \mapsto \mu(\sigma_{\mu}\sigma_{\mu}^T :\nabla^2 f)$ is Lipschitz bounded on $\Lambda^M$. Finally, because $b$ is a product of Lipschitz bounded functions on $D^{K,M}$,  it is itself a Lipschitz bounded function on $\mathcal{D}^{K,M}$, which concludes the proof.

\end{proof}

\subsubsection{Proof of Lemma \ref{lem:troncatures}} \label{app:prooftroncature}

In order to prove the result, we first present the function $\chi^{K,M} $ that will play the role of the indicator function and then we show that the truncated coefficients are indeed Lipschitz and bounded on the whole space $\R^d\times \mathcal{P}_\alpha(\R^d)$.

\begin{proof}

First, we define for any $r>0$, the following functions defined from $\R^+$ to $[0,1]$, that correspond to Lipschitz approximations of respectively $\1_{[0,r]}$ and $\1_{[\frac{1}{r},+\infty[}$: 
    $$H_r(u):=\Biggl\{
        \begin{array}{llll}
        \displaystyle
        1 \quad \text{if $u\leq r$} \\
        r+1-u \quad \text{si $r<u< r$}\\
        0 \quad \text{si  $u\geq r+1$}
        \end{array}
        \Biggr.$$
    and 
     $$I_r(u):=\Biggl\{
        \begin{array}{llll}
        \displaystyle
        0 \quad \text{if $u\leq \frac{1}{r+1}$} \\
        r(r+1)u-r \quad \text{si $\frac{1}{r+1}<u<\frac{1}{r}$}\\
        1 \quad \text{si  $u\geq \frac{1}{r}$}
        \end{array}
        \Biggr.$$

   We now define $\chi^{K,M}:\R^d \times \mathcal{P}_\alpha(\R^d)\rightarrow [0,1]$ by:
    \begin{equation}
        \label{eq:def_truncation}
    \chi^{M,K}:x , \mu \mapsto H_K(|x|) \hspace{2pt} \cdot \hspace{2pt}  H_M(m_\alpha(\mu)) \hspace{2pt} \cdot \hspace{2pt}  I_M(d(\mu(f),\mathcal{S}))\hspace{2pt} \cdot \hspace{2pt} I_M(\text{det}(\mu(\nabla f ^T \nabla f))) 
    \end{equation}

    By construction, $\chi^{M,K}$ is bounded, equal to $1$ on $\mathcal{D}^{K,M}$, equal to $0$ on $(\mathcal{D}^{K+1,M+1})^c$, and Lipschitz on $\overline{\mathcal{D}^{K+1,M+1}}$. It follows that the truncates versions of the coefficients $b,\tilde{b},\sigma,\tilde{\sigma}$ defined on $\R^d \times \mathcal{P}_\alpha(\R^d)$ by:
        \begin{equation} \label{eq:truncated}
        b^{M,K}(x,\mu)=\chi^{M,K}(x,\mu)b_\mu(x)\quad \sigma^{M,K}(x,\mu) =\chi^{M,K}(x,\mu)\sigma_\mu(x)
        \end{equation}
         (same definition for $\tilde{b}^{K,M}$ and $\tilde{\sigma}^{K,M}$) are Lipschitz on $\overline{\mathcal{D}^{K+1,M+1}}$ and equals to $0$ outside of $\mathcal{D}^{K+1,M+1}$ and coincide with $b$ and $\sigma$ on $\mathcal{D}^{K,M}$. We are going to show that the truncation ensures that the coefficients are globally Lipschitz: we show this result for a generic function $F:\R^d \times \mathcal{P}_\alpha(\R^d) \rightarrow \R$ Lipschitz on $\overline{\mathcal{D}^{K+1,M+1}}$ and equals to $0$ outside of $\mathcal{D}^{K+1,M+1}$.
         
         \medskip{}
         
         Let $\mu_0,\mu_1 \in \mathcal{P}_\alpha(\R^d) $ and $x_0,x_1 \in \R^d$, we want to show that we can find a constant $C$ so that
        \begin{equation}\label{eq:distinctioncasdomaine}
            |F(x_0,\mu_0)-F(x_1,\mu_1)|\leq C \Bigl[ |x_0,x_1|+\mathcal W_\alpha(\mu_0,\mu_1) \Bigr]
        \end{equation}
        
        We show that this result holds with $C$ the Lipschitz constant of $F$ on $\mathcal{D}^{K+1,M+1}$ 

        \medskip{}

        If $x_0,\mu_0$ and $x_1,\mu_1$ are both inside or outside of $\mathcal{D}^{K+1,M+1}$, equation \ref{eq:distinctioncasdomaine} is directly verified.

        Suppose that  $x_1,\mu_1 \in \mathcal{D}^{K+1,M+1}$ and $x_0,\mu_0 \notin \mathcal{D}^{K+1,M+1}$ (the symetric case can be proven the same way). If $F(x_1,\mu_1)= 0 $, the inequality is obviously true. Otherwise $F(x_1,\mu_1)\neq  0 $,  and for $t\in[0,1]$, let $x_t:=(1-t)x_0+tx_1$ and $(\mu_t)$ be a geodesic from $\mu_0$ to $\mu_1$ in $\mathcal{P}_\alpha(\R^d)$. The classical theory of optimal transport ensures that such a geodesic always exists and verify the following equality:
            \begin{equation} \label{eq:geodesique}
                \mathcal W_\alpha(\mu_1,\mu_t)=(1-t)\mathcal W_\alpha(\mu_0,\mu_1)
            \end{equation}
            Moreover, the hypothesis on $F$ ensures that the map $$t\in [0,1] \mapsto F(x_t,\mu_t)$$ is continuous, equal to zero for $t=0$ and non-zero for $t=1$. Let $t_0=\text{inf}\{0\leq t \leq 1: \, F(x_t,\mu_t) \neq 0 \}$ ($t_0$ exists because $F(x_1,\mu_1)\neq  0$ ). By continuity and because $\overline{\mathcal{D}^{K+1,M+1}}$ is closed, $(x_{t_0},\mu_{t_0}) \in \overline{\mathcal{D}^{K+1,M+1}}$ and $F(x_{t_0},\mu_{t_0})=0$. Then:
            \begin{equation*}
                \begin{split}
                    |F(x_0,\mu_0)-F(x_1,\mu_1)|= |F(x_{t_0},\mu_{t_0})-F(x_1,\mu_1)|
                \end{split}
            \end{equation*}
            Then, because $F$ is Lipschitz in $\overline{\mathcal{D}^{K+1,M+1}}$ and because of Equation \eqref{eq:geodesique}:
            \begin{equation*}
                \begin{split}
                    |F(x_{t_0},\mu_{t_0})-F(x_1,\mu_1)| & \leq C \Bigl[ |x_{t_0},x_1|+\mathcal W_\alpha(\mu_{t_0},\mu_1) \Bigr] \\
                    & \leq C (1-t) \,\Bigl[  |x_{0},x_1|+\mathcal W_\alpha(\mu_{0},\mu_1) \Bigr] \\
                    & \leq C\Bigl[  |x_{0},x_1|+\mathcal W_\alpha(\mu_{0},\mu_1) \Bigr] 
                \end{split}
            \end{equation*}
            which implies that $$|F(x_0,\mu_0)-F(x_1,\mu_1)| \leq C\Bigl[  |x_{0},x_1|+\mathcal W_\alpha(\mu_{0},\mu_1) \Bigr]$$
            and this give the desired result.

\end{proof}

\subsection{Proof of Lemma \ref{lem:boundmoments}} \label{app:proofmomentbounds}

\begin{proof}
    First, suppose that $\tilde{b} $ satisfy assumption (B.2'), with parameter $q\geq 2$, let $\gamma \geq 2$ and $\phi:x \in \R^d \mapsto |x|^{\gamma}$. Applying Ito's formula to $\phi(X^{K,M}_{t\wedge \tau^{K,M} \wedge \gamma^{K,M} })$, because the truncated coefficients coincide with the original coefficients on $\mathcal{D}^{K,M}$ we get that:
    \begin{equation} \label{eq:phiX}
        \phi(X^{K,M}_{t\wedge \tau^{K,M} \wedge \gamma^{K,M} })= \phi(X_0)+ \int_0^{t\wedge \tau^{K,M} \wedge \gamma^{K,M} } L_{\mu^{K,M}_s}\phi(X^{K,M}_s)ds+M_t
    \end{equation}
    where $M_t$ is a local martingale and for all $x,\mu \in \mathcal{D}^{K,M}$, and $L_\mu\phi(x)=\hat{L}_\mu\phi(x)+\tilde{L}_\mu\phi(x)$ is the generator of the process, with:
    \begin{equation} 
     \left\{
        \begin{array}{lll}
        \hat{L}_\mu\phi(x)&=\nabla \phi(x) \cdot b_\mu(x)+ \frac{1}{2}\nabla^2 \phi(x) : \sigma_\mu(x)\sigma_\mu(x)^T\\
    
        \tilde{L}_\mu\phi(x)&=\nabla \phi(x) \cdot \tilde{b}_\mu(x) + \frac{1}{2}\nabla^2 \phi(x) : \tilde{\sigma}_\mu(x)\tilde{\sigma}_\mu(x)^T
        \end{array}\right.
\end{equation}

Because we assumed that $\nabla f(x)$ has at most a polynomial growth of order $\beta$ and by definition of $b$ and $\sigma$, for $x,\mu \in \mathcal{D}^{K,M}$ it exist a constant $C_M$ only depending on $M$ such that: $|b_\mu(x)|+|\sigma_\mu(x)| \leq C_M (1+|x|^\beta)$. Then:
\begin{equation} \label{eq:momentL1}
\begin{split}
    \hat{L}_\mu\phi(x) &\leq C_M(1+|x|^{\beta+\gamma-1}+|x|^{2\beta+\gamma-2}) \\
\end{split}
\end{equation}

Moreover, because $\tilde{b}$ is assumed to be coercive (Assumption (B.2')) and because $\tilde{\sigma}$ has at most a polynomial growth of order $\beta$:
\begin{equation*}
\begin{split}
    \tilde{L}_\mu\phi(x) &\leq C_M(1-c_M|x|^{q+\gamma-2}+|x|^{2\beta+\gamma-2}) \\
    &\leq C_M(1-c_M|x|^{q+\gamma-2})\,,
\end{split}
\end{equation*}
which implies that: $$L_\mu \phi(x) \leq C_M(1-c_m|x|^{q+\gamma-2}) \leq C_M$$ because $q \geq \max(\beta+1, 2\beta) $ and $C_M, c_M> 0$ are changing from line to line but only depend on $M$.

Then, if we introduce a sequence of stopping times $(T^N)_{N \in \N}$ reducing the local martingale $(M_t)$, we get from equation \eqref{eq:phiX} that:
\begin{equation} \label{eq:majunif}
    \mathbb{E}\big(|X^{K,M}_{t\wedge \tau^{K,M} \wedge \gamma^{K,M} \wedge T^N}|^\gamma\big) \leq \mathbb{E}(|X_0|^\gamma)+ C_MT
\end{equation}
The application of Fatou's lemma (in order to simplify $T^N$) justifies that for all $\gamma \geq 2$, we can bound for all $\mathbb{E}\big(|X^{K,M}_{t\wedge \tau^{K,M} \wedge \gamma^{K,M} \wedge T^N}|^\gamma\big)$ uniformly in $K>0$ and $t\in [0,t]$ .

In order to get the desired estimate, we can now use the explicit expression of the local martingale $(M_t)$ and Burkholder-Davis-Gundy's inequality to get:
\begin{equation*}
    \begin{split}
        \mathbb{E}({\underset{t\in [0,T]}{\text{sup}}|X_{t\wedge \tau^{K,M} \wedge \gamma^{K,M}}^{K,M}|^\gamma}) & \leq C \mathbb{E}\Bigl[(|X_0|^\gamma+ \int_0^{T\wedge \tau^{K,M} \wedge \gamma^{K,M}\wedge t} [ |b_{\mu^{K,M}_s}(X^{K,M}_s)|^\gamma+|\tilde{b}_{\mu^{K,M}_s}(X^{K,M}_s)|^\gamma]ds\\
        &+\int_0^{T\wedge \tau^{K,M} \wedge \gamma^{K,M}} |\tilde{\sigma}_{\mu^{K,M}_s} (X^{K,M}_s)|^\gamma ds + \int_0^{T\wedge \tau^{K,M} \wedge \gamma^{K,M}} |\sigma_{\mu^{K,M}_s}(X^{K,M}_s)|^\gamma ds\Bigr]\\
    \end{split}
\end{equation*}
Then, since all coefficients have at most a polynomial growth and because $X_0$ has finite moments of every order, we can find $\gamma'\geq 2$ such that:
\begin{equation*}
    \begin{split}
        \mathbb{E}({\underset{t\in [0,T]}{\text{sup}}|X_{t\wedge \tau^{K,M} \wedge \gamma^{K,M}}^{K,M}|^\gamma}) & \leq C \po 1+ \int_0^T \mathbb{E} ( |X^{K,M}_{t \wedge \tau^{K,M} \wedge \gamma^{K,M}})|^{\gamma'}) dt \pf\\
    \end{split}
\end{equation*}
The previous estimate (equation \eqref{eq:majunif}) then justifies that the left term of the equation is bounded independently of $K$, which proves the result.
\medskip{}

In order to prove the second estimate of the lemma, because the result follows from the previous estimate if $\tilde{b}$ satisfies assumption (B.2'), we can assume that $\tilde{b}$ satisfy (B.2). We make use of Burkholder-Davis-Gundy inequality to get:

\begin{equation*}
    \begin{split}
       \mathbb{E}({\underset{t\in [0,T]}{\text{sup}}|X_{t\wedge \tau^{K,M} \wedge \gamma^{K,M}}^{K,M}|^2}) & \leq C \mathbb{E}\Bigl[(|X_0|^2+ \int_0^{T\wedge \tau^{K,M} \wedge \gamma^{K,M}\wedge t} [ |b_{\mu^{K,M}_s}(X^{K,M}_s)|^2+|\tilde{b}_{\mu^{K,M}_s}(X^{K,M}_s)|^2]ds\\
       &+\int_0^{T\wedge \tau^{K,M} \wedge \gamma^{K,M}} |\tilde{\sigma}_{\mu^{K,M}_s} (X^{K,M}_s)|^2 ds + \int_0^{T\wedge \tau^{K,M} \wedge \gamma^{K,M}} |\sigma_{\mu^{K,M}_s}(X^{K,M}_s)|^2ds\Bigr]\\
       &\leq C  \Big(1+\mathbb{E}\Bigl[ \int_0^{T\wedge \tau^{K,M} \wedge \gamma^{K,M}} |X^{K,M}_s|^{2\beta} +|X^{K,M}_s|^{\alpha} ds\Big] \Big)\\
       &\leq C  \Big(1+\mathbb{E}\Bigl[ \int_0^{T\wedge \tau^{K,M} \wedge \gamma^{K,M}} |X^{K,M}_s|^{\alpha}ds  \Big] \Big)\\
    \end{split}
\end{equation*}

Where $C$ is independent of $K$ and may change from line to line. The last inequality is a consequence of Jensen's inequality because $\alpha\geq 2\beta$.

Using that
\begin{equation} \label{eq:controlemoment1}
    \begin{split}
        \mathbb{E}[ \int_0^{T\wedge \tau^{K,M} \wedge \gamma^{K,M}} |X^{K,M}_s|^\alpha ds ] &\leq \mathbb{E}[ \int_0^{T} \1_{s\leq \tau^{K,M}}  |X^{K,M}_s|^\alpha ds] \\
        &=\mathbb{E}\Bigl[ \mathbb{E}\bigl[\int_0^T \1_{s\leq \tau^{K,M}}  |X^{K,M}_s|^\alpha  ds \big| \mathcal{F}^0 \bigr] \Bigr]\,,
    \end{split}
\end{equation}
by the conditional Fubini-Tonelli theorem, and because $\1_{s\leq \tau^{K,M}} $ is $\mathcal{F}^0$ measurable,
\begin{equation}  \label{eq:controlemoment2}
    \begin{split}
        \mathbb{E}\bigl[\int_0^T \1_{s\leq \tau^{K,M}}  |X^{K,M}_s|^\alpha ds \big| \mathcal{F}^0 \bigr] &= \int_0^T \1_{s\leq \tau^{K,M}}  \mathbb{E} \bigl[|X^{K,M}_s|^\alpha  \big| \mathcal{F}^0]ds \\
        &= \int_0^{T\wedge \tau^{K,M}} m_\alpha(\mu_s^{K,M})ds \\
        & \leq C_M
    \end{split}
\end{equation}
Since  $m_\alpha(\mu^{K,M}_s)$ is uniformly bounded by $M$ until $\tau^{K,M}$ (and independently of $K$). Again, this concludes.

\end{proof}

\subsection{Proof of Lemma \ref{lem:lastlemma}} \label{app:prooflastlemma}

\begin{proof}
    We first show that the processes $(X_{t\wedge \tau^M})_{t\in[0,T]}$ and $(\mu_{t\wedge \tau^M})_{t\in[0,T]}$ are well defined. This will be a consequence of the previous Lemma and of the construction of the processes $X^{K,M}$ and $\mu^{K,M}$.

    Let $M \in \N^*$ be fixed and $m \in \N$. Because of the previous lemma, the sequence $(\tau^{K,M}\wedge \gamma^{K,M})_{K\in \N^*}$ is a.s constant after the (random) index $\mathcal{K}_M$ and stabilizes to $ \tau^{\mathcal{K}_M,M}$. Combining this result with Equation \eqref{eq:unicitejusqueTA}, and \eqref{eq:comparaisonmu}, we get that a.s for all $k,m\in \N$:
    $$(\mu_{t\wedge \tau^{\mathcal{K}_M+k,M}\wedge \gamma^{\mathcal{K}_M+k,M}}^{\mathcal{K}_M+k,M+m})_{t\in[0,T]}=(\mu_{t\wedge \tau^{\mathcal{K}_M,M} \wedge \gamma^{\mathcal{K}_M,M}}^{\mathcal{K}_M,M})_{t\in[0,T]}=(\mu_{t\wedge \tau^{\mathcal{K}_M,M}}^{\mathcal{K}_M,M})_{t\in[0,T]}=(\mu_{t\wedge \tau^{M}}^{\mathcal{K}_M,M})_{t\in[0,T]},,$$
    so a.s the sequence of processes $(\mu_{\cdot\wedge \tau^{K,M}\wedge \gamma^{K,M}}^{K,M+m})_{K\in \N^*}$ is constant after a certain rank, and its limit is a process that does not depend on the value of $m$. We can define $$\mu_{\cdot\wedge\tau^{M}}^{M+m}= \lim_{K\rightarrow \infty} \mu_{\cdot\wedge \tau^{K,M}\wedge \gamma^{K,M}}^{K,M+m} $$ and conclude that $\mu_{\cdot\wedge\tau^{M}}^{M+m}=\mu_{\cdot\wedge\tau^{M}}^{M}$. We can finally use the notation $\mu_{\cdot \wedge \tau^{M}}=\mu_{\cdot\wedge\tau^{M}}^{M}$ because its definition depends on $M$ only through the stopping time $\tau^M$ (otherwise there would be a dependency in $m$). Finally, $\mu_{\cdot\wedge \tau^M}$ is well defined, and we can use the exact same reasoning to justify that $X_{\cdot\wedge\tau^M}$ is also well defined.

    By definition of $\tau^M$, we have that:
\begin{equation*}
    \begin{split}
        \tau^{M}&=\tau^{\mathcal{K}_M,M}\\
        &=  \inf \{t\in [0,T], \,  \mu_t^{\mathcal{K}_M,M} \in (\Lambda^M)^c\} \\
        &=\inf \{t\in [0,T], \,  \mu_t\in (\Lambda^M)^c\} \\
        &=\inf\{t\in [0,T], \, \mu_t(|x|^\alpha ) \geq M , \quad d(\mu_t(f), \mathcal{S})\leq \frac{1}{M}, \quad \det(\mu_t(\nabla f ^T \nabla f))\leq \frac{1}{M} \}
    \end{split}
\end{equation*}
Hence, the blowing time $\tau=\lim_{M\rightarrow \infty}\tau^M$ is well defined and corresponds to the definition given in the statement of Proposition \ref{prop:stochasticmomentswelldefined}.

\medskip{}

    We are now going to show that the process $(X_{t}, \mu_{t})_{t\in [0,T]}$ is the unique solution of of the conditional McKean Vlasov equation with coefficients $b,\tilde{b},\sigma,\tilde{\sigma}$ up to the blowing time $\tau$.

    We first need to check that the processes are adapted, continuous and that $\mu$ is a continuous version of the conditional laws of $X$. By definition, $\mu_{\cdot\wedge \tau^M}$, (resp. $X_{\cdot\wedge \tau^M}$) is defined as a limit of sequence of $\mathbb{F}^{W^0}$ (resp. $\mathbb{F}^{W_0,W,W^0}$) adapted processes, so is also adapted to this filtration. Moreover, because $X_{\cdot \wedge \tau^M}$ and $\mu_{\cdot \wedge \tau^M}$ are limits of sequences of continuous processes that are a.s stationary after a certain rank, they are also a.s continuous.

    The process $\mu_{\cdot \wedge \tau^{M}}$ is also a version of the flow of conditional law of $X_{\cdot\wedge \tau^{M}}$ with respect to $\mathcal{F}^0$. Indeed, for $h$ a continuous and bounded function on $\R^d$, for all $0\leq t \leq T$ and $K>0$, the conditional Fubini Theorem ensures that:
    $$\mathbb{E}\bigl(h(X^{K,M}_{t\wedge \tau^{K,M}}) \big| \mathcal{F}^0 \bigr)=\mu^{K,M}_{t\wedge \tau^{K,M}}(h)$$
    By the conditional dominated convergence theorem, the left side of the equation $\mathbb{P}^0$ a.s converges when $K$ goes to infinity to $\mathbb{E}\bigl(h(X_{t\wedge \tau^{M}}) \big| \mathcal{F}^0 \bigr)$. We also know that $(\mu_{t\wedge \tau^{M}})_{0\leq t \leq T}$ is $\mathbb{P}^0$ a.s continuous in $\mathcal{P}_\alpha(\R^d)$ and because the convergence for the Wasserstein distance imply the convergence in law:
    $$\mathbb{P}^0 \text{a.s,  } \quad \mu^{K,M}_{t\wedge \tau^{K,M}}(h) \underset{K\rightarrow \infty }{\longrightarrow} \mu_{t\wedge \tau^{M}}(h)$$
    Then, $\mathbb{P}^0$ a.s:$$ \mathbb{E}\bigl(h(X_{t\wedge \tau^{M}}) \big| \mathcal{F}^0 \bigr)=\mu_{t\wedge \tau^{M}}(h)$$ and we have that $(\mu_{t\wedge \tau^{M}})_{0\leq t \leq T}$ is a (continuous) version of the flow of conditional law of  $(X_{t\wedge \tau^{M}})$ with respect to $\mathcal{F}^0$.

    The justification that:
    $$\mathbb{E}[\hspace{2pt} \int_0^{T\wedge \tau^M} (|b(X_t,\mu_t)|+ |\tilde{b}(X_t,\mu_t)|+|\sigma(X_t,\mu_t)|^2+|\tilde{\sigma}(X_t,\mu_t)|^2)dt ] < \infty$$
    directly follows from the polynomial growth of the coefficients and of lemma \ref{lem:boundmoments}.
    
    To verify that the last condition is verified, we note 
    \begin{equation*}
        \begin{split}
            A^M:= \Bigl\{ \, \forall 0\leq t \leq T, \quad X_{t\wedge \tau^M}&=X_0+\int_0^{t\wedge \tau^M}b(X_s,\mu_s)+\tilde{b}(X_s,\mu_s)ds\\
            &+\int_0^{t\wedge \tau^M}\sigma(X_s,\mu_s)dW_s+\int_0^{t\wedge \tau^M}\sigma^0(X_s,\mu_s)dW^0_s \Bigr\}
        \end{split}
    \end{equation*}
    and for all $K \in \N^*$: 
    \begin{equation*}
        \begin{split}
            A_K^M:= \Bigl\{ \, \forall 0\leq t \leq T, \quad &X_{t\wedge \tau^{K,M}\wedge \gamma^{K,M}}=X_0+\int_0^{t\wedge \tau^{K,M}\wedge \gamma^{K,M}}b(X^{K,M}_s,\mu_s)+\tilde{b}(X^{K,M}_s,\mu_s)ds\\
            &+\int_0^{t\wedge \tau^{K,M}\wedge \gamma^{K,M}}\sigma(X^{K,M}_s,\mu_s)dW_s+\int_0^{t\wedge \tau^{K,M}\wedge \gamma^{K,M}}\sigma^0(X^{K,M}_s,\mu_s)dW^0_s \Bigr\}
        \end{split}
    \end{equation*}

    Since $(X^{K,M},\mu^{K,M})$ and $(X,\mu)$ coincide until $T\wedge \tau^{K,M}\wedge \gamma^{K,M}$, and because of Lemma~\ref{lem:relationsT.A}, we have that:
    $$A^M=\underset{K \in \N^*}{\bigcap}A_K^M$$
    Then, because for all $K \in \N^*$ $\mathbb{P}(A_K^M)=1$ by definition of $X^{K,M},\mu^{K,M}$, we then have that: $$\mathbb{P}(A^M)=1\,.$$ 
    This finishes to prove that $X,\mu$ is a solution to the conditional McKean Vlasov equation until the blowing time $\tau$.

    \medskip{}

    We finally prove that $X,\mu$ is the unique solution to the conditional McKean Vlasov equation until the blowing time $\tau$. Let $\tilde{X},\tilde{\mu}$ be another solution reduced by the sequence of stopping times $(\tilde{\tau}^M)_{M\in \N^*}$. 
    In the same spirit than $\tau^{K,M}$ and $\gamma^{K,M}$, we define the following stopping times:
    \begin{align*}
    \tilde{\gamma}^{K,M}&= \text{inf}\{0 \leq  t \leq T, \quad |\tilde{X}_t| \geq  K\}\\
    \tilde{\tau}^{K,M}&= \text{inf}\{0 \leq  t \leq T, \quad \tilde{\mu}_t \notin \Lambda^M\}
\end{align*}

Then, the processes $(X_t,\mu_t)$ and $(\tilde{X}_t,\tilde{\mu})$ are both solutions until $\tilde{\tau}^M \wedge \tau^{K,M}\wedge\gamma^{K,M}\wedge\tilde{\tau}^{K,M}\wedge\tilde{\gamma}^{K,M} $ of the conditional McKean Vlasov equation \eqref{eq:condMVwithKandM} directed by the coefficients $b^{K,M},\tilde{b}^{K,M},\sigma^{K,M},\tilde{\sigma}^{K,M}$. By strong unicity of this equation until a stopping time, the processes $(X_t,\mu_t)$ and $(\tilde{X}_t,\tilde{\mu})$ coincide until $\tau^{K,M}\wedge\gamma^{K,M}\wedge\tilde{\tau}^{K,M}\wedge\tilde{\gamma}^{K,M}$. A direct consequence is that:
$$\mathbb{E}\Bigl( \, \underset{0\leq t \leq T}{\text{sup}}|X_{t\wedge\tau^{K,M}\wedge\gamma^{K,M}\wedge\tilde{\tau}^{M}}- \tilde{X}_{t\wedge\tau^{K,M}\wedge\gamma^{K,M}\wedge\tilde{\tau}^{M}}| \Bigr)=0$$
Then, thanks to Lemma \ref{lem:relationsT.A}:
\begin{equation*}
    \underset{0\leq t \leq T}{\text{sup}}|X_{t\wedge\tau^{M}\wedge\tilde{\tau}^{M}}- \tilde{X}_{t\wedge\tau^{M}\wedge\tilde{\tau}^{M}}|= \underset{K\rightarrow \infty}{\text{lim}} \, \underset{0\leq t \leq T}{\text{sup}}|X_{t\wedge\tau^{K,M}\wedge\gamma^{K,M}\wedge\tilde{\tau}^{M}}- \tilde{X}_{t\wedge\tau^{K,M}\wedge\gamma^{K,M}\wedge\tilde{\tau}^{M}}| 
\end{equation*}
By Fatou's Lemma:
\begin{equation*}
\begin{split}
    \mathbb{E}\Bigl( \, \underset{0\leq t \leq T}{\text{sup}}|X_{t\wedge\tau^{M}\wedge\tilde{\tau}^{M}}- \tilde{X}_{t\wedge\tau^{M}\wedge\tilde{\tau}^{M}}| \Bigr)&\leq  \underset{K\rightarrow \infty}{\text{liminf}} \, \mathbb{E}\Bigl( \, \underset{0\leq t \leq T}{\text{sup}}|X_{t\wedge\tau^{K,M}\wedge\gamma^{K,M}\wedge\tilde{\tau}^{M}}- \tilde{X}_{t\wedge\tau^{K,M}\wedge\gamma^{K,M}\wedge\tilde{\tau}^{M}}| \Bigr) \\
    &\leq 0
\end{split}
\end{equation*}
As a consequence, strong uniqueness until the blowing time holds for the conditional McKean Vlasov equation \eqref{eq:stomomentdyn}, which concludes the overall proof.

\end{proof}

\bibliographystyle{plain}
\bibliography{biblio}
\end{document}